\newtheorem{theorem}{Theorem}[section]
\newtheorem{lemma}[theorem]{Lemma}
\newtheorem{proposition}[theorem]{Proposition}
\newtheorem{corollary}[theorem]{Corollary}
\newtheorem{question}[theorem]{Question}
\newtheorem{definition}[theorem]{Definition}
\theoremstyle{remark}
\newtheorem{rmks}{Remarks}[section]
\newtheorem*{Examples}{Examples}
\newtheorem{example}{Example}
\newcommand{\N}{\mathbb{N}}
\renewcommand{\P}{\mathcal{P}}
\newcommand{\Q}{\mathcal{Q}}
\newcommand{\U}{\mathcal{U}}
\newcommand{\MLR}{\mathsf{MLR}}
\newcommand{\dom}{\text{dom}}
\newcommand{\llb}{\llbracket}
\newcommand{\rrb}{\rrbracket}
\newcommand{\fr}{{^{\smallfrown}}}
\definecolor{purple}{rgb}{.9,0.2,.9}
\newcommand{\cs}{2^\omega}
\newcommand{\uh}{{\upharpoonright}}
\renewcommand{\phi}{\varphi}
\newcommand{\str}{2^{<\omega}}
\newcommand{\binrat}{\mathbb{Q}_2}
\newcommand{\halts}{{\downarrow}}
\newcommand{\diverge}{{\uparrow}}
\renewcommand{\tt}{\mathrm{tt}}
\newcommand{\pz}{\Pi^0_1}
\newcommand{\atom}{\mathsf{Atoms}}
\title{The random members of a $\Pi^0_1$ class}
\author{Douglas Cenzer and Christopher P. Porter}
\date{} 
\begin{document}

\maketitle

\begin{abstract}We examine several notions of randomness for elements in a given $\Pi^0_1$ class $\P$. Such an effectively closed subset $\P$ of $\cs$ may be viewed as the set of infinite paths through the tree $T_\P$ of extendible nodes of $\P$, i.e., those finite strings that extend to a member of $\P$, so one approach to defining a random member of $\P$ is to randomly produce a path through $T_\P$ using a sufficiently random oracle for advice. In addition, this notion of randomness for elements of $\P$ may be induced by a map from $\cs$ onto $\P$ that is computable relative to $T_\P$, and the notion even has a characterization in term of Kolmogorov complexity. Another approach is to define a relative measure on $\P$ by conditionalizing the Lebesgue measure on $\P$, which becomes interesting if $\P$ has Lebesgue measure 0. Lastly, one can alternatively define a notion of incompressibility for members of $\P$ in terms of the amount of branching at levels of $T_\P$.  We explore some notions of homogeneity for $\Pi^0_1$ classes, inspired by work of van Lambalgen. A key finding is that in a specific class of sufficiently homogeneous $\Pi^0_1$ classes $\P$, each of these approaches coincides.  We conclude with a discussion of random members of $\Pi^0_1$ classes of positive measure.

\end{abstract}

\section{Introduction}
\label{intro}

The theory of algorithmic randomness for $2^\omega$\!, the collection of infinite binary sequences, has been actively studied over roughly the last fifteen years.  
During this time, there has been some work in considering algorithmic randomness in more general settings. Random closed subsets of $2^{\omega}$  were introduced by Barmpalias, Brodhead, Cenzer et al in \cite{BCDW07}, and also studied by Axon \cite{Axo15}, Diamondstone and Kjos-Hanssen \cite{DK09,DK12}, and others.  Random continuous functions were introduced by Barmpalias, Cenzer et al in \cite{BCRW09} and have recently been studied by Culver and Porter \cite{CP16}.  
Here, we consider only a slightly more general setting than $\cs$:  If we replace $\cs$ with some non-empty $\Pi^0_1$ class $\P\subseteq\cs$\!, that is, an effectively closed subset of $\cs$\!, is it reasonable to speak of the algorithmically random member of $\P$?  We are not simply asking here whether $\P$ contains any Martin-L\"of random members.  For according to two standard results in algorithmic randomness, if $\P$ has Lebesgue measure 0, it contains no Martin-L\"of random sequences, while if $\P$ has positive Lebesgue measure, it contains, up to finite modification, \emph{every} Martin-L\"of random sequence.

The approach we take here is to treat $\P$ as a space in its own right.  In particular, we consider three general approaches to defining the random members of $\P$:  
\begin{enumerate}
\item  a local approach, according to which the randomness of some $X\in \P$ is defined in terms of the behavior of initial segments of $X$ in the set of extendible nodes of $\P$, or equivalently, in terms of properties of the relatively clopen sets $\llb X\uh n\rrb\cap \P$ (in the subspace topology), 
\smallskip

\item a global approach, according to which the randomness of some $X\in \P$ is defined in terms of properties of the entire class $\P$; and

\smallskip

\item an intermediate approach, according to which the randomness of some $X\in \P$ is defined in terms of the properties of the relatively clopen sets $\llb 2^n\rrb\cap \P$, that is, clopen sets generated by the collection of strings of length $n$ that extend to an infinite path in $\P$.
\end{enumerate}


The key finding of this study is that in sufficiently homogeneous $\Pi^0_1$ classes, each of these approaches coincide.  We also consider a number of specific examples of $\Pi^0_1$ classes and investigate the properties of their random members.

The remainder of the paper proceeds as follows.  First, in Section \ref{Background} we provide the necessary background for the remainder of the study.  Next, in Section \ref{Local},
 we explore the local approach by formalizing the idea of randomly producing a random path of a fixed $\Pi^0_1$ class.  We also provide a number of equivalent characterizations of this initial definition.  In Section \ref{Global}, we define a global notion of randomness for members of a $\Pi^0_1$ class $\P$ by conditionalizing the Lebesgue measure on $\P$ and compare this to the local approach.   Next, we consider in Section \ref{sec-Intermediate} an intermediate approach first given by van Lambalgen (although he was not explicitly attempting to formalize the notion of random member of a $\Pi^0_1$ class).  This intermediate approach involves the Kolmogorov complexity of the initial segments of  elements of $\P$ in comparison with the number of initial segments of the same length.  This approach is compared and contrasted with the local and global approaches.  In Section \ref{sec-Homogeneous},  we examine various notions of homogeneous classes and show that for a sufficiently homogeneous $\Pi^0_1$ class $\P$, the local, global, and intermediate approaches to defining random members of $\P$ coincide. In Section \ref{sec-Positive}, we examine the randomness of members of classes of positive measure. Conclusions are given and prospects for future work in are discussed in Section \ref{sec-future}.

\section{Background} \label{Background}

We will assume the reader is familiar with the basics of computability theory.  We fix some notation and provide some basic definitions.

\subsection{Notation and preliminary definitions}
$\str$ denotes the collection of finite binary strings, members of which are denoted by lowercase Greek letters such as $\sigma$ and $\tau$.  For $n\in\omega$, $2^n$ denotes the collection of binary strings of length $n$.  Given a finite string $\sigma
\in 2^n$, let $|\sigma| = n$ denote the length of $n$.  For each $\sigma\in\str$ and each $i<|\sigma|$, $\sigma(i)$ denotes the $(i+1)^\mathrm{st}$ bit of $\sigma$. For two strings $\sigma,\tau$, we say that $\tau$ \emph{extends} $\sigma$
and write $\sigma \preceq \tau$ if $|\sigma| \leq |\tau|$ and $\sigma(i)
= \tau(i)$ for $i < |\sigma|$.  We say that $\sigma$ and $\tau$ are \emph{compatible} if either $\sigma \preceq \tau$ or $\tau \preceq \sigma$. 
 Let $\sigma^{\frown} \tau$
denote the concatenation of $\sigma$ and $\tau$; we will often write, for instance, $\sigma0$ and $\sigma1$ instead of $\sigma\fr 0$ and $\sigma\fr 1$.  The empty string will be denoted $\epsilon$.  For $\sigma\in\str$, we define $\sigma^\curvearrowright$ to be the string obtained by flipping the last bit of $\sigma$, so that if $\sigma=\tau\fr i$, then $\sigma^\curvearrowright=\tau\fr(1-i)$.

 $\cs$ denotes the collection of infinite binary
sequences, which we will often identify with subsets of $\omega$, viewing each sequence as a characteristic function. We will write members of $\cs$ as uppercase Roman letters such as $X,Y$ and $Z$.  We will write the complement of $X\in\cs$ as $\overline X$.  For $X \in \cs$, $\sigma \prec X$ means
that $\sigma(i) = X(i)$ for $i < |\sigma|$, where as above, $X(i)$ denotes the $(i+1)^\mathrm{st}$ bit of $x$.  Let $X
\uh n = X(0)\dots X(n-1)$; $\sigma\uh n$ is similarly defined for $\sigma\in\str$ and $n<\sigma$.  
Two sequences $X$ and $Y$ may be coded together into $Z = X \oplus Y$, where 
$Z(2n) = X(n)$ and $Z(2n+1) = Y(n)$ for all $n$.  More generally, given any $W\in\cs$, we can code $X$ and $Y$ together via $W$ using the principal function of $W$, i.e., the function $p_W$ such that $p_W(n)=i$ if and only if $i$ is the $(n+1)^\mathrm{st}$ number such that $W(i)=1$, and that of $\overline{W}$.  Then we define $Z=X\oplus_W Y$ by
\[
Z(m)=
\left\{
	\begin{array}{ll}
		X(n)  & \mbox{if } p_{W}(n)=m \\
		Y(n) & \mbox{if } p_{\overline{W}}(n)=m
	\end{array}.
\right.\]
That is, $X\oplus_W Y$ is obtained by coding $X$ at locations $n$ where $W(n)=1$ and coding $Y$ at locations $n$ where $W(n)=0$.

For a finite string $\sigma$, let $\llb\sigma\rrb$ denote $\{X \in \cs:
 \sigma \prec X \}$. We shall refer to $\llb\sigma\rrb$ as the \emph{interval}
 determined by $\sigma$. Each such interval is a clopen set and the
 clopen sets are just finite unions of intervals. 
A set $T\subseteq\str$ is a tree if it is closed downwards under~$\preceq$.
For $n \in \omega$, let $T \uh n = T \cap \{0,1\}^n$. 
A nonempty closed set $\P$ may be identified with a tree $T_\P
 \subseteq \str$ where $T_\P = \{\sigma: \P \cap \llb\sigma\rrb \neq
 \emptyset\}$  Note that $T_\P$ has no dead ends. That is, if $\sigma
 \in T_\P$, then either $\sigma^{\frown}0 \in T_\P$ or $\sigma^{\frown}1
 \in T_\P$ (or both). Note that $T_\P \uh n = \{X \uh n: X \in \P\}$. 
For an arbitrary tree $T \subseteq \str$, let $[T]$ denote the
set of infinite paths through $T$, i.e., the set of $X$ such that $X\uh n\in T$ for every $n\in\omega$.
It is well known that $\P \subseteq \cs$ is a closed set if and only if
$\P = [T]$ for some tree $T$.  $\P$ is a $\Pi^0_1$ \emph{class}, or an effectively
closed set, if $\P = [T]$ for some computable tree $T$. 
 The complement of a $\Pi^0_1$ class is said to be a \emph{c.e.\ open set}. This notion plays an important role in 
algorithmic randomness and provides a link between the two areas of study: effectively closed sets and algorithmic randomness. 
Note that in general, if $\P$ is a $\pz$ class, then $T_\P$ is a $\Pi^0_1$ set  but $T_\P$ need not be computable.
If $T_\P$ is computable, then $\P$ is said to be a \emph{decidable} $\pz$ class. 
Moreover, a nonempty $\pz$ class need not contain any computable elements, but a nonempty decidable class
certainly contains computable elements, for  example the left- and right-most infinite paths. Given two closed sets $\P$ and $\Q$, we can define the product $\P \otimes \Q = \{X \oplus Y: X \in \P\ \&\ Y \in \Q\}$ and the disjoint union $\P \oplus \Q = \{0 \fr X: X \in \P\} \cup \{ 1 \fr Y: Y \in \Q\}$; if $\P$ and $\Q$ are $\pz$ classes, then $\P \otimes \Q$ and $\P \oplus Q$ are also $\pz$ classes.  
For a detailed development of $\Pi^0_1$ classes, see \cite{CRta,CR98}.

\subsection{Computable measures and Turing functionals}

By Caratheodory's Theorem, a measure $\mu$ on $\cs$ is uniquely determined by specifying the values of $\mu$ on the intervals of $\cs$, where $\mu(\llb\sigma\rrb)=\mu(\llb\sigma0\rrb)+\mu(\llb\sigma1\rrb)$ for every $\sigma\in\str$.  If in addition we require that $\mu(\cs)=1$, then $\mu$ is a \emph{probability measure}. Hereafter, we will write $\mu(\llb\sigma\rrb)$ as $\mu(\sigma)$. 

The \emph{Lebesgue measure}~$\lambda$ is the unique Borel measure such that $\lambda(\sigma)=2^{-|\sigma|}$ for all $\sigma\in\str$.  A measure $\mu$ on $\cs$ is \emph{computable} if $\sigma \mapsto \mu(\sigma)$ is computable as a real-valued function, i.e., if there is a computable function $\tilde \mu:\str\times\N\rightarrow\binrat$  (where $\binrat=\{\frac{m}{2^n}:n,m\in\omega\}$) such that $|\mu(\sigma)-\tilde \mu(\sigma,i)|\leq 2^{-i}$ for every $\sigma\in\str$ and $i\in\omega$.    
This notion can be relativized to any oracle $A$, yielding an $A$-computable measure.

One family of computable measures is given by the collection of measures that are concentrated on a single computable point.  If $X\in\cs$ is a computable sequence, then the \emph{Dirac measure concentrated on $X$}, denoted $\delta_X$, is defined as follows:
\[
\delta_X(\sigma)=
\left\{
	\begin{array}{ll}
	1  & \mbox{if } \sigma\prec X\\
	0 & \mbox{if } \sigma\not\prec X
	\end{array}.
\right.
\]
More generally, for a measure $\mu$, we say that $X\in\cs$ is an \emph{atom of $\mu$} or a \emph{$\mu$-atom}, denoted $X\in\atom_\mu$, if $\mu(\{X\})>0$.  Kautz \cite{Kau91} fully characterized the atoms of a computable measure, showing that
$X\in\cs$ is computable if and only if $X$ is an atom of some computable measure.

There is a close connection between computable measures and 
 a certain class of Turing functionals. Recall that a continuous function $\Phi: \cs \to \cs$ may be defined from a function $\phi: \str \to \str$ such that

\begin{enumerate}
\item[(i)]$\sigma \prec \tau$, then $\phi(\sigma) \prec \phi(\tau)$, and 

\item[(ii)] For all $X \in \cs$, $\lim_n |\phi(X \uh n| = \infty$.
\end{enumerate}

\noindent Moreover, a representation $\phi$ for a continuous function must satisfy:

\begin{enumerate}
\item[(iii)] For all $m$, there exists $n$ such that for every $\sigma \in \{0,1\}^n$, $|\phi(\sigma)| \geq m$. 
\end{enumerate}

\noindent We then have $\Phi(X) = \bigcup_n \phi(X \uh n)$. The (total)  Turing functionals $\Phi: \cs \to \cs$ are those which may be defined in this manner from computable $\phi: \str \to \str$.  We will sometimes refer to total Turing functionals as \emph{$\tt$-functionals.} The partial Turing functionals $\Phi:\subseteq\cs\rightarrow\cs$ are given by those $\phi: \str \to \str$ which only satisfy condition (i). In this case $\Phi(X) = \bigcup_n \phi(X \uh n)$
may be ony a finite string.  We set $\dom(\Phi) = \{X: \Phi(X) \in \cs\}$.  For $\tau \in \str$ let $\Phi^{-1}(\tau)$ be defined by
\[
\Phi^{-1}(\tau)=\{ \sigma\in \str : \tau \preceq \phi(\sigma)\;\&\;(\forall \sigma'\prec\sigma)\; \tau\not\preceq\phi(\sigma')\}.  
\]
In particular, by our above convention, we have $\Phi^{-1}(\epsilon)=\{\epsilon\}$. Similarly, for $S \subseteq \str$ we define $\Phi^{-1}(S) = \bigcup_{\tau \in S} \Phi^{-1}(\tau)$. When $\mathcal{A}$ is a subset of $\cs$, we denote by $\Phi^{-1}(\mathcal{A})$ the set $\{X\in \dom(\Phi):\Phi(X) \in \mathcal{A}\}$. Note in particular that $\Phi^{-1}(\llb\tau\rrb) = \llb\Phi^{-1}(\tau)\rrb \cap \dom(\Phi)$.\\

The Turing functionals that induce computable measures are precisely the \emph{almost total} Turing functionals, where a Turing functional $\Phi$ is almost total if $\lambda(\dom(\Phi))=1$. Given an almost total Turing functional $\Phi$, the measure induced by $\Phi$, denoted $\lambda_\Phi$, is defined by
\[
\lambda_\Phi(\sigma)=\lambda(\llb\Phi^{-1}(\sigma)\rrb)
=\lambda(\{X:\Phi^X\succeq\sigma\}).
\]
It is not difficult to verify that $\lambda_\Phi$ is a computable measure. That is, for any string $\sigma$, $\lambda_{\Phi}(\sigma)$ is the limit of the computable increasing sequence $r_n(\sigma) =  2^{-n} \cdot \#\{\tau \in 2^n: \sigma \prec f(\tau)\}$.  On the other hand, since the domain of $\Phi$ has measure 1, we also have $\lambda_{\Phi}(\sigma) = 1 - \sum\{\lambda_{\Phi}(\rho): \rho \in 2^{|\sigma|} \setminus \{\sigma\}\}$.  This shows that $\lambda_{\Phi}(\sigma)$ is also the limit of a computable decreasing sequence, and hence is computable. 

 Moreover, one can easily show that given a computable probability measure $\mu$, there is some almost total functional~$\Phi$ such that $\mu=\lambda_\Phi$. 

Given $A\in\cs$, a function $\Phi:\cs\rightarrow\cs$ is an $A$-computable functional if $\Phi$ is defined in terms of an $A$-computable functional $\phi:\str\rightarrow\str$ as above.  If $\Phi$ is total and $\Phi(X)=Y$, we write $Y\leq_{\tt(A)}X$.


\subsection{Algorithmic randomness}
Martin-L\"{o}f \cite{ML66} observed that stochastic properties could
be viewed as special kinds of effectively presented measure zero sets and defined a random
real as one that avoids these measure zero
sets. More precisely, a sequence $X \in \cs$ is Martin-L\"{o}f random if for every
effective sequence $\U_1, \U_2, \dots$ of c.e.\ open sets with $\lambda(\U_n)
\leq 2^{-n}$, $X \notin \bigcap_n \U_n$.   This can be straightforwardly extended to any computable measure 
$\mu$ on $\cs$ by replacing the condition $\lambda(\U_n)
\leq 2^{-n}$ with $\mu(\U_n)\leq 2^{-n}$.  For a computable measure $\mu$, the collection of $\mu$-Martin-L\"of random sequences will be denoted $\MLR_\mu$; in the case that $\mu=\lambda$, we will simply write this collection as $\MLR$.  Martin-L\"of also proved the existence of a universal test $(\hat{\U_i})_{i\in\omega}$, so that $X\in\MLR$ if and only if $X\notin\bigcap_{i\in\omega}\hat{\U_i}$.

%

If $U:\str\rightarrow\str$ is a universal prefix-free machine (that is, for $\sigma,\tau\in\str$, if $\sigma\prec\tau$ and $U(\sigma)\halts$, then $U(\tau)\diverge$ and for every prefix-free machine $M$ there is some $e\in\omega$ such that $U(1^e0\sigma)=M(\sigma)$), then we define the \emph{prefix-free Kolmogorov complexity} of $\sigma\in\str$ to be ${K(\sigma)=\min\{|\tau|\colon U(\tau)\halts=\sigma\}}$.  One of the central results in algorithmic randomness is the following.

\begin{theorem}[Levin/Schnorr]\label{thm-levinschnorr}
For each computable measure $\mu$ and each $X\in\cs$, $X$ is $\mu$-Martin-L\"of random if and only if there is some $c$ such that for all $n$, 
\[
K(X\uh n)\geq -\log(\mu(X\uh n))-c.
\]
\end{theorem}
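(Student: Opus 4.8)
The plan is to prove the Levin–Schnorr theorem by establishing both directions via the standard correspondence between prefix-free complexity and a universal lower-semicomputable supermartingale (or equivalently, the universal Martin-Löf test). The core identity driving everything is that $2^{-K(\sigma)}$ behaves, up to a multiplicative constant, like the largest lower-semicomputable discrete semimeasure, and this ties the complexity $K(X\uh n)$ to the weight that an effective test can place on the cylinder $\llb X\uh n\rrb$.

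For the easy direction (complexity bound implies randomness), I would argue the contrapositive: suppose $X\notin\MLR_\mu$, so $X\in\bigcap_n\hat\U_n$ for the universal test. For each $n$, $X$ lies in some interval $\llb\sigma\rrb$ with $\sigma\prec X$ and $\llb\sigma\rrb\subseteq\hat\U_n$, and the collection of such minimal intervals across all $n$ can be assembled into a prefix-free set on which one builds a machine witnessing that $K(\sigma)\leq -\log\mu(\sigma)-c$ fails for every $c$. Concretely, since $\sum_\sigma 2^{-(-\log\mu(\sigma)-c)}=2^c\sum_\sigma\mu(\sigma)$ stays bounded over the prefix-free witnesses supplied by the test at level $n$, the Kraft–Chaitin (KC) theorem produces a prefix-free machine assigning each such $\sigma$ a description of length roughly $-\log\mu(\sigma)-n+O(1)$, so that $K(X\uh{|\sigma|})+\log\mu(X\uh{|\sigma|})\to-\infty$, contradicting the hypothesis.

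For the harder direction (randomness implies the complexity bound), I would build, from the universal prefix-free machine $U$, an effective $\mu$-test capturing exactly the strings whose complexity dips too low. Define, for each $c$, the c.e.\ open set $\U_c=\bigcup\{\llb\sigma\rrb: K(\sigma)<-\log\mu(\sigma)-c\}$; the key estimate is that $\mu(\U_c)\leq 2^{-c}$, which follows because $\mu(\sigma)<2^{-c}2^{-K(\sigma)}$ on each generating string and the $2^{-K(\sigma)}$ sum to at most $1$ by the Kraft inequality for the prefix-free domain of $U$. Thus $(\U_c)_{c}$ is a genuine $\mu$-Martin-Löf test, so if $X$ is $\mu$-random then $X\notin\U_c$ for some $c$, which is precisely the assertion that $K(X\uh n)\geq-\log\mu(X\uh n)-c$ for all $n$.

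\textbf{The main obstacle} I anticipate is the measure bound $\mu(\U_c)\leq 2^{-c}$ in the hard direction: one must be careful that the generating strings $\sigma$ can overlap (shorter strings subsuming longer ones), so the naive sum of $\mu(\sigma)$ overcounts. The fix is to restrict to a prefix-free (minimal) subset of generators, or to sum instead over the prefix-free domain of $U$ using that each description of length $<-\log\mu(\sigma)-c$ contributes weight $2^{-K(\sigma)}\leq 2^{-|\rho|}$ for its shortest program $\rho$, whence $\mu(\U_c)\leq\sum_\sigma 2^{-c}2^{-K(\sigma)}\leq 2^{-c}\sum_\rho 2^{-|\rho|}\leq 2^{-c}$. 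Because $U$ is computable, $\U_c$ is uniformly c.e.\ and $\mu$ is computable, so the test is effective as required, and the equivalence with the universal test $(\hat\U_i)$ closes the argument.
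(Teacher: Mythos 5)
This theorem is stated in the paper as classical background (attributed to Levin and Schnorr) and is given no proof there, so there is no internal argument to compare yours against; what matters is whether your blind proof is sound, and it is. Your hard direction is the standard one: the sets $\U_c=\bigcup\{\llb\sigma\rrb: K(\sigma)<-\log\mu(\sigma)-c\}$ are uniformly c.e.\ (this uses computability of $\mu$, since the condition pairs an upper-semicomputable $K$ against a computable real threshold), and your fix for the overcounting issue --- passing to a prefix-free set of minimal generators so that $\mu(\U_c)\leq\sum_\sigma\mu(\sigma)<2^{-c}\sum_\sigma 2^{-K(\sigma)}\leq 2^{-c}$ by the Kraft inequality --- is exactly the right move; failing $X\notin\U_c$ for some $c$ is verbatim the desired bound. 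Your easy direction via Kraft--Chaitin is also correct in substance, though written loosely: the requests should be tied to test levels with geometric slack (e.g.\ from level $\hat\U_{2n}$ request length $\lceil-\log\mu(\sigma)\rceil-n$ for the minimal generators $\sigma$, so the total weight is at most $\sum_n 2^n\cdot 2^{-2n}\leq 1$), and one should note that prefix-freeness of the requested strings across levels is irrelevant since Kraft--Chaitin only needs the weight bound; also $\lceil-\log\mu(\sigma)\rceil$ is only computable up to an additive constant for a computable real $\mu(\sigma)$, which is harmless. With those small tightenings your proof is complete.
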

We will often use the relativized version of the Levin-Schnorr theorem throughout the paper.  

The following results will feature prominently in this study:

\begin{theorem} \label{thm-randpres}  Let $\Phi:\cs\rightarrow\cs$ be an almost total Turing functional.
\begin{itemize}
\item[(i)]  If $X\in\MLR$ then $\Phi(X)\in\MLR_{\lambda_\Phi}$.
\item[(ii)] If $Y\in\MLR_{\lambda_\Phi}$, then there is some $X\in\MLR$ such that $\Phi(X)=Y$.
\end{itemize}
\end{theorem}

We will refer to (i) as the \emph{preservation of randomness}, while (ii) will be referred to as the \emph{no randomness ex nihilo principle}.  We will also use relativized versions of these two results.

\section{A local approach to defining randomness in a $\Pi^0_1$ class} \label{Local}

As described in the introduction, on the local approach to defining the random members of a given $\Pi^0_1$ class $\P$, the randomness of some $X\in \P$ is determined by the behavior of initial segments of $X$ in the set of extendible nodes of $\P$.  There are several ways to make this precise.  The first can be seen as a formalization of the idea of randomly producing a path through $T_\P$, the set of extendible nodes of $\P$.

\subsection{Randomly produced paths through $T_\P$ }

Suppose we would like to randomly produce a path through $T_\P$ for a given $\Pi^0_1$ class $\P$.  We can use the following procedure to construct a path.  Having constructed a string $\sigma\in T_\P$ thus far, if $\sigma$ has only one extension in $T_\P$, we have no choice but to follow that extension.  However, if both $\sigma0$ and $\sigma1$ are in $T_\P$, then we toss an unbiased coin to determine which of these extensions our path will pass through.

In the context of algorithmic randomness, we can formalize this procedure as follows.   Given a Martin-L\"of random $R\in\cs$\!, suppose that we have produced an initial segment  $\sigma\in T_\P$ of a path in $\P$, having thus far used $R\uh n$ as advice for some $n\in\omega$.  If $\sigma\fr i\in T_\P$ but $\sigma\fr (1-i)\notin T_\P$ for $i\in\{0,1\}$, we have no choice but to extend $\sigma$ to $\sigma\fr i$.  However, if both $\sigma\fr 0\in T_\P$ and $\sigma\fr1\in T_\P$, then we extend $\sigma$ to $\sigma\fr R(n)$.  

The resulting path will have the form $X=R\oplus_{B_X}\!N_X$, where 
$B_X$ codes the levels where there is branching in $T_\P$ along $X$ and $N_X$ consists of the values that are not determined by $R$, which occur at non-branching levels of $T_\P$ along $X$.  More precisely, we have
\begin{itemize}
\item[(i)] $B_X=\{n:(X\uh n)^\curvearrowright\in T_\P\}$, and
\item[(ii)] if $\overline{ B_X}=\{n_0<n_1<\dotsc\}$, then for each $k\in\omega$, $X\uh n_k$ has only one extension in $T_\P$, namely, $(X\uh n_k)\fr N_X(k)$ (where $X\uh 0=\epsilon$).
\end{itemize}
Observe that $B_X,N_X\leq_T R\oplus T_\P$.  In fact, we have $B_X,N_X\leq_{tt(T_\P)}R$, which implies that there is a total $T_\P$-computable functional $\Psi_\P$ such that
$\Psi_\P(R)=R\oplus_{B_X}\!N_X=X$.  We now define the randomly produced paths through $T_\P$ as follows.

\begin{definition}
For a $\Pi^0_1$ class $\P$ with tree of extendible nodes $T_\P$, the \emph{randomly produced paths through $T_\P$} are given by the set $\{\Psi_\P(R): R\in\MLR^{T_\P}\}$.
\end{definition}

Let us consider several examples.

\begin{example}\label{yy}{\ }
\begin{enumerate}
\item Let $\P=\{X\oplus X:X\in\cs\}$.  Then the randomly produced paths through $T_\P$ 
are all sequences of the form $R\oplus R$ for $R\in\MLR$.  
\item  More generally, for $n\in\omega$, let $\P=\{\bigoplus_{i=1}^n X:X\in\cs\}$.  Then the randomly produced paths through $T_\P$ 
are all sequences of the form $\bigoplus_{i=1}^n R$ for $R\in\MLR$. 
\item Let $\P$ be a $\Pi^0_1$ class that contains an isolated point $X$.  Then $X$ is a randomly produced path through $T_\P$.
\end{enumerate}
\end{example}

This latter example appears to be an undesirable consequence of the definition of a randomly produced path through $T_\P$ for a $\Pi^0_1$ class $\P$, since the isolated points of a $\Pi^0_1$ class are computable.  However, we take a randomly produced path through $T_\P$ to be one produced by tossing a coin whenever we arrive at a branching node in $T_\P$, counting isolated points as random is consistent with this definition.

Let $\lambda_{\Psi_\P}$ be the $T_\P$-computable measure induced by $\Psi_\P$.  Then we have:
\begin{proposition}\label{prop-rpp}
Let $\P$ be a $\Pi^0_1$ class.  For $X\in\cs$, $X$ is a randomly produced path through $T_\P$ if and only if $X\in\MLR_{\lambda_{\Psi_\P}}^{T_\P}$.
\end{proposition}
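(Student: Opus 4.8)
The plan is to recognize the statement as a direct instance of the relativized form of Theorem~\ref{thm-randpres}, applied to the functional $\Psi_\P$ relative to the oracle $T_\P$. The only preliminary work is to confirm that $\Psi_\P$ satisfies the hypotheses of that theorem.

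First I would verify that $\Psi_\P$ is an almost total $T_\P$-computable Turing functional. Indeed, the construction preceding the definition produces, for every oracle $R\in\cs$, a well-defined infinite path $\Psi_\P(R)=R\oplus_{B_X}N_X\in\P$: at each node of $T_\P$ reached so far we either follow the unique extension in $T_\P$ or, at a branching node, read the next unused bit of $R$. Thus $\Psi_\P$ is total, so in particular $\lambda(\dom(\Psi_\P))=1$ and $\Psi_\P$ is almost total; and the measure it induces is by definition $\lambda_{\Psi_\P}$, which is $T_\P$-computable. Hence Theorem~\ref{thm-randpres}, relativized to $T_\P$, applies with $\Phi=\Psi_\P$ and $\lambda_\Phi=\lambda_{\Psi_\P}$.

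For the forward direction, suppose $X$ is a randomly produced path through $T_\P$, so that $X=\Psi_\P(R)$ for some $R\in\MLR^{T_\P}$. By the preservation of randomness, part~(i) of Theorem~\ref{thm-randpres} relativized to $T_\P$, we obtain $X=\Psi_\P(R)\in\MLR_{\lambda_{\Psi_\P}}^{T_\P}$. Conversely, if $X\in\MLR_{\lambda_{\Psi_\P}}^{T_\P}$, then by the no randomness ex nihilo principle, part~(ii) of the same relativized theorem, there is some $R\in\MLR^{T_\P}$ with $\Psi_\P(R)=X$; this $R$ witnesses, via the definition, that $X$ is a randomly produced path through $T_\P$.

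The substantive content has already been discharged in the construction of $\Psi_\P$ and in Theorem~\ref{thm-randpres}, so I do not expect a genuine obstacle here; the only point requiring care is the bookkeeping that the measure $\lambda_{\Psi_\P}$ named in the statement is precisely the image measure $\lambda_\Phi$ appearing in Theorem~\ref{thm-randpres} when $\Phi=\Psi_\P$, and that the relativization of both parts of that theorem to $T_\P$ is uniform. Both facts follow immediately from the definitions, so the proposition is essentially a corollary of the preservation and ex nihilo principles.
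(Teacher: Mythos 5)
Your proposal is correct and matches the paper's own proof, which likewise applies the preservation of randomness for the forward direction and the no randomness ex nihilo principle for the converse, both relativized to $T_\P$ with $\Phi=\Psi_\P$ and $\lambda_\Phi=\lambda_{\Psi_\P}$. The extra verification you include (that $\Psi_\P$ is total, hence almost total, and that the induced measure is the one named in the statement) is bookkeeping the paper leaves implicit, so there is no substantive difference.
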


\begin{proof}
($\Rightarrow$)  If $X$ is a randomly produced path through $T_\P$, then $X=\Psi_\P(R)$ for some $R\in\MLR^{T_\P}$. By the preservation of randomness theorem, it follows that $X\in\MLR_{\lambda_{\Psi_\P}}^{T_\P}$.\\
($\Leftarrow$)  If $X\in\MLR_{\lambda_{\Psi_\P}}^{T_\P}$, then by the no randomness ex nihilo principle, there is some $R\in\MLR^{T_\P}$ such that  $X=\Psi_\P(R)$, and hence $X$ is a randomly produced path through $T_\P$.
\end{proof}

Observe that the complexity of the measure $\lambda_{\Psi_\P}$ is determined by the complexity of the underlying tree of extendible nodes $T_\P$.  Thus, if $\P$ is decidable, then $T_\P$ is computable and hence so is $\lambda_{\Psi_\P}$.  However, if $\P$ is undecidable, then $T_\P$ is co-c.e.\ but not computable, from which it follows that $\lambda_{\Psi_\P}$ is merely $\emptyset'$-computable and not computable.  Although algorithmic randomness with respect to a computable measure has been well-studied, the topic of algorithmic randomness with respect to an $\emptyset'$-computable measure has not been treated systematically.

\subsection{An alternative characterization}

Observe that if $X$ is a randomly produced path through $T_\P$ and is not isolated in $\P$, then there is a unique $R\in\MLR^{T_\P}$ such that $\Psi_\P(R)=X$.  We can obtain an equivalent characterization of the randomly produced paths through $T_\P$ by instead considering a $T_\P$-computable functional $\Phi_\P$ that maps $\cs$ onto $\P$ and does not necessarily satisfy the above uniqueness condition on random sequences.

We define $\Phi_\P:\cs\rightarrow\P$ by means of a $T_\P$-computable function $\phi:\str\rightarrow T_\P$, which we define inductively as follows.  First, we set $\phi(\epsilon)=\epsilon$.  Next, suppose $\phi(\sigma)$ is defined for every $\sigma\in 2^n$.  Then to define $\phi(\sigma\fr i)$ for $i\in\{0,1\}$, we have two cases to consider.  
\begin{itemize}
\item[] \emph{Case 1}:  If $\phi(\sigma)\fr i\in T_\P$ for each $i\in\{0,1\}$, then for each such $i$ we set $\phi(\sigma  i)=\phi(\sigma)\fr i$.
\item[] \emph{Case 2}:  If $\phi(\sigma)\fr i\notin T_\P$ for some $i\in\{0,1\}$ (so that $\phi(\sigma)\fr (1-i)\in T_\P$, then we set $\phi(\sigma 0)=\phi(\sigma 1)=\phi(\sigma)\fr (1-i)$.
\end{itemize}
%
For $X\in\cs$, we define $\Phi_\P(X)=\bigcup_{n\in\omega}\phi(X\uh n)$.  Note that $\Phi_\P$ is a total $T_\P$-computable functional, and hence by Theorem \ref{thm-randpres}(i) it induces a $T_\P$-computable measure, which we will write as $\mu_\P$.

We now prove the following.

\begin{theorem}\label{thm-2equiv}
Let $\P$ be a $\Pi^0_1$ class.  Then $X\in\cs$ is a randomly produced path through $T_\P$ if and only if $X$ is $T_\P$-Martin-L\"of random with respect to $\mu_\P$.
\end{theorem}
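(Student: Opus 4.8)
The plan is to reduce the theorem to an identity between two computable measures and then verify that identity by a direct counting argument. By Proposition~\ref{prop-rpp}, $X$ is a randomly produced path through $T_\P$ if and only if $X\in\MLR_{\lambda_{\Psi_\P}}^{T_\P}$. Since $\mu_\P=\lambda_{\Phi_\P}$ and, for a fixed oracle $T_\P$, the collection of relativized Martin-L\"of random sequences depends only on the underlying measure, it suffices to prove the measure identity $\lambda_{\Phi_\P}=\lambda_{\Psi_\P}$. I would establish this by comparing the two measures on every cylinder $\llb\tau\rrb$, which is enough since a measure is determined by its values on strings.

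For the computation, first dispose of the case $\tau\notin T_\P$: every value $\Psi_\P(R)$ and every value $\Phi_\P(Z)$ lies on $T_\P$, so both measures vanish there. Now fix $\tau\in T_\P$ with $|\tau|=n$ and set $c(\tau)=\#\{k<n:(\tau\uh k)\text{ is a branching node of }T_\P\}$, the number of branching levels strictly below $\tau$. On the $\Psi_\P$ side, bits of $R$ are consumed only at branching nodes while the non-branching bits are forced (hence automatic because $\tau\in T_\P$), so $\Psi_\P(R)\succeq\tau$ holds exactly when the first $c(\tau)$ bits of $R$ equal the branching decisions read off $\tau$; thus $\lambda_{\Psi_\P}(\tau)=2^{-c(\tau)}$. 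On the $\Phi_\P$ side, the defining $\phi$ advances the output by one level per input bit and so is length-preserving, giving $\Phi_\P(Z)\succeq\tau$ if and only if $\phi(Z\uh n)=\tau$. By Cases~1 and~2 of the definition of $\phi$, at a branching level the input bit is forced while at a non-branching level both input bits collapse to the same forced child, so the number of length-$n$ preimages is $\#\{\sigma\in 2^n:\phi(\sigma)=\tau\}=2^{\,n-c(\tau)}$, whence $\lambda_{\Phi_\P}(\tau)=2^{\,n-c(\tau)}\cdot 2^{-n}=2^{-c(\tau)}$. The two values agree, so $\lambda_{\Phi_\P}=\lambda_{\Psi_\P}$ and the theorem follows.

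The crux of the whole argument is this counting identity for $\phi$, which rests on a single structural observation: $\phi$ is length-preserving and merges both input bits onto the forced child precisely at non-branching nodes. The one point requiring care is the finite-branching case (in particular isolated points of $\P$), where $c(\tau)$ eventually stabilizes and both measures acquire an atom at the corresponding computable path; here the counting still yields equal values, so no special treatment is needed, but one should observe that the notation $R\oplus_{B_X}\!N_X$ for $\Psi_\P$ is only literal when branching occurs infinitely often along $X$.

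As an alternative, one could avoid the measure computation and argue directly with the functionals, using Theorem~\ref{thm-randpres}. Given $X=\Psi_\P(R)$ with $R\in\MLR^{T_\P}$, one would interleave $R$ at the branching positions with a fresh sequence $W$ that is Martin-L\"of random relative to $R\oplus T_\P$ to produce a $Z$ with $\Phi_\P(Z)=X$, and then invoke preservation of randomness; conversely one would apply the no randomness ex nihilo principle to obtain a $\Phi_\P$-preimage of an $\mu_\P$-random $X$ and extract its branching subsequence as the required $R$. The main obstacle on this route is the van Lambalgen-style step: the interleaving pattern $B_X$ is itself $R$-dependent, so one must verify that the rearranged join $R\oplus_{B_X}\!W$ remains $T_\P$-Martin-L\"of random and that passing to the branching subsequence preserves randomness. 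The measure-equality argument sidesteps exactly this difficulty, which is why I would prefer it.
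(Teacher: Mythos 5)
Your proposal is correct and follows essentially the same route as the paper: reduce via Proposition \ref{prop-rpp} to the measure identity $\lambda_{\Psi_\P}=\lambda_{\Phi_\P}$, and verify it by showing both measures assign $2^{-c(\tau)}$ to each $\tau\in T_\P$, where $c(\tau)$ counts the branching proper initial segments of $\tau$ (the paper's quantity $k$, i.e.\ $\theta_\P(\tau)$). The only cosmetic difference is that you state the preimage count for $\phi$ in closed form, whereas the paper establishes the same identity by induction on $|\tau|$ using the same two-case analysis of branching versus non-branching nodes.
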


\begin{proof}
We show inductively that $\Phi_\P$ and $\Psi_\P$ induce the same $T_\P$-computable measure.  First, we have $\Phi_\P^{-1}(\epsilon)=\Psi_\P^{-1}(\epsilon)=\cs$, and hence $\lambda_{\Phi_\P}(\epsilon)=1=\lambda_{\Psi_\P}(\epsilon)$.  Next, suppose that $\lambda_{\Phi_\P}(\sigma)=\lambda_{\Psi_\P}(\sigma)$ for every $\sigma$ of length $n$.  For a fixed $\sigma\in2^n$, let
\[
k=\#\{i\leq n: (\sigma\uh i)^\curvearrowright\in T_\P\}.
\]
That is, $k$ is the number of initial segments of $\sigma$ that are branching nodes in $T_\P$, i.e., strings $\tau$ such that $\tau0$ and $\tau1$ are in $T_\P$.  For our fixed $\sigma$, it follows from the definition of $\Psi_\P$ that there is some $\tau\in\str$ with $|\tau|=k$ such that $\Psi^{-1}_\P(\sigma)=\{\tau\}$.  Hence $\lambda_{\Psi_\P}(\sigma)=2^{-k}$.  Similarly, there are $\tau_1,\dotsc,\tau_j$ such that $\Phi^{-1}_\P(\sigma)=\{\tau_1,\dotsc,\tau_j\}$, so that $\lambda_{\Phi_\P}(\sigma)=\sum_{\ell=1}^j2^{-|\tau_\ell|}=2^{-k}$.  We now consider two cases:
\begin{itemize}
\item[] \emph{Case 1}:  Suppose that $\sigma\fr  i\in T_\P$ for each $i\in\{0,1\}$. Then:
\begin{itemize}
\item  For each $i\in\{0,1\}$, by definition of $\Psi_\P$, we set $\Psi^{-1}_\P(\sigma\fr  i)=\{\tau\fr  i\}$. 
\item  For each $i\in\{0,1\}$, by definition of $\Phi_\P$, we set $\Phi^{-1}_\P(\sigma\fr  i)=\{\tau \fr i:\tau\in \Phi^{-1}_\P(\sigma)\}$.
\end{itemize}
It follows that for each $i\in\{0,1\}$,
\[
\lambda_{\Psi_\P}(\sigma\fr i)=2^{-(k+1)}=\frac{1}{2}\sum_{\ell=1}^j2^{-|\tau_\ell|}=\sum_{\ell=1}^j2^{-|\tau_\ell|+1}=\sum_{\ell=1}^j2^{-|(\tau_\ell)\fr i|}=\lambda_{\Phi_\P}(\sigma\fr i).
\]
\item[] \emph{Case 2}:  Suppose that for some $i\in\{0,1\}$, $\sigma\fr  i\notin T_\P$ and $\sigma\fr (1-i)\in T_\P$.  Then:
\begin{itemize}
\item  By definition of $\Psi_\P$, we set $\Psi^{-1}_\P(\sigma\fr(1-i))=\{\tau\}$. 
\item  By definition of $\Phi_\P$, we set 
\[
\Phi^{-1}_\P(\sigma\fr(1-i))=\{\tau  0:\tau\in \Phi^{-1}_\P(\sigma)\}\cup\{\tau  1:\tau\in \Phi^{-1}_\P(\sigma)\}.
\]
\end{itemize}
\end{itemize}
It follows that
\begin{align*}
\lambda_{\Psi_\P}(\sigma\fr(1-i))=2^{-k}&=\sum_{\tau\in \Phi^{-1}_\P(\sigma)}\lambda(\tau)\\
&=\sum_{\tau\in \Phi^{-1}_\P(\sigma)}\lambda(\tau 0)+\sum_{\tau\in \Phi^{-1}_\P(\sigma)}\lambda(\tau 1)\\
&=\lambda_{\Phi_\P}(\sigma\fr(1-i))
\end{align*}
and hence
\[
\lambda_{\Psi_\P}(\sigma\fr i)=\lambda_{\Phi_\P}(\sigma\fr i)=0.
\]
It follows by induction that $\lambda_{\Psi_\P}=\lambda_{\Phi_\P}$.  Hence by Proposition \ref{prop-rpp}, $X\in\cs$ is a randomly produced path through $T_\P$ if and only if $X\in\MLR^{T_\P}_{\lambda_{\Psi_\P}}=\MLR^{T_\P}_{\lambda_{\Phi_\P}}$.
\end{proof}
%
%

\subsection{Initial segment complexity characterization of randomly produced paths}\label{subsec-isc1}

We can also characterize the randomly produced paths through $T_\P$ for a given $\Pi^0_1$ class $\P$ in terms of initial segment complexity.  To provide such a characterization, we need to identify a threshold so that the collection of sequences in $\P$ whose initial segment complexity is above this threshold is precisely the collection of randomly produced paths through $T_\P$.  One possibility is to define $X\in\P$ to be incompressible in $\P$ if
\[
K(X\uh n)\geq \log\#T_\P\uh n-O(1),
\]
a notion considered by van Lambalgen in \cite{Van87}.  However, in general, the collection of members of $\P$ satisfying this definition of incompressibility does not agree with the collection of randomly produced paths through $T_\P$, since $\log\#T_\P\uh n$ measures how much branching has occurred in $T_\P$ up to strings of length $n$, but it does not take into consideration how this branching (or lack of branching) is distributed among these strings.  In particular, such an approach is at odds with the local approach to defining randomness in $\P$ that we are considering here.  We will look more closely at a related notion of incompressibility in Section \ref{sec-Intermediate}.

In order to better capture the local structure of $T_\P$ along initial segments of a sequence $X\in\P$, we propose the following threshold.  For $\sigma\in\str$, we define 
\[
\theta_\P(\sigma)=\#\{n: 1\leq n\leq |\sigma|\;\&\;(\sigma\uh n)^\curvearrowright\in T_\P\},
\]
i.e., $\theta_\P(\sigma)$ is the number of initial segments of $\sigma$ whose siblings are extendible (a quantity used in the proof of Theorem \ref{thm-2equiv}).  Note that $\theta_\P\equiv_T T_\P$, and hence $\theta_\P$ is a computable function if and only if $\P$ is a decidable $\Pi^0_1$ class. 

Let us consider several examples of $\theta_\P$ for various $\Pi^0_1$ classes $\P$.

\begin{example}{\ }
\begin{enumerate}

\item If $\P=\cs$, so that $T_\P=\str$, then $\theta_\P(\sigma)=|\sigma|$ for every $\sigma\in\str$.  Thus, we have $K(X\uh n)\geq n-O(1)$ for every random path in $\cs$, which agrees with the standard definition of Martin-L\"of randomness.\\

\item If $\P=\{00,11\}^\omega$, so that $T_\P=\{00,11\}^{<\omega}$, then for every $\sigma\in\{00,11\}^{<\omega}$, we have $\theta_\P(\sigma)=|\sigma|/2$, while for every $\sigma\in T_\P$ of the form $\tau\fr i$ for some $\tau\in\{00,11\}^{<\omega}$ and $i\in\{0,1\}$, we have $\theta_\P(\sigma)=(|\sigma|+1)/2$.  Thus, the incompressible members of $\P$ are those $X$ satisfying
\[
K(X\uh n)\geq n/2-O(1),
\]
which agrees with the notion of Martin-L\"of randomness on $\{00,11\}^\omega$ as discussed in part 1 of Example \ref{yy} above.\\

\item Let $\P$ be a $\Pi^0_1$ class with an isolated point $X$.  Then there is some least $k$ such that $\llb X\uh k\rrb\cap\P=\{X\}$.  Thus there is some $j\leq k$ such that for every $n\geq k$, $\theta_\P(X\uh n)=j$.  Since $K(n)\geq j-O(1)$ for every $n$, it follows that
\[
K(X\uh n)=K(n)-O(1)\geq j-O(1)=\theta_\P(X\uh n)-O(1)
\]
for every $n$.  Thus, isolated points in $\P$ satisfy the definition of incompressibility.
\end{enumerate}
\end{example}

These examples can be derived from the following result.

\begin{theorem}\label{thm-rpp-isc}
Let $\P$ be a $\Pi^0_1$ class, and let $\theta_\P$ be as above.  Then $X\in\P$ is a randomly produced path through $T_\P$ if and only if 
\[
K^{T_\P}(X\uh n)\geq \theta_\P(X\uh n)-O(1).
\]
\end{theorem}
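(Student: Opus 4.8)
The plan is to reduce the statement to the relativized Levin--Schnorr theorem (Theorem~\ref{thm-levinschnorr}) by identifying the threshold $\theta_\P$ with the $-\log$ of the measure $\mu_\P$ along paths in $\P$. By Proposition~\ref{prop-rpp} together with Theorem~\ref{thm-2equiv}, $X\in\P$ is a randomly produced path through $T_\P$ if and only if $X\in\MLR^{T_\P}_{\mu_\P}$, where $\mu_\P=\lambda_{\Phi_\P}=\lambda_{\Psi_\P}$ is the $T_\P$-computable measure induced by $\Phi_\P$ (equivalently by $\Psi_\P$). Since $\mu_\P$ is $T_\P$-computable, the version of the Levin--Schnorr theorem relativized to $T_\P$ applies, giving that $X\in\MLR^{T_\P}_{\mu_\P}$ if and only if there is a constant $c$ with $K^{T_\P}(X\uh n)\geq -\log(\mu_\P(X\uh n))-c$ for all $n$. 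Thus the whole theorem reduces to the single claim that $-\log(\mu_\P(X\uh n))=\theta_\P(X\uh n)$ for every $X\in\P$ and every $n$.

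The heart of the argument is this last identity, and it is essentially already contained in the computation carried out in the proof of Theorem~\ref{thm-2equiv}. There I showed that for $\sigma\in 2^n$ the preimage $\Psi_\P^{-1}(\sigma)$ is a single string of length $k$, where $k=\#\{i\leq n:(\sigma\uh i)^\curvearrowright\in T_\P\}$, so that $\lambda_{\Psi_\P}(\sigma)=2^{-k}$. Reading this off for an extendible $\sigma$, which is the only relevant case since $X\uh n\in T_\P$ whenever $X\in\P$, and observing that the $i=0$ term contributes nothing because $\epsilon$ has no sibling in $T_\P$, the count $k$ is exactly $\theta_\P(\sigma)=\#\{m:1\leq m\leq|\sigma|\;\&\;(\sigma\uh m)^\curvearrowright\in T_\P\}$. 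Hence $\mu_\P(X\uh n)=\lambda_{\Psi_\P}(X\uh n)=2^{-\theta_\P(X\uh n)}$, so that $-\log(\mu_\P(X\uh n))=\theta_\P(X\uh n)$ as required. Substituting this into the Levin--Schnorr characterization from the previous paragraph yields exactly the stated inequality $K^{T_\P}(X\uh n)\geq\theta_\P(X\uh n)-O(1)$.

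Most of the work has been front-loaded into Theorems~\ref{thm-2equiv} and~\ref{thm-levinschnorr}, so what remains is careful bookkeeping rather than a genuine obstacle. The one point deserving attention is the apparent off-by-one between the index set $\{i\leq n\}$ appearing in the proof of Theorem~\ref{thm-2equiv} and the index set $\{1\leq m\leq|\sigma|\}$ defining $\theta_\P$; I would verify explicitly that the $i=0$ term vanishes, so that the two counts coincide and no additive constant is introduced into the exponent (which would otherwise be harmless, being absorbed into the $O(1)$, but it is cleaner to note that it is genuinely absent). For completeness I would also remark that if $X\notin\P$ then $X\uh n\notin T_\P$ for some $n$ and $\mu_\P(X\uh n)=0$, whence $-\log(\mu_\P(X\uh n))=\infty$ and $X$ fails the Levin--Schnorr inequality; this is consistent with restricting the statement to $X\in\P$ and reflects the fact that, by construction of $\Psi_\P$, every randomly produced path already lies in $\P$.
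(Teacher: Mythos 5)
Your proof is correct and follows essentially the same route as the paper's: both reduce the statement, via Theorem~\ref{thm-2equiv}, to $T_\P$-Martin-L\"of randomness with respect to $\mu_\P$, apply the relativized Levin--Schnorr theorem, and conclude by computing $\mu_\P(X\uh n)=2^{-\theta_\P(X\uh n)}$ from the branching-count identity established in the proof of Theorem~\ref{thm-2equiv}. Your explicit check that the $i=0$ term vanishes (so the count equals $\theta_\P$ exactly) is a small point of care the paper glosses over, but it does not constitute a different argument.
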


\begin{proof}
The key observation to prove this theorem comes from comparing the function $\theta_\P$ to the measure $\lambda_{\Phi_\P}$ as defined in the previous subsection.  Writing $\lambda_{\Phi_\P}$ as $\mu$, we have $\mu(\tau)=\lambda(\sigma)$, where $\phi(\sigma)=\tau$ for the $T_\P$-computable function $\phi:\str\rightarrow T_\P$ used in the definition of $\Phi_\P$.  By the definition of $\phi$, this means that $\tau$ has $|\sigma|$ initial segments whose siblings are extendible, i.e.\ $\theta_\P(\tau)=|\sigma|$.  Thus it follows that 
\[
\mu(\tau)=\lambda(\sigma)=2^{-|\sigma|}=2^{-\theta_\P(\tau)}.
\]
Now by Theorem \ref{thm-2equiv}, the randomly produced paths through $T_\P$ are precisely the $T_\P$-Martin-L\"of random members of $\cs$ with respect to $\mu$.  By the Levin-Schnorr Theorem for $T_\P$-Martin-L\"of randomness with respect to $\mu$, it follows that $X$ is a random path in $\P$ if and only if
\[
K^{T_\P}(X\uh n)\geq-\log\mu(X\uh n)-O(1)=\theta_\P(X\uh n)-O(1).
\]
\end{proof}

\section{A global approach to defining randomness in a $\Pi^0_1$ class} \label{Global}

We now turn to a global definition of the random members of a fixed $\Pi^0_1$ class $\P$, according to which the randomness of some $X\in \P$ is defined in terms of properties of the entire class $\P$.  The definition we provide here will be obtained by considering the Lebesgue measure conditional to $\P$.

\subsection{Conditionalizing the Lebesgue measure}

Let us first consider the case that $\P$ is a $\Pi^0_1$ class of positive Lebesgue measure.  For $\sigma\in\str$, we define the \emph{relative measure of $\llb\sigma\rrb$ in $\P$}
is 
\[
\lambda(\sigma\mid\P)=\dfrac{\lambda(\llb\sigma\rrb\cap\P)}{\lambda(\P)}.
\]
If $\lambda(\P)=0$, we clearly need an alternative definition.  Thus, we have the following definition.

\begin{definition}
Let $\P$ be a $\Pi^0_1$ class.  
\begin{enumerate}
\item The \emph{limiting relative measure of $\llb\sigma\rrb$ in $\P$} is
\[
\lambda_\P(\sigma)=\lim_{n\rightarrow\infty}\dfrac{\lambda(\llb\sigma\rrb\cap \llb T_\P\uh n\rrb)}{\lambda(T_\P\uh n)}.
\]

\item The \emph{upper limiting relative measure of $\llb\sigma\rrb$ in $\P$} is
\[
\lambda^+_\P(\sigma)=\limsup_{n\rightarrow\infty}\dfrac{\lambda(\llb\sigma\rrb\cap \llb T_\P\uh n\rrb)}{\lambda(T_\P\uh n)}.
\]

\item The \emph{lower limiting relative measure of $\llb\sigma\rrb$ in $\P$} is
\[
\lambda^-_\P(\sigma)=\liminf_{n\rightarrow\infty}\dfrac{\lambda(\llb\sigma\rrb\cap \llb T_\P\uh n\rrb)}{\lambda(T_\P\uh n)}.
\]
\end{enumerate}
\end{definition}

\noindent For a $\Pi^0_1$ class $\P$ of positive measure, it follows that $\lambda(\sigma\mid\P)=\lambda_\P(\sigma)$ for every $\sigma\in\str$.

The reason that we consider upper and lower limiting relative measure and not merely limiting relative measure is that there are $\Pi^0_1$ classes $\P$ for which the limiting relative measure of some $\sigma\in T_\P$ does not exist.  The following is an example of a decidable $\Pi^0_1$ class $\P$ such that $\lambda_\P(0)$ is not defined.

\begin{example} \label{ex3}  Let $\P$ consist of all  sequences of the form $0 \fr X$ such that $X(4n) = X(4n+1) = 0$ for all $n$ together with all sequences of the form  $1 \fr X$ such that $X(4n+2) = X(4n+3) = 1$ for all $n$. 
Then, for all $n$, we have 
\[
\dfrac{\lambda(\llb 0 \rrb \cap \llb T_\P \uh (4n+1) \rrb) }{\lambda(\llb T_\P \uh (4n+1)  \rrb)} = 1/2 \;\;\;\text{and}\;\;\;
\dfrac{\lambda(\llb 0 \rrb \cap \llb T_\P \uh (4n+3) \rrb)}{\lambda(\llb T_\P \uh (4n+3)  \rrb)} = 1/5. 
\]
Thus $\lambda_\P(0)$ is not defined. 
\end{example}

We use the tree $T_\P$ of extendible nodes to define $\lambda_\P$, as different choices of an underlying tree yield different measures. 
For example, suppose for any tree $T\subseteq\str$ we define 
\[
\lambda_T(\sigma)=\lim_{n\rightarrow\infty}\dfrac{\lambda(\llb\sigma\rrb\cap \llb T\uh n\rrb)}{\lambda(\llb T \uh n \rrb)}.
\]
For the set $\P = \{0^{\omega}, 1^{\omega}\}$, clearly $\lambda_\P(0) = 1/2$.  However, consider the tree 
\[
T = \{0^n: n \in \omega\} \cup \{1^n 0^i: i \leq n\},
\]
so that $\P=[T]$.  Then for all $n$, 
\[
\dfrac{\lambda(\llb 0 \rrb\cap \llb T\uh 2n\rrb)}{\lambda(\llb T \uh 2n \rrb)} = \dfrac{\lambda(\llb 0 \rrb\cap \llb T\uh 2n\rrb)}{\lambda(\llb T \uh 2n \rrb)} = 1/(n+1),
\]
and thus $\lambda_T(0) = 0$.

The measure $\lambda_\P$ is in some sense more natural than the measure $\mu_\P$ considered in Section \ref{Local}.

\begin{example} Let $\P = \{00X\colon X \in \cs\}\cup\{1X\colon X\in\cs\}$. Then $\lambda_\P( 00) = \lambda_\P( 10) = \lambda_\P(11) = \frac13$,
whereas  $\mu_\P(00) = \frac12$ and $\mu_\P(10) = \mu_\P( 11) = \frac14$.
\end{example}

This example indicates the key difference between $\mu_\P$ and $\lambda_\P$:  for each $\sigma\in T_\P$, whereas $\mu_\P$ only takes into account whether $\sigma\fr 0$ and $\sigma\fr1$ are in $T_\P$ in distributing measure to these strings, $\lambda_\P$ takes into account the entire structure of $\P$ above $\sigma$.


Note that the $\Pi^0_1$ class from Example \ref{ex3} above may be seen as the disjoint union of two $\Pi^0_1$ classes $\P_0$ and $\P_1$ where $\lambda_{\P_0}$ and $\lambda_{\P_1}$ are defined but $\lambda_{\P_0 \oplus \P_1}$ is not defined. 
Note that here we have $\# T_{\P_0} \uh 4n = \# T_{\P_0} \uh (4n+2) = 4^n$, whereas $\# T_{\P_1} \uh (4n+2) = \# T_{\P_1} \uh (4n+4) = 4^{n+1}$, so that
\[
\dfrac{\#T_{\P_0} \uh (4n+2)}{\# T_{\P_1} \uh (4n+2)} = 1/4\;\;\;\text{but}\;\;\; \dfrac{\#T_{\P_0} \uh (4n+4)}{\# T_{\P_1} \uh (4n+4)}= 1. 
\]
Thus $\lim_{n \to \infty} \frac{\#T_{\P_0} \uh n }{\# T_{\P_1} \uh n}$ does not exist. 

\begin{proposition} Let $\P = \P_0 \oplus \P_1$ for some $\Pi^0_1$ classes $\P_0$ and $\P_1$ such that $\lambda_{\P_i}(\sigma)$ is defined for each $\sigma$ and for $i=0,1$. Then for each string $\tau$, $\lambda_\P(\tau)$ is defined if and only if $\lim_{n \to \infty} \frac{\#T_{\P_0} \uh n}{ \# T_{\P_1} \uh n}$ exists (including having a limit of infinity). 
\end{proposition}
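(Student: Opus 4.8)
The plan is to reduce everything to a single finite-level computation and then pass to the limit. Throughout, write $r_n = \#T_{\P_0}\uh n \,/\, \#T_{\P_1}\uh n$, and for a closed set $\Q$ abbreviate the finite-level ratio by $\lambda_\Q^{(n)}(\rho) = \lambda(\llb\rho\rrb\cap\llb T_\Q\uh n\rrb)/\lambda(\llb T_\Q\uh n\rrb)$, so that $\lambda_\Q(\rho)$ is by definition $\lim_n\lambda_\Q^{(n)}(\rho)$ and is ``defined'' precisely when this limit exists. The first thing I would record is the bookkeeping identity for the disjoint union: a string of length $m+1$ lies in $T_\P$ exactly when it has the form $0\fr\rho$ with $\rho\in T_{\P_0}\uh m$ or $1\fr\rho$ with $\rho\in T_{\P_1}\uh m$, so that $\#T_\P\uh(m+1) = \#T_{\P_0}\uh m + \#T_{\P_1}\uh m$; and since the length-$n$ nodes of any tree index pairwise disjoint cylinders of measure $2^{-n}$, we have $\lambda(\llb T_\P\uh n\rrb) = 2^{-n}\,\#T_\P\uh n$.

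Second, I would compute $\lambda_\P^{(n)}(\tau)$ for $\tau = 0\fr\sigma$ and $n=m+1\geq|\tau|$. The length-$(m+1)$ nodes of $T_\P$ extending $0\fr\sigma$ are exactly the $0\fr\rho$ with $\rho\in T_{\P_0}\uh m$ and $\sigma\preceq\rho$; counting them and dividing by $\#T_\P\uh(m+1)$ (the $2^{-(m+1)}$ factors cancel), and recognizing the count of such $\rho$ as $\lambda_{\P_0}^{(m)}(\sigma)\cdot\#T_{\P_0}\uh m$, yields the factorization
\[
\lambda_\P^{(m+1)}(0\fr\sigma) \;=\; \lambda_{\P_0}^{(m)}(\sigma)\cdot\frac{\#T_{\P_0}\uh m}{\#T_{\P_0}\uh m + \#T_{\P_1}\uh m} \;=\; \lambda_{\P_0}^{(m)}(\sigma)\cdot\frac{r_m}{r_m+1}.
\]
By symmetry $\lambda_\P^{(m+1)}(1\fr\sigma) = \lambda_{\P_1}^{(m)}(\sigma)\cdot\frac{1}{r_m+1}$, while $\lambda_\P^{(n)}(\epsilon)=1$. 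This display is the heart of the argument: it separates the \emph{internal} relative measure inside $\P_i$, which converges by hypothesis, from the \emph{mass-splitting} factor that depends only on $r_m$.

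Third, I would pass to the limit using that $f(x)=x/(x+1)$ extends to an increasing homeomorphism $[0,\infty]\to[0,1]$ with $f(\infty)=1$. For the direction assuming $\lim_n r_n = r$ exists in $[0,\infty]$: then $f(r_m)\to f(r)$ by continuity, and since each $\lambda_{\P_i}^{(m)}(\sigma)$ converges by hypothesis while $f(r_m)$ is bounded, every product above converges, so $\lambda_\P(\tau)$ is defined for every $\tau$, with value $\lambda_{\P_0}(\sigma)f(r)$, resp. $\lambda_{\P_1}(\sigma)\bigl(1-f(r)\bigr)$, resp. $1$. For the converse, it suffices to exhibit one string whose definedness forces $\lim_n r_n$ to exist, and the witness is $\tau=0$, i.e. $\sigma=\epsilon$: since $\lambda_{\P_0}^{(m)}(\epsilon)=1$ for all $m$, the factorization gives $\lambda_\P^{(m+1)}(0)=f(r_m)$, and if this converges then, $f$ being a homeomorphism, $r_m=f^{-1}(f(r_m))$ converges in $[0,\infty]$ as well.

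The main obstacle is not conceptual — the work is almost entirely bookkeeping — so the two things to get right are the indexing shift between level $n$ of $T_\P$ and level $n-1$ of the $T_{\P_i}$, and the careful treatment of the extended-real limit. In particular, in the case $r=\infty$ one must check that the conventions $f(\infty)=1$ and $1-f(\infty)=0$ are exactly what the mass-splitting factor forces, so that no $\infty/\infty$ indeterminacy is concealed; this is where the bounded form $r_m/(r_m+1)$, rather than $r_m$ itself, earns its keep.
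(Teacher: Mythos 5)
Your proof is correct, and it takes essentially the same route as the paper's: factor the level-$(n+1)$ ratio for $\P$ as the internal level-$n$ ratio inside $\P_i$ times a mass-splitting factor depending only on $r_n$, then pass to the limit. Two points of comparison are worth recording, though. First, you prove the converse direction explicitly: definedness of $\lambda_\P(0)$ forces $r_m = f^{-1}(f(r_m))$ to converge in $[0,\infty]$ because $f(x)=x/(x+1)$ is a homeomorphism onto $[0,1]$; the paper's proof only carries out the forward direction (deriving the value of $\lambda_\P(0\fr\sigma)$ assuming $L$ exists) and relegates the converse to a remark after the proposition, so your witness argument is exactly what is needed to make the stated ``if and only if'' complete. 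Second, your constant is the right one: in the paper's displayed identity the factor $\frac12$ ought to cancel, since the numerator $\lambda(\llb 0\fr\sigma\rrb\cap\llb T_\P\uh (n+1)\rrb)=\frac12\,\lambda(\llb\sigma\rrb\cap\llb T_{\P_0}\uh n\rrb)$ is halved but so is the denominator $\lambda(\llb T_\P\uh (n+1)\rrb)=\frac12\bigl(\lambda(\llb T_{\P_0}\uh n\rrb)+\lambda(\llb T_{\P_1}\uh n\rrb)\bigr)$; consequently the paper's final formula $\lambda_\P(0\fr\sigma)=\frac{\lambda_{\P_0}(\sigma)\cdot L}{2(L+1)}$ carries a spurious factor of $2$, while your $\lambda_{\P_0}(\sigma)\cdot\frac{L}{L+1}$ is what the sanity check $\P_0=\P_1=\cs$ (where $\lambda_\P(0\fr\sigma)=\frac12\lambda_{\P_0}(\sigma)$ and $L=1$) demands. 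Neither discrepancy affects the truth of the equivalence, since constant factors do not change whether a limit exists, but your write-up is the more reliable of the two.
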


\begin{proof} Without loss of generality, let $\tau = 0 \fr \sigma$. Then for each $n$, we have
\begin{align*}
 \frac{\lambda(\llb \sigma \rrb\cap \llb T_\P \uh (n+1) \rrb)}{\lambda(\llb T_\P \uh (n+1)  \rrb)} &= \frac12\cdot\frac{ \lambda(\llb \sigma \rrb \cap T_{\P_0} \uh n)}{ (\lambda(T_{\P_0} \uh n) + \lambda(T_{\P_1} \uh n)} 
&= \frac12\cdot\frac{ \lambda(\llb \sigma \rrb \cap T_{\P_0} \uh n)}{\lambda(T_{\P_0} \uh n} \cdot \frac{\lambda(T_{\P_0} \uh n} { (\lambda(T_{\P_0} \uh n) + \lambda(T_{\P_1} \uh n)}.
\end{align*}

For the first fraction above we have 
\[
\lim_{n \to \infty} \frac{ \lambda(\llb \sigma \rrb \cap T_{\P_0} \uh n)}{\lambda(T_{\P_0} \uh n)} = \lambda_{\P_0}(\sigma).
\]

For the second fraction, we have 
\[
 \frac{\lambda(T_{\P_0} \uh n)} { \lambda(T_{\P_0} \uh n) + \lambda(T_{\P_1} \uh n)} = 
 \frac{\# T_{\P_0} \uh n}{ \# T_{\P_0} \uh n + \# T_{\P_1} \uh n } .
\]

Thus if $L = \lim_{n\to \infty}\frac{ \#T_{\P_0} \uh n}{\# T_{\P_1} \uh n}$, then we have 
\[
\lim_{n \to \infty}  \frac{\lambda(T_{\P_0} \uh n)} {\lambda(T_{\P_0} \uh n) + \lambda(T_{\P_1} \uh n)} = 
\lambda_\P(0) =\frac{L}{L+1}.
\]
In particular if $L = \infty$, then this limit will be 1. 
Putting these together, we see that
\[
\lambda_{\P}(0 \fr \sigma) = \frac {\lambda_{\P_0}(\sigma) \cdot L} {2 (L+1)},
\]
where again if $L = \infty$, then $
\lambda_{\P}(0 \fr \sigma) = \frac 12 \lambda_{\P_0}(\sigma)$. 
\end{proof}

It follows from the proof that in fact, for each $\sigma$,  $\lambda_{\P}(0 \fr \sigma)$ exists if and only if $\lambda_{\P_0}(\sigma)$ exists and the limit $L$ exists and similarly for $1 \fr \sigma$. 

\begin{proposition} If $\P_0$ and $\P_1$ are $\Pi^0_1$ classes such that $\lambda_{\P_0}$ and $\lambda_{\P_1}$ are both defined and $\P = \P_0 \otimes \P_1$, then $\lambda_\P$ is also defined.
\end{proposition}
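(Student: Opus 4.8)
The plan is to exploit the product structure of $\P=\P_0\otimes\P_1$ to show that every ratio appearing in the definition of $\lambda_\P$ factors as a product of one ratio governed by $\P_0$ and one governed by $\P_1$, each of which converges by hypothesis. First I would describe $T_\P$ in terms of $T_{\P_0}$ and $T_{\P_1}$. Given a string $\rho$, let $\rho_e$ and $\rho_o$ denote its even- and odd-indexed subsequences, so that $\rho_e$ records the bits $\rho(0),\rho(2),\dots$ and $\rho_o$ records $\rho(1),\rho(3),\dots$; these are precisely the prefixes that $\rho$ forces on the two coordinates of any $X\oplus Y\succ\rho$. A direct check shows that $\rho\in T_\P$ if and only if $\rho_e\in T_{\P_0}$ and $\rho_o\in T_{\P_1}$, and, counting level by level, that
\[
\#(T_\P\uh 2N)=\#(T_{\P_0}\uh N)\cdot\#(T_{\P_1}\uh N),\qquad \#(T_\P\uh(2N+1))=\#(T_{\P_0}\uh(N+1))\cdot\#(T_{\P_1}\uh N),
\]
the asymmetry reflecting that at an odd level the left-hand (even-position) coordinate has advanced one further step.

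Second, I fix a string $\rho$ and a level $n>|\rho|$ and count the nodes $\xi\in T_\P\uh n$ extending $\rho$. Such $\xi$ are in bijection with pairs $(\alpha,\beta)$ where $\alpha$ extends $\rho_e$ in $T_{\P_0}$ and $\beta$ extends $\rho_o$ in $T_{\P_1}$, at the appropriate levels. Dividing by $\#(T_\P\uh n)$ and using the level counts above, the defining ratio factors; for even $n=2N$ this reads
\[
\frac{\lambda(\llb\rho\rrb\cap\llb T_\P\uh 2N\rrb)}{\lambda(\llb T_\P\uh 2N\rrb)}=\frac{\lambda(\llb\rho_e\rrb\cap\llb T_{\P_0}\uh N\rrb)}{\lambda(\llb T_{\P_0}\uh N\rrb)}\cdot\frac{\lambda(\llb\rho_o\rrb\cap\llb T_{\P_1}\uh N\rrb)}{\lambda(\llb T_{\P_1}\uh N\rrb)}.
\]
By hypothesis the two factors converge to $\lambda_{\P_0}(\rho_e)$ and $\lambda_{\P_1}(\rho_o)$ respectively, so the even-indexed subsequence of the defining sequence converges to $\lambda_{\P_0}(\rho_e)\cdot\lambda_{\P_1}(\rho_o)$.

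The step I expect to require the most care is showing that the odd levels contribute the \emph{same} limit, so that the limit over all $n$ exists rather than just the limit along even $n$; this is exactly where disjoint unions fail, as the preceding proposition and Example \ref{ex3} demonstrate. For $n=2N+1$ the same bijection yields
\[
\frac{\lambda(\llb\rho\rrb\cap\llb T_\P\uh(2N+1)\rrb)}{\lambda(\llb T_\P\uh(2N+1)\rrb)}=\frac{\lambda(\llb\rho_e\rrb\cap\llb T_{\P_0}\uh(N+1)\rrb)}{\lambda(\llb T_{\P_0}\uh(N+1)\rrb)}\cdot\frac{\lambda(\llb\rho_o\rrb\cap\llb T_{\P_1}\uh N\rrb)}{\lambda(\llb T_{\P_1}\uh N\rrb)},
\]
where now the first factor ranges over level $N+1$ of $T_{\P_0}$. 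The crucial point is that $\lambda_{\P_0}(\rho_e)$ is assumed to exist, so shifting the index from $N$ to $N+1$ does not change the limit of this factor; it still tends to $\lambda_{\P_0}(\rho_e)$, and the second factor again tends to $\lambda_{\P_1}(\rho_o)$. Thus the odd-indexed subsequence converges to the identical product $\lambda_{\P_0}(\rho_e)\cdot\lambda_{\P_1}(\rho_o)$. Since the even- and odd-indexed subsequences share a common limit, the full limit exists, giving $\lambda_\P(\rho)=\lambda_{\P_0}(\rho_e)\cdot\lambda_{\P_1}(\rho_o)$ for every $\rho$; hence $\lambda_\P$ is defined, as claimed. (No $L=\infty$ issue arises here, since both factors have finite limits in $[0,1]$.)
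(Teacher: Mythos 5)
Your proof is correct and follows essentially the same route as the paper's: factor the level-$n$ counting ratio for $\P = \P_0 \otimes \P_1$ into the product of the corresponding ratios for $\P_0$ and $\P_1$, then pass to the limit using the hypothesis that $\lambda_{\P_0}$ and $\lambda_{\P_1}$ exist. If anything, you are more thorough than the paper, which computes the limit only along even levels $2n$ and only for strings of the form $\sigma \oplus \tau$ with $|\sigma| = |\tau|$, handling the remaining strings with a brief appeal to these intervals forming a basis; your explicit treatment of the odd levels (where the $\P_0$-factor is merely an index shift of a convergent sequence) and of arbitrary $\rho$ via $\rho_e, \rho_o$ fills in exactly the details the paper leaves to the reader.
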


\begin{proof} For each pair $\sigma, \tau$ of strings of the same length $k$ and each $n \geq k$, the number of extensions of $\sigma \oplus \tau$ of length $2n$ in $T_\P$ is exactly the product of the number of extensions of $\sigma$ in $T_{\P_0}$ of length $n$ with the number of extensions of $\tau$ in $T_{\P_1}$ of length $n$.  Applying this fact to the empty string yields 
$\# T_\P \uh 2n = \#T_{\P_0} \uh n \cdot  \# T_{\P_1} \uh n$.  Then we have
\[ 
\frac{\lambda(\llb \sigma \oplus \tau \rrb\cap \llb T_\P \uh 2n\rrb )}{ \lambda(\llb T_\P \uh 2n)} = \frac{\lambda(\llb \sigma \rrb \cap \llb T_{\P_0} \uh n \rrb)} {\lambda(\llb T_{\P_0} \uh n \rrb)} \cdot
\frac{\lambda(\llb \sigma \rrb \cap \llb T_{\P_1} \uh n \rrb)} {\lambda(\llb T_{\P_1} \uh n \rrb)},
\]
and therefore  $\lambda_\P(\sigma \oplus \tau) = \lambda_{\P_0}(\sigma) \cdot \lambda_{\P_1}(\tau)$. Since intervals of the from $\llb \sigma \oplus \tau \rrb$ form a basis, it follows that $\lambda_\P$ is defined for all intervals. 
\end{proof}

Later we will consider some notions of homogeneity which will provide conditions on the class $\P$ under which $\lambda_{\P}$ is defined.

\subsection{Randomness with respect to the conditional measure $\lambda_\P$}

We now turn to the global definition of randomness in a $\Pi^0_1$ class $\P$.

\begin{definition}
$X$ is \emph{globally random in $\P$} if $X\in\MLR_{\lambda_\P}^{T_\P}$.  
\end{definition}

By the relativized Levin-Schnorr theoreom, $X$ is globally random in $\P$ if and only if 
\[
K^{T_\P}(X\uh n)\geq -\log\lambda_\P(X\uh n)-O(1)
\]
for all $n$.  We will make use of this characterization of global randomness in $\P$ in the ensuing discussion.

The next two examples show that these measures give rise to different notions of algorithmic randomness.  

\begin{example}  Let $\Q = \{0^{\omega}\} \cup \{1 X: X \in \cs\}$.  Then $\lambda_\Q(0^{n+1}) = \frac 1{2^n+1}$, so that $0^{\omega}$ is not random with respect to $\lambda_\Q$ random.  However, $\mu_\Q(0^{n+1}) = \frac12$, for every $n$, so that $0^{\omega}$ is random with respect to $\mu_\Q$. That is, if $0^{\omega}$ belongs to an open set $\U$, then some $n$ such that $\llb 0^{n+1} \rrb \subseteq U$, so that $\mu_\Q(\U) = \frac{1}{2}$. Hence $0^{\omega}$ must pass every $\mu_\P$-Martin-L\"of test. 
\end{example}

\begin{example}\label{ex6}  Let $\P = \{0^n1^{\omega}: n \in \omega\}\cup\{0^\omega\}$. We observe that for each $m$, $T_\P \uh m$ has exactly $m+1$ elements, namely, $0^m, 0^{m-1} 1, 0^{m-2}11, \dots, 1^m$.   Now fixing $k$, we see that  $T_\P \uh (k+n)$ has exactly $n+k+1$ elements, of which $n+1$ extend $0^{k}$.  It follows that 
\[ 
\frac{ \lambda(\llb 0^k \rrb\cap \llb T\uh (k+n)\rrb)}{\lambda(\llb T \uh (k+n) \rrb)} = \frac {n+1}{n+k+1},
\]
so that $\lambda_\P( 0^k ) = 1$ for all $k\in\omega$. Thus $0^{\omega}$ is random with respect to $\lambda_\P$.  On the other hand, we see that $\mu_\P(\{0^{n} 1^{\omega}\}) = 2^{-k-1}$ for each $k$ and that $\mu_\P(\{0^{\omega}\}) = 0$. 
Thus $\mu_\P( 0^{k}) = 2^{-k}$ for every $k\in\omega$, so that $0^{\omega}$ is not $\mu_\P$-Martin-L\"of random. 
\end{example}

Example \ref{ex6} shows that there are $\Pi^0_1$ classes $\P$ where the collection of $\lambda_\P$-random sequences is disjoint from the collection of $\mu_\P$-random sequences.  An example of such a class without isolated points is the following.

\begin{example} \label{ex7}  Let $T$ be the tree obtained by closing the set $\{00,01,11\}^{<\omega}$ downward under $\preceq$ and let $\P=[T]$.  Note that $\P$ is homeomorphic to $3^\omega$.  Now, the measure $\mu_\P$ on $\P$ is defined by
\begin{itemize}
\item[] (i) $\mu_\P(\sigma00)=\mu_\P(\sigma01)=\frac{1}{4}\mu_\P(\sigma)$, and
\item[] (ii) $\mu_\P(\sigma11)=\frac{1}{2}\mu(\sigma)$
\end{itemize}
for every $\sigma\in \{00,01,11\}^{<\omega}$.  In addition, the measure $\lambda_\P$ on $\P$ satisfies
\begin{itemize}
\item[] (iii) $\lambda_\P(\sigma00)=\lambda_\P(\sigma01)=\lambda_\P(\sigma11)=\frac{1}{3}\lambda_\P(\sigma)$
\end{itemize}
for every $\sigma\in \{00,01,11\}^{<\omega}$.

Let $\nu_0$ be the Bernoulli ($\frac{1}{4},\frac{1}{4},\frac{1}{2}$)-measure on $3^\omega$ and let $\nu_1$ be the Bernoulli ($\frac{1}{3},\frac{1}{3},\frac{1}{3}$)-measure on $3^\omega$.  For $i=0,1$, every $\nu_i$-random sequence satisfies the law of large numbers with respect to $\nu_i$, which implies in particular that
\[
\lim_{n\rightarrow\infty}\frac{\#_2(Z\uh n)}{n}=\frac{1}{3}
\]
for every $\nu_0$-random sequence $Z\in 3^\omega$ and
\[
\lim_{n\rightarrow\infty}\frac{\#_2(Z\uh n)}{n}=\frac{1}{2}
\]
for every $\nu_1$-random sequence $Z\in 3^\omega$.

Let $\Phi:\P\rightarrow 3^\omega$ be the Turing functional induced by the map from $\{00,01,11\}$ to $\{0,1,2\}$ given by:
\begin{center}
$00\mapsto 0$\\
$01\mapsto 1$\\
$00\mapsto 2$
\end{center}
Then $\Phi$ is a measure-preserving isomorphism between $(\P,\lambda_\P)$ and $(3^\omega,\nu_0)$ as well as between $(\P,\mu_\P)$ and $(3^\omega,\nu_1)$.

Suppose that there is some $X\in\P$ that is random with respect to both $\lambda_\P$ and $\mu_\P$.  Then setting $Y=\Phi(X)$, by the preservation of randomness, it follows that $Y$ is random with respect to both $\nu_0$ and $\nu_1$, which is impossible by our above remarks.
\end{example}

\section{An intermediate approach to defining randomness in a $\Pi^0_1$ class} \label{sec-Intermediate}

Let us now consider the notion of initial segment complexity for members of a given $\Pi^0_1$ class $\P$ briefly considered in Subsection \ref{subsec-isc1}.

\begin{definition} \label{def:gi}
$X\in\P$ is \emph{$T_\P$-incompressible} if
\[
K^{T_\P}(X\uh n)\geq \log\#T_\P\uh n-O(1).
\]
\end{definition}

For the previous two initial segment complexity notions we have considered, the complexity threshold for a given $\sigma\in T_\P$ is determined by (i) the branching in $\P$ along initial segments of $\sigma$ and (ii) the branching in $\P$ along extensions of $\sigma$, respectively.  For $T_\P$-incompressible sequences, the complexity threshold is now determined by the branching 
along not just initial segments of $\sigma$ but along initial segments of \emph{all} strings in $T_\P$ of length $|\sigma|$.

Recall that a sequence $X$ is complex if there is a computable, nondecreasing, unbounded function $f:\omega\rightarrow\omega$ such that
$K(X\uh n)\geq f(n)$ for all $n\in\omega$. In \cite[Theorem 2.13]{Bin08}, Binns proved that if $\P$ is a $\Pi^0_1$ class containing a complex element, then for every $Y\in\cs$, there is some $X\in\P$ such that $Y\leq_T X$ (in fact, he proved a stronger result).  In particular, such a class must be uncountable.  Using these facts, we can show: 

\begin{proposition}
If $\P$ is a countable, decidable $\Pi^0_1$ class, then no $X\in\P$ satisfies
\[
(\exists c)(\forall n)K^{T_\P}(X\uh n)\geq \log\# T_\P\uh n-c.
\]
\end{proposition}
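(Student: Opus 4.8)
The plan is to prove the statement by contraposition against Binns' theorem: I will show that any $X\in\P$ witnessing the displayed inequality would be a \emph{complex} element of $\P$, which by \cite[Theorem 2.13]{Bin08} forces $\P$ to be uncountable. So assume toward a contradiction that there exist $X\in\P$ and a constant $c$ with $K^{T_\P}(X\uh n)\geq \log\#T_\P\uh n-c$ for all $n$.

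The first step is to remove the oracle. Since $\P$ is decidable, $T_\P$ is computable, and relativizing prefix-free complexity to a computable oracle changes it by at most an additive constant, so $K^{T_\P}(\sigma)=K(\sigma)+O(1)$. Hence the hypothesis becomes the unrelativized bound $K(X\uh n)\geq \log\#T_\P\uh n-O(1)$. I would then check that $f(n)=\floor{\log\#T_\P\uh n}$ is an admissible witness to complexity. It is computable because $T_\P$ is computable, so $\#T_\P\uh n$ is a computable function of $n$; and it is nondecreasing because $T_\P$ has no dead ends, so every length-$n$ node of $T_\P$ has at least one child in $T_\P$ and therefore $\#T_\P\uh(n+1)\geq \#T_\P\uh n$.

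The step I expect to require the most care is the unboundedness of $f$, i.e.\ that the widths $\#T_\P\uh n$ tend to infinity. Here I would use a K\"onig's-lemma-style observation: distinct nodes at a common level of $T_\P$ are incomparable and each extends to an infinite path, so distinct level-$n$ nodes give distinct members of $\P$; consequently, if the widths were bounded by some $w$, then $\P$ would contain at most $w$ paths and hence be finite. For infinite $\P$ the widths are therefore unbounded, so $f$ is unbounded. Granting this, $g(n)=\max(0,\,f(n)-c')$ for a suitable constant $c'$ absorbing the $O(1)$ is computable, nondecreasing, unbounded, and satisfies $K(X\uh n)\geq g(n)$ for all $n$; thus $X$ is complex. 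By Binns' theorem a $\Pi^0_1$ class containing a complex element is uncountable, contradicting the countability of $\P$.

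Finally I would address the degenerate finite case. When $\P$ is finite the width $\#T_\P\uh n$ is eventually constant, so the threshold $\log\#T_\P\uh n$ is bounded and the notion of $T_\P$-incompressibility degenerates; the substantive content of the proposition is for infinite $\P$, where the argument above applies, and this is the reading under which the claim should be understood.
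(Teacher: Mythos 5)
Your proof is correct and follows essentially the same route as the paper's: use decidability to drop the oracle from $K^{T_\P}$, observe that $n\mapsto\log\#T_\P\uh n$ is a computable, nondecreasing, unbounded lower bound on $K(X\uh n)$, conclude that $X$ is complex, and invoke Binns' theorem to contradict countability. The only difference is that you make explicit two points the paper asserts without argument -- the K\"onig-style proof that the widths $\#T_\P\uh n$ are unbounded, and the caveat that this fails (so the statement degenerates) when $\P$ is finite, forcing ``countable'' to be read as countably infinite.
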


\begin{proof}
Since $\P$ is decidable, then $T_\P$ is computable and thus each $T_\P$-incompressible sequence $X\in\P$ satisfies
\[
K(X\uh n)\geq \log\#T_\P\uh n-O(1).
\]
Since for $c\in\omega$, the function $n\mapsto (\log\#T_\P\uh n)-c$ is computable, nondecreasing, and unbounded, it follows that such an $X$ is complex.  By the above discussion, if $\P$ contains any $T_\P$-incompressible sequence, then it contains a complex member and thus is uncountable.  This contradicts the fact that $\P$ is countable, and thus $\P$ contains no $T_\P$-incompressible sequences.
\end{proof}
%

%

The following is open:
\begin{question}
Is there an undecidable $\Pi^0_1$ class $\P$ such that no $X\in\P$ is $T_\P$-incompressible?
\end{question}


As the $\Pi^0_1$ class $\P$ from Example \ref{ex6} is countable and decidable, it follows that no $X\in\P$ is $T_\P$-incompressible.  We saw that in that case, the collection of $\lambda_\P$-random sequences in $\P$ is disjoint from the collection of $\mu_\P$-random sequences in $\P$.  It thus follows that neither $\lambda_\P$-randomness nor $\mu_\P$-randomness imply $T_\P$-incompressibility.  As we will see in the next section, being $T_\P$-incompressible does not imply being a randomly produced path for some $\Pi^0_1$ classes $\P$ (see the remarks after Corollary \ref{cor-a}).  Moreover, Example \ref{ex3} shows that $T_\P$-incompressibility does not imply globally randomness for all $\Pi^0_1$ 
classes $\P$.

To see this, letting $\P$ be the $\Pi^0_1$ class from Example \ref{ex3}, one can calculate that, for $k\in\omega$, 
\[
\# T_\P\uh(4k+1)=2^{2k+1}
\]
 and hence $\log\# T_\P\uh(4k+1)=2k+1$.
One can further calculate that for each $k$ and each $\sigma\in T_\P\uh(4k+1)$, 
\[
\theta_\P(\sigma)=2k+1=\log\#T_\P\uh |\sigma|.
\]

Lastly, for each $k\in\omega$ and each $\sigma\in T_\P\uh(4k+j)$ for $j=2,3,4$, we have
\[
|\theta_\P(\sigma)-\theta_\P(\sigma\uh (4k+1))|\leq 3
\]
and
\[
|\log\#T_\P\uh (4k+j)-\log\#T_\P\uh (4k+1)|\leq 3.
\]
Thus, for all $\sigma\in T_\P$, 
\[
\bigl|\theta_\P(\sigma)-\log\# T_\P\uh |\sigma|\bigr|\leq O(1).
\]
From this it follows that $X\in\P$ is a randomly produced path through $T_\P$ if and only if $X$ is $T_\P$-incompressible.  As $\P$ does not contain any globally random sequences, as $\lambda_\P$ is not defined, it follows that $T_\P$-incompressibility does not imply global randomness in $\P$.

%
%

Significantly, we are not aware of precisely which conditions guarantee that $T_\P$-incompressible sequences exist for a given $\Pi^0_1$ class $\P$.  However, in the next section we will identify several sufficient conditions for a $\Pi^0_1$ class to contain a $T_\P$-incompressible member.

\section{Notions of homogeneity}\label{sec-Homogeneous}

In this section, we explore various notions of homogeneity for $\Pi^0_1$ classes, with the aim of showing that all of the notions of randomness for members of $\Pi^0_1$ classes coincide for sufficiently homogeneous $\Pi^0_1$ classes.
In the next three subsections, we will first consider notions of homogeneity that have been considered in the computability-theoretic literature.  In Subsection \ref{sec6.4}, we will introduce a new notion of homogeneity, namely \emph{additive homogeneity}.

\subsection{Separating set homogeneity}  
 The first notion of homogeneity we will consider is a classic one, playing a useful role in the study of c.e.\ sets and the incompleteness phenomenon.

\begin{definition}
A $\Pi^0_1$ class $\P$ is \emph{homogeneous} if for every $\sigma, \tau\in T_\P$ such that $|\sigma|=|\tau|$, $\sigma\fr i\in T_\P$ if and only if $\tau\fr i\in T_\P$ for $i\in\{0,1\}$.
\end{definition}

Recall that a $\Pi^0_1$ class $\P$ is homogeneous if and only if there are disjoint c.e.\ sets $A$ and $B$ such that every $C\in\P$ is a separating set for $A$ and $B$, i.e., $A\subseteq C$ and $B\cap C=\emptyset$.  In this case, we write $\P=S(A,B)$.  As we will later introduce alternative notions of homogeneity for $\Pi^0_1$ classes, we will hereafter refer to homogeneity in the above sense as \emph{s.s.-homogeneity} (for \emph{separating set homogeneity}).

\begin{lemma} \label{lem-sshom1}
If $\P$ is an s.s. homogenous $\Pi^0_1$ class, then $\# T_\P\uh |\sigma|=2^{\theta_\P(\sigma)}$ for every $\sigma\in T_\P$.
\end{lemma}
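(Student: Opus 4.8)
The plan is to exploit the defining property of s.s.-homogeneity, namely that branching in $T_\P$ depends only on the \emph{length} of a node and not on the node itself. First I would observe that for any two strings $\sigma, \tau \in T_\P$ of the same length $n$, the homogeneity condition forces $\sigma$ and $\tau$ to have exactly the same number of one-step extensions in $T_\P$ (either one or two), since $\sigma\fr i \in T_\P$ iff $\tau\fr i \in T_\P$. Consequently the branching behavior at level $n$ is a global property of $T_\P$: at each level $n$, either every node of $T_\P\uh n$ branches into two children, or every node has exactly one child. Let me call a level $n$ a \emph{branching level} if the nodes at that level each have two extensions in $T_\P$.

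Next I would set up the key counting claim, which I expect to prove by induction on $|\sigma|$. For a fixed $\sigma \in T_\P$, recall that $\theta_\P(\sigma) = \#\{n : 1 \leq n \leq |\sigma| \;\&\; (\sigma\uh n)^\curvearrowright \in T_\P\}$ counts the number of initial-segment levels of $\sigma$ at which branching occurs. By s.s.-homogeneity, $(\sigma\uh n)^\curvearrowright \in T_\P$ precisely when level $n-1$ is a branching level (in the global sense above), so $\theta_\P(\sigma)$ in fact depends only on $|\sigma|$ and equals the total number of branching levels among the first $|\sigma|$ levels. The claim to establish by induction is that $\#T_\P\uh m = 2^{(\text{number of branching levels below } m)}$, i.e.\ $\#T_\P\uh m = 2^{\theta_\P(\sigma)}$ for any $\sigma \in T_\P$ with $|\sigma| = m$.

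The induction itself is routine: at a branching level the size of $T_\P\uh m$ doubles (each node acquires two children, and by homogeneity \emph{every} node does so), while at a non-branching level the size stays the same (each node has exactly one child, and no node dies since $T_\P$ has no dead ends). Thus $\#T_\P\uh{(m+1)}$ equals either $2\cdot\#T_\P\uh m$ or $\#T_\P\uh m$ according to whether level $m$ branches, and the exponent $\theta_\P$ increments by exactly one in the first case and stays fixed in the second. This matches the recurrence for $2^{\theta_\P}$, completing the induction.

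The main obstacle is conceptual rather than computational: one must be careful that homogeneity guarantees \emph{uniform} branching at each level, so that the count $\#T_\P\uh m$ really is controlled by a single exponent shared by all nodes, rather than varying from node to node. The crucial step is translating the local sibling condition ``$(\sigma\uh n)^\curvearrowright \in T_\P$'' into the statement ``level $n-1$ is a branching level for all of $T_\P$,'' which is exactly where s.s.-homogeneity is used and without which the lemma fails. Once this uniformity is in hand, the doubling argument and the identification of $\theta_\P(\sigma)$ with the number of branching levels below $|\sigma|$ are both straightforward.
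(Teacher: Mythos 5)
Your proof is correct and follows essentially the same route as the paper's: an induction on the length of $\sigma$, using s.s.-homogeneity to see that each level of $T_\P$ branches uniformly (so $\#T_\P\uh m$ doubles exactly at branching levels, where $\theta_\P$ increments by one, and is unchanged otherwise since $T_\P$ has no dead ends). Your preliminary reformulation in terms of global ``branching levels'' is just a repackaging of the paper's two-case inductive step, not a different argument.
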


\begin{proof} For the root node $\epsilon$, we have $\theta_\P(\epsilon) = 0$ and $\#T_\P \uh 0= 1$.  
Now suppose the claim holds for all strings of length $n$.  Given $\tau\in T_\P$ of length $n+1$, then $\tau=\sigma\fr i$ for some $\sigma\in T_\P$ of length $n$ and some $i\in\{0,1\}$.  We have two cases to consider.\\

\noindent \emph{Case 1:} If $\tau^\curvearrowright=\sigma\fr(1-i)\notin T_\P$, we have $\theta_\P(\tau)=\theta_\P(\sigma)$.  But since $\P$ is homogeneous, for every $\rho\in T_\P$ of length $n$, we must have $\rho\fr i\in T_\P$ and $\rho\fr(1-i)\notin T_\P$.  Thus $\#T_\P\uh |\tau|=\# T_\P\uh |\sigma|=2^{\theta_\P(\sigma)}=2^{\theta_\P(\tau)}$.\\

\noindent \emph{Case 2:} If $\tau^\curvearrowright=\sigma\fr(1-i)\in T_\P$, we have $\theta_\P(\tau)=\theta_\P(\sigma)+1$.  But since $\P$ is homogeneous, for every $\rho\in T_\P$ of length $n$, we must have $\rho\fr i\in T_\P$ for $i=0,1$.  Thus $\#T_\P\uh |\tau|=2\cdot \# T_\P\uh |\sigma|=2^{\theta_\P(\sigma)+1}=2^{\theta_\P(\tau)}$.\\

\noindent Hence by induction, the conclusion holds.
\end{proof}

\begin{lemma}\label{lem-sshom2}
If $\P$ is an s.s.  homogenous $\Pi^0_1$ class, then $\lambda_\P(\sigma)=2^{-\theta_\P(\sigma)}$ for every~$\sigma\in T_\P$.
\end{lemma}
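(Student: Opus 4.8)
The plan is to prove that for an s.s.-homogeneous $\Pi^0_1$ class $\P$, we have $\lambda_\P(\sigma)=2^{-\theta_\P(\sigma)}$ for every $\sigma\in T_\P$. The key observation is that s.s.-homogeneity forces the tree $T_\P$ to have a uniform branching structure at each level: whether a node splits depends only on its length, not on the particular node. This means all nodes at a given level $n$ have the same value of $\theta_\P$, and hence (by Lemma \ref{lem-sshom1}) the level count $\#T_\P\uh n = 2^{\theta_\P(\sigma)}$ is the same for every $\sigma\in T_\P\uh n$.

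First I would fix $\sigma\in T_\P$ with $|\sigma|=m$ and compute the limiting relative measure directly from the definition. The crucial point is to understand $\lambda(\llb\sigma\rrb\cap\llb T_\P\uh n\rrb)$ and $\lambda(\llb T_\P\uh n\rrb)$ for $n\geq m$. By s.s.-homogeneity, every node of length $m$ in $T_\P$ has exactly the same number of extensions of length $n$ in $T_\P$ — call this common number $E(m,n)$ — because the branching pattern from level $m$ to level $n$ is identical above every length-$m$ node. Consequently $\#T_\P\uh n = \#T_\P\uh m\cdot E(m,n)$, while $\llb\sigma\rrb\cap\llb T_\P\uh n\rrb$ consists of exactly the $E(m,n)$ length-$n$ extensions of $\sigma$ in $T_\P$, each contributing $\lambda$-measure $2^{-n}$. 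Therefore
\[
\frac{\lambda(\llb\sigma\rrb\cap\llb T_\P\uh n\rrb)}{\lambda(\llb T_\P\uh n\rrb)}=\frac{E(m,n)\cdot 2^{-n}}{\#T_\P\uh n\cdot 2^{-n}}=\frac{E(m,n)}{\#T_\P\uh m\cdot E(m,n)}=\frac{1}{\#T_\P\uh m}.
\]
This ratio is independent of $n$, so the limit defining $\lambda_\P(\sigma)$ exists trivially and equals $1/\#T_\P\uh m$.

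Finally I would invoke Lemma \ref{lem-sshom1}, which gives $\#T_\P\uh m=\#T_\P\uh|\sigma|=2^{\theta_\P(\sigma)}$, whence $\lambda_\P(\sigma)=2^{-\theta_\P(\sigma)}$ as desired. The main conceptual step is the level-uniformity observation that every length-$m$ node has the same number $E(m,n)$ of length-$n$ extensions; this is the direct consequence of s.s.-homogeneity that makes the ratio constant in $n$. I expect this uniformity argument to be the crux, and it can be justified by a straightforward induction on $n$ (or by appealing to the separating-set characterization $\P=S(A,B)$, under which the splitting nodes at each level are determined solely by membership of the level index in the enumerations of $A$ and $B$). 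Once level-uniformity is established, the rest is an immediate cancellation together with the cited lemma.
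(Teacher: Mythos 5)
Your proposal is correct and follows essentially the same route as the paper's proof: both establish that s.s.-homogeneity makes every length-$m$ node of $T_\P$ have the same number of length-$n$ extensions (your $E(m,n)$, the paper's $2^k$), so the ratio $\lambda(\llb\sigma\rrb\cap\llb T_\P\uh n\rrb)/\lambda(\llb T_\P\uh n\rrb)$ collapses to $1/\#T_\P\uh|\sigma|$ independently of $n$, and then both invoke Lemma \ref{lem-sshom1} to rewrite this as $2^{-\theta_\P(\sigma)}$. The only cosmetic difference is that the paper additionally notes the common extension count is a power of $2$, which your cancellation argument correctly shows is unnecessary.
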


\begin{proof}

For $n\geq |\sigma|$, for each $\sigma\in T_\P\uh n$, we consider
\[
\dfrac{\lambda(\llb\sigma\rrb\cap \llb T_\P\uh n\rrb)}{\lambda(T_\P\uh n)}.
\]
Suppose that $\sigma$ has $j$ extensions in $T_\P$ of length $n$.  Since $\P$ is homogenous, we must have $j=2^k$ for some $k\leq n-|\sigma|$.  Thus 
\begin{equation}\label{eq1}
\lambda(\llb\sigma\rrb\cap\llb T_\P\uh n\rrb)=2^{-n}2^k.
\end{equation}
 Moreover, by the homogeneity of $\P$, $\# T_\P\uh n=2^k\cdot\# T_\P\uh|\sigma|,$ so that 
\begin{equation}\label{eq2}
 \lambda(T_\P\uh n)=2^{-n}\cdot\# T_\P\uh n=2^{-n}2^k\cdot\# T_\P\uh|\sigma|.
\end{equation}
Combining (\ref{eq1}) and (\ref{eq2}) with the fact that $\# T_\P\uh|\sigma|=2^{-\theta_\P(\sigma)}$ yields
\[
\dfrac{\lambda(\llb\sigma\rrb\cap \llb T_\P\uh n\rrb)}{\lambda(T_\P\uh n)}=\dfrac{2^{-n}2^k}{2^{-n}2^k\cdot\# T_\P\uh|\sigma|}=2^{-\theta_\P(\sigma)}.
\]
Thus
\[
\lambda_\P(\sigma)=\lim_{n\rightarrow\infty}\dfrac{\lambda(\llb\sigma\rrb\cap \llb T_\P\uh n\rrb)}{\lambda(T_\P\uh n)}=\lim_{n\rightarrow\infty}2^{-\theta_\P(\sigma)}=2^{-\theta_\P(\sigma)}.
\]

\end{proof}

For an s.s.-homogeneous $\Pi^0_1$ class $\P$ and $X\in\P$, by Lemmas \ref{lem-sshom1} and \ref{lem-sshom2}, we have
\[
\log\#T_\P\uh n=\theta_\P(X\uh n)=-\log\lambda_\P(X\uh n)
\]
for each $n\in\omega$.  As an immediate consequence we have:

\begin{theorem} \label{thm-sshom}
 Let $\P$ be an s.s.-homogeneous $\Pi^0_1$ class.  The following are equivalent for $X\in\P$:
\begin{itemize}
\item[(i)] $X$ is a random path through $\P$.
\item[(ii)] $X$ is $T_\P$-Martin-L\"of random with respect to $\lambda_\P$.
\item[(iii)] $X$ is $T_\P$-incompressible.
\end{itemize}
\end{theorem}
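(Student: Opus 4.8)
The plan is to derive Theorem~\ref{thm-sshom} directly from the two preceding lemmas, which together pin down all three complexity thresholds to a single common function on the nose (no $O(1)$ slack), so that the three randomness notions become literally the same condition. Concretely, Lemma~\ref{lem-sshom1} gives $\log\#T_\P\uh n=\theta_\P(\sigma)$ for every $\sigma\in T_\P$ of length $n$, and Lemma~\ref{lem-sshom2} gives $-\log\lambda_\P(\sigma)=\theta_\P(\sigma)$ for every such $\sigma$. Hence for any $X\in\P$ and any $n$,
\[
\log\#T_\P\uh n=\theta_\P(X\uh n)=-\log\lambda_\P(X\uh n).
\]
This chain of equalities is the crux: it says the three thresholds appearing in the definitions of $T_\P$-incompressibility, randomly produced paths, and global randomness coincide exactly.

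With this identity in hand, I would establish the three equivalences by invoking the characterizations already proved earlier in the paper, each of which expresses the relevant randomness notion as an initial-segment complexity bound against one of these thresholds. First, by Theorem~\ref{thm-rpp-isc}, $X$ is a randomly produced path through $T_\P$ if and only if $K^{T_\P}(X\uh n)\geq\theta_\P(X\uh n)-O(1)$, so (i) is the statement $K^{T_\P}(X\uh n)\geq\theta_\P(X\uh n)-O(1)$. Second, by the relativized Levin--Schnorr theorem (Theorem~\ref{thm-levinschnorr}) applied to the $T_\P$-computable measure $\lambda_\P$, $X$ is $T_\P$-Martin-L\"of random with respect to $\lambda_\P$ if and only if $K^{T_\P}(X\uh n)\geq-\log\lambda_\P(X\uh n)-O(1)$, which is (ii). Third, Definition~\ref{def:gi} states that (iii) means $K^{T_\P}(X\uh n)\geq\log\#T_\P\uh n-O(1)$.

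The argument then closes immediately: substituting the displayed identity shows that the three right-hand thresholds $\theta_\P(X\uh n)$, $-\log\lambda_\P(X\uh n)$, and $\log\#T_\P\uh n$ are pointwise equal for every $n$, so the three complexity inequalities (with their respective $O(1)$ constants) are the same assertion about $X$. Therefore (i), (ii), and (iii) are equivalent, and in fact a single additive constant $c$ witnesses all three simultaneously. One subtlety worth flagging is that Lemmas~\ref{lem-sshom1} and~\ref{lem-sshom2} are stated for $\sigma\in T_\P$, so to apply them to $X\uh n$ I should note that $X\in\P$ guarantees $X\uh n\in T_\P$ for every $n$; this is exactly why the hypothesis $X\in\P$ (rather than $X\in\cs$) appears in the theorem statement.

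I do not expect any real obstacle here: the genuine work has already been done in the two lemmas, which required the full force of s.s.-homogeneity (that branching is uniform across each level, forcing $\#T_\P\uh n$ to be a power of two and making the limit defining $\lambda_\P$ exist and equal $2^{-\theta_\P(\sigma)}$). Once those identities are available, Theorem~\ref{thm-sshom} is essentially a bookkeeping corollary, as the phrase ``As an immediate consequence'' in the excerpt already signals. The only thing to be careful about is to cite the correct characterization theorem for each clause and to keep the $O(1)$ constants correctly attached to the complexity terms rather than to the thresholds.
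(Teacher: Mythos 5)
Your proposal is correct and matches the paper's own argument: the paper derives exactly the chain of equalities $\log\#T_\P\uh n=\theta_\P(X\uh n)=-\log\lambda_\P(X\uh n)$ from Lemmas \ref{lem-sshom1} and \ref{lem-sshom2} and then states the theorem as an immediate consequence. Your explicit citations of Theorem \ref{thm-rpp-isc} for (i), the relativized Levin--Schnorr theorem for (ii), and Definition \ref{def:gi} for (iii) simply spell out the bookkeeping the paper leaves implicit.
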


Finally, we observe that by Lemmas \ref{lem-sshom1} and \ref{lem-sshom2}, any s.s.-homogeneous $\Pi^0_1$ class $\P$ and any $\sigma \in T_\P\uh n$, we have $\lambda_\P(\sigma) = 1/\# T_\P \uh n$. Moreover for any clopen set $\U = \llb \sigma_1 \rrb \cup \dotsb \cup \llb \sigma_m \rrb$ such that $\sigma_i\in T_\P$ and $|\sigma_i|=n$ for $1\leq i\leq m$, we have
\[
\lambda_\P(\U) =\sum_{i=1}^k=m\cdot\lambda_\P(\sigma_i)=\frac{m}{\#T_\P\uh n}=\frac{\#T_\U\uh n}{\#T_\P\uh n}.
\]
From this it follows that for any s.s.-homeogeneous $\Pi^0_1$ class $\P$ and any $\Pi^0_1$ $\Q \subseteq \P$, we have $\Q = \bigcap_{n\in\omega} \llb T_\Q \uh n \rrb$, so that 

\[
\lambda_\P(\Q) = \lim_{n \to \infty} \lambda_\P(\llb T_\Q \uh n \rrb) = \lim_{n \to \infty} \frac{\# T_\Q \uh n}{ \#T_\P \uh n}.
\]
This latter limit exists since the sequence $(\frac{\# T_\Q \uh n}{ \#T_\P \uh n})_{n\in\omega}$ is nondecreasing and bounded from below by 0.

\subsection{$n$-homogeneity and weakly $n$-homogeneity}

Generalized c.e. separating classes and their degrees of difficulty were investigated by Cenzer and Hinman \cite{CH08}.  For example, given four disjoint c.e. sets $A_0,A_1,A_2,A_3$, consider the $\pz$ class  $\P \subseteq \{0,1,2,3\}^{\omega}$ such that $X \in \P \iff (\forall m)(\forall i \leq 3)(X(m) = i \to m \notin A_i)$.  Now the class $\P$ can be represented by a $\pz$ class $\Q$ in $\{0,1\}^{\omega}$ by letting $X \in \{0,1\}^{\omega}$ represent $Y \in \{0,1,2,3\}^{\omega}$ by replacing $Y(m)$ with  $X(2m) X(2m+1)$,  where each 0 in $Y$ is replaced by $00$, each $1$ by $01$,  each 2 by $10$, and each 3  by $11$.  Such a class $\Q$ will have the property that, for any two nodes $\sigma$ and $\tau$ in $T_\Q$ of length $2m$ and any string $\rho$ of length $2$, $\sigma \fr \rho \in T_\Q$ if and only if  $\tau \fr \rho \in T_\Q$.  We will give a more general definition here. 

\begin{definition}
Let $n\in\omega$.  Then a $\Pi^0_1$ class $\P$ is \emph{$n$-homogeneous} if for every $k\in\omega$, every $\sigma_0,\sigma_1\in T_\P\uh nk$, and every $\tau \in \{0,1\}^n$,
$\sigma_0 \fr \tau \in T_\P$ if and only if $\sigma_1 \fr \tau \in T_\P$. 
$\P$ is \emph{weakly $n$-homogeneous} if, for each $k$ and each $\sigma_0$ and $\sigma_1$ in $T_\P\uh nk$,
\[
\#T_{\P\cap\llb\sigma_0\rrb}\uh n(k+1)=\#T_{\P\cap\llb\sigma_1\rrb}\uh n(k+1).
\] 
\end{definition}
It is easy to see that if $\P$ is $n$-homogeneous, then $\P$ is weakly $n$-homogeneous. Note further that 1-homogeneity is simply s.s.-homogeneity.

Returning briefly to the notions of product and disjoint union, we note that the disjoint union of $n$-homogeneous classes need not be $n$-homogeneous,
as seen by Example \ref{ex3}. For products, the following are easy to see. 

\begin{proposition} 
\begin{itemize}
\item[(i)] If $\P$ and $\Q$ are both $n$-homogeneous, then $\P \oplus \Q$ is $2n$-homogeneous.
\item[(ii)] If $\P$ and $\Q$ are both weakly $n$-homogeneous, then $\P \oplus \Q$ is weakly $2n$-homogeneous. 
\end{itemize}
\end{proposition}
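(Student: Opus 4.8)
The operation intended here is the product $\P\otimes\Q$, not the disjoint union: the preceding sentence (``For products\ldots'') points to $\otimes$, and the disjoint-union reading is in fact false. For instance, $\P=\Q=\{00,11\}^\omega$ is $2$-homogeneous, yet with $\rho_0=0000$ and $\rho_1=0001$ at level $4$ of $T_{\P\oplus\Q}$ and $\pi=0000$ one has $\rho_0\fr\pi\in T_{\P\oplus\Q}$ but $\rho_1\fr\pi\notin T_{\P\oplus\Q}$, violating $4$-homogeneity. The reason the product is the correct operation is that the single prepended bit of a disjoint union shifts the length-$n$ homogeneity blocks of each factor to levels $\equiv -1\pmod n$, whereas interleaving carries them cleanly onto length-$2n$ blocks. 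Concretely, a node of $T_{\P\otimes\Q}$ at an even level $2m$ is exactly an interleaving $\sigma\oplus\tau$ with $\sigma\in T_\P\uh m$ and $\tau\in T_\Q\uh m$; moreover every even-length string deinterleaves uniquely as $\alpha\oplus\beta$, so appending it to $\sigma\oplus\tau$ yields $(\sigma\fr\alpha)\oplus(\tau\fr\beta)$.

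For (i), the plan is to fix $k$ and $\rho_0,\rho_1\in T_{\P\otimes\Q}\uh(2nk)$, writing $\rho_j=\sigma_j\oplus\tau_j$ with $\sigma_j\in T_\P\uh(nk)$ and $\tau_j\in T_\Q\uh(nk)$. Given $\pi\in\{0,1\}^{2n}$, deinterleave it as $\alpha\oplus\beta$ with $\alpha,\beta\in\{0,1\}^n$; then $\rho_j\fr\pi\in T_{\P\otimes\Q}$ if and only if $\sigma_j\fr\alpha\in T_\P$ and $\tau_j\fr\beta\in T_\Q$. Since $nk$ is a multiple of $n$ and $\sigma_0,\sigma_1\in T_\P\uh(nk)$, the $n$-homogeneity of $\P$ makes the condition $\sigma_j\fr\alpha\in T_\P$ independent of $j$; likewise $n$-homogeneity of $\Q$ makes $\tau_j\fr\beta\in T_\Q$ independent of $j$. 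Hence $\rho_0\fr\pi\in T_{\P\otimes\Q}$ iff $\rho_1\fr\pi\in T_{\P\otimes\Q}$, which is precisely $2n$-homogeneity.

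For (ii), I would reuse the multiplicativity of extension counts already isolated in the proof that $\lambda_{\P_0\otimes\P_1}$ is defined: with $\rho_j=\sigma_j\oplus\tau_j$ at level $2nk$, the length-$2n(k+1)$ extensions of $\rho_j$ in $T_{\P\otimes\Q}$ are exactly the interleavings of the length-$n(k+1)$ extensions of $\sigma_j$ in $T_\P$ with those of $\tau_j$ in $T_\Q$, so that
\[
\#T_{(\P\otimes\Q)\cap\llb\rho_j\rrb}\uh(2n(k+1))=\#T_{\P\cap\llb\sigma_j\rrb}\uh(n(k+1))\cdot\#T_{\Q\cap\llb\tau_j\rrb}\uh(n(k+1)).
\]
Weak $n$-homogeneity of $\P$ makes the first factor independent of $j$ and weak $n$-homogeneity of $\Q$ makes the second factor independent of $j$, so the product is independent of $j$; this is weak $2n$-homogeneity.

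The only genuine obstacle is conceptual rather than computational: recognizing that the statement holds for the product and not the disjoint union, and pinning down the exact point (the unique deinterleaving $\pi=\alpha\oplus\beta$ together with the factorization of $T_{\P\otimes\Q}$ into its two coordinate trees at even levels) at which the per-factor homogeneity hypotheses apply. Once this is in place, both parts are immediate, since the hypotheses on $\P$ and $\Q$ act independently in the two coordinates.
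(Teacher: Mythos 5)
Your proof is correct. The paper in fact offers no argument for this proposition (it is introduced with ``For products, the following are easy to see''), so there is no proof to compare against; your diagnosis that $\P \oplus \Q$ is a typo for the product $\P \otimes \Q$ is right, being forced both by that lead-in sentence and by the paper's own observation (via Example \ref{ex3}) that disjoint unions of homogeneous classes need not be homogeneous, and your $\{00,11\}^\omega$ counterexample correctly shows the literal disjoint-union statement of (i) is false. Your argument---unique deinterleaving $\pi = \alpha \oplus \beta$ together with the fact that nodes of $T_{\P\otimes\Q}$ at even levels are exactly the interleavings $\sigma\oplus\tau$ of nodes of the factor trees, plus, for (ii), the multiplicativity of extension counts that the paper isolates in its proof that $\lambda_{\P_0\otimes\P_1}$ is defined---is evidently the intended one, and both parts go through as you wrote them.
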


If $\P$ is weakly $m$-homogeneous and $\Q$ is weakly $n$-homogeneous for $m,n\in\omega$, then $\P \oplus \Q$ is \emph{almost} weakly $mn$-homogeneous, in the following sense:  For any strings $\tau_0$ and $\tau_1$ in $T_{\P \oplus \Q}$ of length $mnk+1$, $\tau_0$ and $\tau_1$ will have an equal number of extensions 
of length $mn(k+1)+1$.  The details are left to the reader. 
  
\begin{lemma} If $\P$ is $n$-homogeneous, then $\lambda_\P(\sigma)$ is defined for every $\sigma\in\str$. 
\end{lemma}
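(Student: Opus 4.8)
The plan is to reduce the limit defining $\lambda_\P(\sigma)$ to its behaviour along the \emph{block levels} $nk$ and to show that it is eventually constant there. First I would observe that for every $m\geq|\sigma|$ the quotient in the definition of $\lambda_\P$ simplifies to a purely combinatorial ratio: since $\lambda(\llb\sigma\rrb\cap\llb T_\P\uh m\rrb)=2^{-m}\cdot\#\{\rho\in T_\P\uh m:\sigma\preceq\rho\}$ and $\lambda(T_\P\uh m)=2^{-m}\cdot\#T_\P\uh m$, the common factor $2^{-m}$ cancels and
\[
\frac{\lambda(\llb\sigma\rrb\cap\llb T_\P\uh m\rrb)}{\lambda(T_\P\uh m)}=\frac{\#\{\rho\in T_\P\uh m:\sigma\preceq\rho\}}{\#T_\P\uh m}.
\]
If $\sigma\notin T_\P$ then $\sigma$ is not extendible, so the numerator is $0$ for all $m\geq|\sigma|$ and the limit is $0$; hence I may assume $\sigma\in T_\P$ and set $k_0=\lceil|\sigma|/n\rceil$.

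The structural engine is the following uniformity. For each $k$ define $S_k=\{\tau\in\{0,1\}^n:\mu\fr\tau\in T_\P\}$ for $\mu\in T_\P\uh nk$; by $n$-homogeneity this set is independent of the choice of $\mu\in T_\P\uh nk$. Because $T_\P$ is downward closed and has no dead ends, the nodes of $T_\P$ of length $nk+r$ (for $0\leq r\leq n$) lying above a fixed $\mu\in T_\P\uh nk$ are exactly the strings $\mu\fr(\tau\uh r)$ with $\tau\in S_k$: the inclusion $\supseteq$ is downward closure, and for $\subseteq$ an extendible $\mu\fr\eta$ lies on a path whose length-$(nk+n)$ initial segment witnesses some $\tau\in S_k$ with $\eta=\tau\uh r$. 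Consequently the count $c_k(r):=\#\{\tau\uh r:\tau\in S_k\}$ of such extensions depends only on $k$ and $r$, never on $\mu$.

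I would then exploit this at intermediate levels. Writing $m=nk+r$ with $0\leq r<n$ and $k\geq k_0$, each node at level $m$ sits above a unique node at level $nk$, and each node at level $nk$ contributes exactly $c_k(r)$ nodes at level $m$; moreover any $\mu\in T_\P\uh nk$ above $\sigma$ automatically satisfies $\sigma\preceq\mu$ since $nk\geq|\sigma|$. Hence both numerator and denominator factor through the common constant $c_k(r)$, which cancels:
\[
\frac{\#\{\rho\in T_\P\uh m:\sigma\preceq\rho\}}{\#T_\P\uh m}=\frac{c_k(r)\cdot\#\{\mu\in T_\P\uh nk:\sigma\preceq\mu\}}{c_k(r)\cdot\#T_\P\uh nk}=\frac{\#\{\mu\in T_\P\uh nk:\sigma\preceq\mu\}}{\#T_\P\uh nk}.
\]
So the whole sequence is controlled by the subsequence of block levels. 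Finally, passing from level $nk$ to $n(k+1)$ multiplies \emph{every} node count by $\#S_k$ (taking $r=n$ in the uniform count above), so both numerator and denominator of the block-level ratio are multiplied by the same $\#S_k$; the block-level ratio is therefore constant for all $k\geq k_0$, equal to $\#\{\mu\in T_\P\uh nk_0:\sigma\preceq\mu\}/\#T_\P\uh nk_0$. Combining the two steps, the full quotient is constant for all $m\geq nk_0$, so the limit exists and $\lambda_\P(\sigma)$ is defined.

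The only genuinely delicate step—and the one I would write most carefully—is the uniform count $c_k(r)$ at the \emph{intermediate} levels $nk+r$ with $0<r<n$, since $n$-homogeneity is by definition a statement only about full blocks $\tau\in\{0,1\}^n$. The identification of the length-$r$ nodes above $\mu$ with the length-$r$ prefixes of $S_k$ (using the tree structure and absence of dead ends) is exactly what bridges this gap; once it is in place, the cancellation of $c_k(r)$ and the stabilization across block levels are routine bookkeeping.
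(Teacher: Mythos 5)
Your proof is correct and follows essentially the same route as the paper's: both rest on the fact that $n$-homogeneity makes the number of extensions at each level uniform across all nodes of a given block level $nk$, so the defining ratio for $\lambda_\P(\sigma)$ is eventually constant (equal to $\#\{\mu\in T_\P\uh nk:\sigma\preceq\mu\}/\#T_\P\uh nk$ for $nk\geq|\sigma|$). The paper's version is terser, simply asserting the resulting values of $\lambda_\P(\sigma)$ at block and intermediate levels, whereas you spell out the bridging count $c_k(r)$ at intermediate levels and the cancellation — details the paper leaves implicit.
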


\begin{proof} Since $\P$ is $n$-homogeneous, it follows that, for each $k$ and each $\sigma \in T_\P \cap \{0,1\}^{nk}$, 
\[
\lambda_\P(\sigma) = \frac{1}{\#T_{\P \cap \llb \sigma \rrb} \uh nk}.
\] Now suppose $|\sigma| = nk +i$ where $0 < i < n$, and let $m$ be the number of extensions in $T_\P$ of $\sigma$ 
of length $n(k+1)$.  Then 
\[
\lambda_\P(\sigma) = \frac{m}{\#T_{\P\cap \llb \sigma \rrb} \uh n(k+1)}.
\] 
\end{proof}

Note that by Example \ref{ex3}, $\lambda_\P$ is not necessarily defined for weakly $n$-homogeneous classes.  In the case that $\P$ is weakly $n$-homogeneous and $\lambda_\P$ is defined everywhere, we have the following.

\begin{theorem} Let $\P$ be a weakly $n$-homogeneous $\pz$ class $\P$ for some $n\in\omega$ and suppose that $\lambda_\P$ is defined.
 Then for any $X \in \P$, $X$ is globally random in $\P$ if and only if $X$ is $T_\P$-incompressible.
\end{theorem}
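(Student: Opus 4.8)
The plan is to reduce both notions to their relativized Levin--Schnorr characterizations and then show that the two complexity thresholds involved agree up to an additive constant. By the relativized Levin--Schnorr theorem, $X$ is globally random in $\P$ precisely when $K^{T_\P}(X\uh m)\geq -\log\lambda_\P(X\uh m)-O(1)$, whereas $X$ is $T_\P$-incompressible precisely when $K^{T_\P}(X\uh m)\geq \log\#T_\P\uh m-O(1)$. Thus it suffices to exhibit a constant $c$, depending only on $n$, such that
\[
\bigl|{-\log\lambda_\P(\sigma)}-\log\#T_\P\uh|\sigma|\bigr|\leq c
\]
for every $\sigma\in T_\P$; the equivalence then follows at once by absorbing $c$ into the two $O(1)$ terms. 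This mirrors the s.s.-homogeneous case (Theorem \ref{thm-sshom}), where the thresholds coincide exactly.

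First I would treat levels that are multiples of $n$. Writing $b_i$ for the common number of length-$n(i+1)$ extensions of an arbitrary node at level $ni$ (well defined by weak $n$-homogeneity), one gets $\#T_\P\uh(nj)=\#T_\P\uh(nk)\cdot\prod_{i=k}^{j-1}b_i$, while any fixed $\sigma\in T_\P\uh(nk)$ has exactly $\prod_{i=k}^{j-1}b_i$ extensions of length $nj$. Substituting these into the ratio defining $\lambda_\P$ along the subsequence $m=nj$, the products cancel and the ratio is identically $1/\#T_\P\uh(nk)$; since $\lambda_\P$ is assumed defined, the full limit equals this value, so $\lambda_\P(\sigma)=1/\#T_\P\uh(nk)$ and the threshold identity holds exactly at these levels.

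Next I would handle an intermediate node $\sigma\in T_\P\uh(nk+i)$ with $0<i<n$. The same cancellation along $m=nj$ yields $\lambda_\P(\sigma)=m_\sigma/\#T_\P\uh(n(k+1))$, where $m_\sigma$ is the number of length-$n(k+1)$ extensions of $\sigma$ in $T_\P$; as $\sigma$ has $n-i$ free bits before the next multiple of $n$, we have $1\leq m_\sigma\leq 2^{n-i}$, hence $0\leq\log m_\sigma\leq n-i$. Since $T_\P$ has no dead ends and each node at level $nk+i$ has at most $2^{n-i}$ extensions at level $n(k+1)$, we also get $0\leq \log\#T_\P\uh(n(k+1))-\log\#T_\P\uh(nk+i)\leq n-i$. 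Combining $-\log\lambda_\P(\sigma)=\log\#T_\P\uh(n(k+1))-\log m_\sigma$ with these two bounds gives $\bigl|{-\log\lambda_\P(\sigma)}-\log\#T_\P\uh|\sigma|\bigr|\leq n$, the desired uniform bound with $c=n$.

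The hard part will be the intermediate-level bookkeeping: weak $n$-homogeneity controls branching only between consecutive multiples of $n$, so the crux is to show that the discrepancy accrued at levels strictly between $nk$ and $n(k+1)$ stays bounded independently of $k$. The hypothesis that $\lambda_\P$ is defined is used essentially, since it is what licenses evaluating the limit along the convenient subsequence $m=nj$; without it the threshold $-\log\lambda_\P$ need not even exist, as Example \ref{ex3} shows.
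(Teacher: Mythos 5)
Your proof is correct and follows essentially the same route as the paper's: both evaluate the defining ratio of $\lambda_\P(\sigma)$ along the subsequence of levels that are multiples of $n$ (where weak $n$-homogeneity forces $\lambda_\P(\sigma)=1/\#T_\P\uh|\sigma|$ exactly, using that the assumed existence of the limit lets one compute it along this subsequence), bound the discrepancy at intermediate levels by an additive constant depending only on $n$, and conclude via the relativized Levin--Schnorr characterizations of the two notions. Your exact evaluation $\lambda_\P(\sigma)=m_\sigma/\#T_\P\uh(n(k+1))$ at intermediate levels is a slightly sharper version of the paper's sandwich bounds $\frac{1}{c_1\cdots c_{j+1}}\leq\lambda_\P(\sigma)\leq\frac{1}{c_1\cdots c_j}$, but the structure of the argument is otherwise identical.
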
 

\begin{proof} Suppose that $\P$ is weakly $n$-homogeneous. Then there exists a sequence $c_1,c_2,\dotsc$ of positive integers such that, for each $k$ and each $\sigma \in T_\P \cap  \{0,1\}^{nk}$, $\sigma$ has exactly $c_k$ extensions in $T_\P$ of length $n(k+1)$. Fix $j$ and  a string $\sigma \in T_\P \cap  \{0,1\}^{nj}$.  Then for any $k > j$,
$T_\P \uh kn$ has exactly $c_1 c_2 \cdots c_k$ elements, of which $c_{j+1} c_{j+2} \cdots c_k$ are extensions of $\sigma$.  It follows that 
\[
\dfrac{\lambda(\llb\sigma\rrb\cap \llb T_\P\uh nk \rrb)}{\lambda(T_\P\uh nk)} = \frac{1}{c_1 c_2 \cdots c_j}. 
\]
Given that $\lambda_\P(\sigma)$ exists, this means that $\lambda_\P(\sigma) = \frac{1}{c_1 c_2 \cdots c_j}$. 
Thus $\lambda_\P(\sigma) = \frac{1}{\#T_\P \uh |\sigma|}$.

Next suppose that $nj < |\sigma|  < n(j+1)$.  It is clear that 
\begin{equation}\label{eq-weakhom1}
c_1 \cdots c_j \leq \#T_\P \uh |\sigma| \leq c_1 \cdots c_{j+1}.
\end{equation}
Now observe that $\sigma$ has at most $c_{j+1}$ extensions in $T_\P$ of length $n(j+1)$. 
It follows that for all $k > j$, 
\[
\frac{1}{c_1 \cdots c_{j+1}}  \leq \dfrac{\lambda(\llb\sigma\rrb\cap \llb T_\P\uh nk \rrb)}{\lambda(T_\P\uh nk)} \leq  \frac{1}{c_1 c_2 \cdots c_j}.
\]
Thus, given that $\lambda_\P(\sigma)$ exists, we have 
\begin{equation}\label{eq-weakhom2}
\frac{1}{c_1 \cdots c_{j+1}} \leq \lambda_\P(\sigma) \leq  \frac{1}{c_1 c_2 \cdots c_j}.
\end{equation}
Putting inequalities (\ref{eq-weakhom1}) and (\ref{eq-weakhom2}) together, we have
\[
\frac{\# T_\P \uh |\sigma|}{c_{j+1}} \leq \frac{1}{\lambda_\P(\sigma)} \leq c_{j+1}\cdot \#T_\P \uh |\sigma|.
\]

Noting that each $c_{j+1} \leq 2^{n+1}$, and taking logarithms, we obtain
\[
\log \# T_\P \uh |\sigma| -(n+1) \leq - \log \lambda_\P (\sigma) \leq \log \#T_\P \uh |\sigma|+(n+1).
\]
Since $n$ is a fixed constant, it follows from the Levin-Schnorr Theorem \ref{thm-levinschnorr} and Definition \ref{def:gi}
that an element $X$ of $\P$  is globally random in $\P$ if and only if $X$ is $T_\P$- incompressible. 
\end{proof}

\begin{corollary}  \label{cor-a}
For any $n$-homogeneous $\pz$ class $\P$, and any element $X$ of $\P$, $X$ is globally random in $\P$ if and only 
$X$ is $T_\P$-incompressible. 
\end{corollary}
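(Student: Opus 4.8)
The plan is to derive this corollary directly from the preceding theorem on weakly $n$-homogeneous classes by verifying that any $n$-homogeneous $\pz$ class satisfies both hypotheses of that theorem. The theorem states that for a weakly $n$-homogeneous $\pz$ class $\P$ for which $\lambda_\P$ is defined, an element $X \in \P$ is globally random in $\P$ if and only if it is $T_\P$-incompressible. So the only work is to confirm that the two standing assumptions—weak $n$-homogeneity and the existence of $\lambda_\P$—both hold for an $n$-homogeneous class.

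First I would invoke the observation recorded immediately after the definition of $n$-homogeneity, namely that $n$-homogeneity implies weak $n$-homogeneity. This is immediate from the definitions: if for every $k$, every pair $\sigma_0,\sigma_1 \in T_\P\uh nk$, and every $\tau\in\{0,1\}^n$ we have $\sigma_0\fr\tau\in T_\P \iff \sigma_1\fr\tau\in T_\P$, then $\sigma_0$ and $\sigma_1$ have exactly the same set of one-block extensions, hence in particular the same number of extensions of length $n(k+1)$, which is precisely the weak $n$-homogeneity condition.

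Next I would invoke the earlier lemma asserting that if $\P$ is $n$-homogeneous, then $\lambda_\P(\sigma)$ is defined for every $\sigma\in\str$. This supplies the second hypothesis of the theorem. With both hypotheses in hand, the theorem applies verbatim to $\P$, yielding the claimed equivalence between global randomness in $\P$ and $T_\P$-incompressibility for every $X\in\P$.

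There is no genuine obstacle here, as the statement is an immediate specialization: the content lies entirely in the preceding theorem, and the corollary simply packages the facts that $n$-homogeneity is the stronger of the two homogeneity notions and that it guarantees $\lambda_\P$ is everywhere defined. If anything, the only point deserving a sentence of care is making explicit that weak $n$-homogeneity alone does not suffice to apply the theorem—one genuinely needs $\lambda_\P$ to be defined, as Example \ref{ex3} shows it can fail for weakly $n$-homogeneous classes—and that it is exactly the $n$-homogeneity hypothesis that rules this out via the cited lemma.
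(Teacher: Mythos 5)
Your proposal is correct and matches the paper's intended argument exactly: the paper presents this as an immediate corollary of the preceding theorem on weakly $n$-homogeneous classes, relying on precisely the two facts you cite---that $n$-homogeneity implies weak $n$-homogeneity, and that $n$-homogeneity guarantees $\lambda_\P$ is defined everywhere (the earlier lemma). Nothing further is needed.
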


%

Note that the $\Pi^0_1$ class from Example \ref{ex7} is 2-homogeneous and thus weakly 2-homogeneous, from which it follows that the randomly produced paths in a weakly $2$-homogeneous class $\P$ need not coincide with the $T_\P$-incompressible members of $\P$ (in fact, as in Example \ref{ex7}, these two classes many even be disjoint).  By the comments following Example \ref{ex7}, we can extend this observation to any $n$-homogeneous class for $n\geq 2$.

\subsection{Van Lambalgen's notions of multiplicative homogeneity}

In an attempt to find the broadest notion of homogeneity for $\Pi^0_1$ classes for which the corresponding analogue of Theorem \ref{thm-sshom} holds, we will next consider a notion of homogeneity due to van Lambalgen \cite{Van87}. The idea behind this notion is that a $\Pi^0_1$ class $\P$ is homogeneous if the amount of branching in the subclasses of $\P$ does not differ too much from the amount of branching in the class $\P$ as a whole.    

\begin{definition}[Van Lambalgen \cite{Van87}]  Let $\P$ be a $\Pi^0_1$ class.  Then $\P$ is \emph{VL-homogeneous} if there is some constant $c\in\omega$ such that the following two conditions are satisfied:
\begin{itemize}
\item[(i)] for every $\Pi^0_1$ subclass $\Q\subseteq\P$, for every $n$ and every $k\geq n$,
\[ \tag{$\dagger$}
\dfrac{\# T_\Q\uh k}{\# T_\Q\uh n}\leq c\cdot\dfrac{\# T_\P\uh k}{\# T_\P\uh n};
\]
\item[(ii)] for every $\sigma\in T_\P$, if we set $\Q=\llb\sigma\rrb\cap\P$, we have, for every $k\geq n\geq |\sigma|$,
\[\tag{$\dagger\dagger$}
\dfrac{\# T_\P\uh k}{\# T_\P\uh n}\leq c\cdot\dfrac{\# T_\Q\uh k}{\# T_\Q\uh n}.
\]
\end{itemize}
\end{definition}

Given that we require the amount of branching in subclasses of $\P$ to be within a multiplicative constant of the amount of branching in $\P$, we refer to these notions as notions of \emph{multiplicative} homogeneity.   We now prove two lemmas that simplify the verification that a given $\Pi^0_1$ class is VL-homogeneous.

\begin{lemma} \label{lem-vlhom1}
Suppose that condition $(\dagger)$ holds for a fixed $c\in\omega$ and every clopen subset $\U$ of $\P$. Then $(\dagger)$ holds for any closed set $\Q$ of $\P$.
\end{lemma}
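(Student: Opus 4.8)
The plan is to approximate the closed set $\Q$ from above by a sequence of clopen subsets of $\P$ and to exploit the fact that the level-$k$ branching of a fine enough approximation agrees exactly with that of $\Q$. Concretely, for each $m$ I would set $\U_m=\llb T_\Q\uh m\rrb\cap\P$. Since $T_\Q\uh m$ is a finite set of strings, $\llb T_\Q\uh m\rrb$ is clopen in $\cs$, so $\U_m$ is a clopen subset of $\P$; moreover $\Q\subseteq\U_m$, so $\U_m$ is nonempty whenever $\Q$ is, and the quantities appearing in $(\dagger)$ are well defined.

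The key step is the cross-section identity $T_{\U_m}\uh k=T_\Q\uh k$ for every $k\leq m$. First I would record the inclusion $T_\Q\uh k\subseteq T_{\U_m}\uh k$: if $\sigma\in T_\Q\uh k$, then some $X\in\Q$ satisfies $\sigma\prec X$, and since $\Q\subseteq\P$ this same $X$ lies in $\llb\sigma\rrb\cap\U_m$, witnessing $\sigma\in T_{\U_m}\uh k$. For the reverse inclusion I would use that $T_\Q$ is a tree: if $\sigma\in T_{\U_m}\uh k$, then some $Y\in\U_m$ extends $\sigma$, so $Y\uh m\in T_\Q\uh m$, and since $k\leq m$, downward closure of $T_\Q$ gives $\sigma=(Y\uh m)\uh k\in T_\Q\uh k$. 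Hence $\#T_{\U_m}\uh k=\#T_\Q\uh k$ for all $k\leq m$.

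With this identity in hand the lemma follows at once. Fixing $n$ and $k\geq n$, I would choose any $m\geq k$ and apply the hypothesis to the clopen set $\U_m$ to obtain
\[
\frac{\#T_{\U_m}\uh k}{\#T_{\U_m}\uh n}\leq c\cdot\frac{\#T_\P\uh k}{\#T_\P\uh n}.
\]
Since $n\leq k\leq m$, the left-hand ratio equals $\frac{\#T_\Q\uh k}{\#T_\Q\uh n}$ by the identity above, and this is exactly $(\dagger)$ for $\Q$.

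The only genuine content is the cross-section identity, and the single place that requires care is the reverse inclusion: it is precisely the downward closure of the extendible-node tree $T_\Q$ that guarantees restricting a length-$m$ node back to length $k$ lands in $T_\Q\uh k$ and not merely in some larger tree. Beyond this piece of bookkeeping I do not anticipate any serious obstacle.
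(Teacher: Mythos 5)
Your proof is correct and follows essentially the same route as the paper: the paper also approximates $\Q$ by the clopen set generated by a cross-section of $T_\Q$ (it takes exactly $\U=\llb T_\Q\uh k\rrb$ for the level $k$ in question, where you allow any depth $m\geq k$) and uses the identity $T_\U\uh k=T_\Q\uh k$, $T_\U\uh n=T_\Q\uh n$ to transfer $(\dagger)$ from $\U$ to $\Q$. Your write-up is if anything slightly more careful, since you intersect with $\P$ so that the approximating set is genuinely a clopen subset of $\P$, and you spell out the downward-closure argument the paper leaves implicit.
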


\begin{proof} Given $\Q$, $n$ and $k \geq n$, let $\U = \llb T_\Q \uh k \rrb$.  Then $T_\Q \uh k = T_\U \uh k$ and also $T_\Q \uh n = T_\U \uh n$. It follows that 
\[
\dfrac{\# T_\U \uh k}{\# T_\P \uh k} = \dfrac{\# T_\Q \uh k}{\# T_\P\uh k} \leq c\cdot\dfrac{\# T_\Q \uh n}{\# T_\P\uh n} =  c\cdot\dfrac{\# T_\U \uh n}{\# T_\P \uh n},
\]
from which $(\dagger)$ follows.
\end{proof}

\begin{lemma} \label{lem-vlhom2}
Suppose there is a fixed $c\in\omega$ such that condition $(\dagger)$ holds for any $\pz$ $\Q$ and any $n,k\in\omega$ that together satisfy $\#T_\Q \uh n = 1$ and $k \geq n$. Then condition $(\dagger)$ holds for all closed sets $\Q \subseteq \P$. 
\end{lemma}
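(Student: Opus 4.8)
The plan is to deduce the general bound from the single-node bound by splitting the tree $T_\Q$ at level $n$ into the finitely many subtrees sitting below its length-$n$ nodes and then summing the single-node estimates. Fix a closed $\Q \subseteq \P$ together with $n$ and $k \geq n$; the goal is
\[
\frac{\#T_\Q \uh k}{\#T_\Q \uh n} \leq c \cdot \frac{\#T_\P \uh k}{\#T_\P \uh n}.
\]
Since the quantities appearing here depend only on $T_\Q \uh n$ and $T_\Q \uh k$, I would first replace $\Q$ by the $\pz$ subclass $\P \cap \llb T_\Q \uh k \rrb$, whose extendible tree has no dead ends and hence agrees with $T_\Q$ through level $k$; exactly as in Lemma \ref{lem-vlhom1}, this lets me assume without loss of generality that $\Q$ is a $\pz$ class.

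With $\Q$ a $\pz$ class, write $T_\Q \uh n = \{\sigma_1, \dots, \sigma_m\}$, so that $m = \#T_\Q \uh n$, and for each $i \leq m$ put $\Q_i = \Q \cap \llb \sigma_i \rrb$. Each $\Q_i$ is again a $\pz$ subclass of $\P$, and since $\Q_i \subseteq \llb \sigma_i \rrb$ its only length-$n$ extendible node is $\sigma_i$ itself, so $\#T_{\Q_i} \uh n = 1$. This is precisely the configuration to which the hypothesis applies, giving for each $i$
\[
\#T_{\Q_i} \uh k = \frac{\#T_{\Q_i} \uh k}{\#T_{\Q_i} \uh n} \leq c \cdot \frac{\#T_\P \uh k}{\#T_\P \uh n}.
\]

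The second ingredient is the additivity $\#T_\Q \uh k = \sum_{i=1}^m \#T_{\Q_i} \uh k$. Every $\tau \in T_\Q \uh k$ has its length-$n$ prefix in $T_\Q \uh n$, so $\tau \uh n = \sigma_i$ for a unique $i$, and then $\llb \tau \rrb \subseteq \llb \sigma_i \rrb$ forces $\tau \in T_{\Q_i}$; conversely the distinct $\sigma_i$ generate disjoint intervals, so no $\tau$ is counted twice. Summing the single-node bounds over $i$ and invoking this identity gives $\#T_\Q \uh k \leq m \cdot c \cdot (\#T_\P \uh k / \#T_\P \uh n) = \#T_\Q \uh n \cdot c \cdot (\#T_\P \uh k / \#T_\P \uh n)$, and dividing through by $\#T_\Q \uh n$ recovers $(\dagger)$ for $\Q$, with the \emph{same} constant $c$.

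I do not expect any genuinely hard step here: the content is entirely combinatorial. The one place that rewards attention is the additivity identity, which hinges on distinct length-$n$ nodes spanning disjoint intervals so that the level-$k$ count splits exactly; the reduction to $\pz$ classes is a minor technical convenience ensuring that each piece $\Q_i$ still falls under the hypothesis.
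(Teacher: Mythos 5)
Your proof is correct and follows essentially the same route as the paper's: split $\Q$ at level $n$ into the pieces below each $\sigma_i$ (each having a single extendible node at level $n$), apply the single-node bound to each piece, and sum using disjointness of the intervals $\llb\sigma_i\rrb$. The only cosmetic difference is that the paper invokes Lemma \ref{lem-vlhom1} to reduce to clopen subsets of $\P$ before splitting, whereas you inline an equivalent reduction to the $\pz$ class $\P\cap\llb T_\Q\uh k\rrb$; both serve the same purpose of ensuring each piece falls under the hypothesis.
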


\begin{proof} By Lemma \ref{lem-vlhom1}, it suffices to show that condition $(\dagger)$ holds for all clopen $\U\subseteq\P$.  Fixing $n$ and $k$, let $T_\U \uh n = \{\sigma_1, \dots, \sigma_m\}$ and let $\U_i = \llb \sigma_i \rrb \cap \U$
and $T_i = T_{\U_i}$ for $i = 1,\dots,m$. Since $T_i \uh n$ and $T_j \uh n$ are disjoint for $i \neq j$, it follows that  $T_i \uh k$ and $T_j \uh k$ are also disjoint for $i \neq j$. Then by assumption, for each $i$, 
\[
\dfrac{\# T_i \uh k}{\# T_\P\uh k}\leq c \cdot \dfrac{\# T_i \uh n}{\# T_\P\uh n}.
\]
It follows that
\[
\dfrac{\# T_\U \uh k}{\# T_\P \uh k} = \dfrac{\sum_{i = 1}^m \#T_i \uh k}{\# T_\P \uh k}      \leq c \cdot \dfrac{\sum_{i = 1}^m \#T_i \uh n}{\# T_\P \uh n}  = c \cdot
\dfrac{\# T_\U \uh n}{\# T_\P\uh n}.
\]
\end{proof} 

As observed by van Lambalgen, the satisfaction of the condition ($\dagger$) for a $\Pi^0_1$ class $\P$ does not rule out the possibility that $\P$ has an isolated point, as ($\dagger$) only guarantees that the branching in any $\Pi^0_1$ subclass of $\P$ does not exceed that in $\P$ (up to a multiplicative constant).  The additional condition ($\dagger\dagger$) thus guarantees that VL-homogeneous classes do not contain isolated points.

Clearly $\cs$ itself is VL-homogeneous.  We obtain many more examples from the following:

\begin{theorem} \label{thm-weaktovl} For any $\Pi^0_1$ class $\P$ and any $n\in\omega$, if $\P$ is weakly $n$-homogeneous, then $\P$ is VL-homogeneous. 
\end{theorem}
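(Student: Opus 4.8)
The plan is to exhibit a single constant---namely $c = 2^{2n}$---that witnesses both $(\dagger)$ and $(\dagger\dagger)$. The starting point is the uniform branching provided by weak $n$-homogeneity: for each $k$ there is a number $b_k$ with $1 \le b_k \le 2^n$ such that every $\sigma \in T_\P \uh nk$ has exactly $b_k$ extensions in $T_\P$ of length $n(k+1)$ (this is the sequence $c_1,c_2,\dots$ appearing in the proof of the preceding theorem). Consequently $\# T_\P \uh nk = b_0 b_1 \cdots b_{k-1}$, and more importantly, for any fixed $\sigma$ and any block boundaries $na, nb \ge |\sigma|$ with $a \le b$, the number of extensions of $\sigma$ to level $nb$ equals $b_a b_{a+1}\cdots b_{b-1} = \# T_\P \uh nb / \# T_\P \uh na$ times the number of its extensions to level $na$; that is, \emph{at block-aligned levels the branching above any node matches the branching of the whole tree exactly}. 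I would combine this with two elementary monotonicity facts, both consequences of $T_\P$ having no dead ends: the maps $\ell \mapsto \# T_\P \uh \ell$ and $\ell \mapsto \#(\text{extensions of a fixed }\sigma\text{ to level }\ell)$ are nondecreasing, and crossing a partial block (of length less than $n$) changes either count by a factor between $1$ and $2^n$.

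For $(\dagger)$, I would first invoke Lemmas \ref{lem-vlhom1} and \ref{lem-vlhom2} to reduce to a subclass $\Q$ with $\# T_\Q \uh p = 1$ (I write the generic levels in $(\dagger)$ as $p \le q$ to avoid clashing with the block length $n$); here $\Q \subseteq \llb \sigma \rrb$ for the unique $\sigma \in T_\Q \uh p$, so it suffices to bound the number $E(q)$ of extensions of $\sigma$ in $T_\P$ to length $q$ by $2^{2n}\,\# T_\P \uh q / \# T_\P \uh p$. Writing $a = \lceil p/n\rceil$ and $b = \lfloor q/n\rfloor$, I would factor $E(q)$ through the block boundaries $na$ and $nb$: the number of extensions from level $p$ to $na$ is at most $2^{n}$, the branching from $na$ to $nb$ is exactly $\# T_\P \uh nb / \# T_\P \uh na$ by uniformity, and the branching from $nb$ to $q$ is at most $2^{n}$, whence $E(q) \le 2^{2n}\,\#T_\P\uh nb/\#T_\P\uh na$. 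Since $\#T_\P\uh nb \le \#T_\P\uh q$ and $\#T_\P\uh na \ge \#T_\P\uh p$ by monotonicity and the choice of $a,b$, the ratio $\#T_\P\uh nb/\#T_\P\uh na$ is bounded above by $\#T_\P\uh q/\#T_\P\uh p$, giving the claim. In the degenerate case where no multiple of $n$ lies in $[p,q]$ one has $q - p < n$, and then $E(q) \le 2^{q-p} \le 2^n \le 2^n\,\#T_\P\uh q/\#T_\P\uh p$ directly.

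Condition $(\dagger\dagger)$ is handled by the same sandwiching applied directly, since it already concerns the single subclass $\Q = \llb\sigma\rrb\cap\P$; writing $E(\ell) = \#T_\Q\uh \ell$ for $\ell \ge |\sigma|$, I would take $b = \lfloor q/n\rfloor$ and $a = \lceil p/n\rceil$ (both $\ge |\sigma|$ since $p \ge |\sigma|$) and bound $\#T_\P\uh q \le 2^n\,\#T_\P\uh nb$ and $\#T_\P\uh p \ge 2^{-n}\,\#T_\P\uh na$, so that $\#T_\P\uh q/\#T_\P\uh p \le 2^{2n}\,\#T_\P\uh nb/\#T_\P\uh na = 2^{2n}\,E(nb)/E(na)$, the last equality again by block-aligned uniformity. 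Monotonicity of $E$ then yields $E(nb)/E(na) \le E(q)/E(p) = \#T_\Q\uh q/\#T_\Q\uh p$, as required, the degenerate case being again immediate. The main obstacle, and the only delicate point, is precisely these partial blocks: weak $n$-homogeneity controls branching only over full blocks of length $n$, so away from multiples of $n$ there is no uniformity and one must absorb the discrepancy into the constant---the key being that each partial block at either end of $[p,q]$ contributes a factor of at most $2^n$, so the single constant $c = 2^{2n}$ suffices for both conditions simultaneously.
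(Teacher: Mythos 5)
Your proof is correct and follows essentially the same strategy as the paper's: both exploit the uniform block branching (the sequence $c_1,c_2,\dots$, your $b_k$) to get exact control of ratios at levels divisible by $n$, absorb the two partial blocks at the ends of $[p,q]$ into a factor of $2^n$ each, and arrive at the same witnessing constant $c=2^{2n}$ for both $(\dagger)$ and $(\dagger\dagger)$. The only organizational difference is that you route $(\dagger)$ through Lemma \ref{lem-vlhom2} (reducing to subclasses with a single node at the lower level), whereas the paper bounds $\# T_\Q\uh q/\#T_\Q\uh p$ directly for an arbitrary $\Q\subseteq\P$ via per-node extension counts; this is a cosmetic reorganization, as the underlying counting argument is identical.
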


\begin{proof} Let $\P$ be weakly $n$-homogeneous and let $c_1,c_2,\dots$ be given so that for any string $\tau$ of length $mk$ in $T_\P$,
$\tau$ has exactly $c_k$ extensions in $T_\P$ of length $m(k+1)$. Let $\Q \subseteq \P$ and let $i \in \omega$ and $s,t < m$.

Then we have the following inequalities:

\[
c_k c_{k+1} \dotsb c_{k+i-1} \leq  \frac{\# T_{\P} \uh (m(k+i) + t)}{\# T_{\P} \uh (mk+s)}  \leq 2^{m-s} c_k c_{k+1} \dotsb c_{k+i-1} 2^t \leq 2^{2m} c_k c_{k+1} \dotsb c_{k+i-1}
\]
and
\[
\frac{\# T_{\Q} \uh (m(k+i) + t)}{\# T_{\Q} \uh (mk+s)}  \leq 2^{m-s} c_k c_{k+1} \dotsb c_{k+i-1} 2^t.
\]
It follows that
\[
\frac{\# T_{\Q} \uh (m(k+i) + t)}{\# T_{\Q} \uh (mk+s)} \leq 2^{2m} \cdot \frac{\# T_{\P} \uh (m(k+i) + t)}{\# T_{\P} \uh (mk+s)}.
\]
Thus the condition $(\dagger)$ is satisfied with constant $c = 2^{2m}$, so that  $\P$ is VL-homogeneous.

Now suppose that $\Q = \llb \sigma \rrb \cap \P$ for some string $\sigma \in T_\P$ and let $k$ satisfy $|\sigma| \leq mk$. Then for $i,s,t$ as above,
\[
c_k c_{k+1} \dotsb c_{k+i-1} \leq  \frac{\# T_{\Q} \uh (m(k+i) + t)}{\# T_{\Q} \uh mk+s)},
\]
so that by the first inequality above, 
\[
\frac{\# T_{\P} \uh (m(k+i) + t)}{\# T_{\P} \uh (mk+s)} \leq  2^{2m} \frac{\# T_{\Q} \uh (m(k+i) + t)}{\# T_{\Q} \uh (mk+s)}.
\]
This verifies that the condition $(\dagger \dagger)$ will be satisfied by constant $d =  2^{2m}$, for all $n \geq mk$, so that $\P$ is VL-homogeneous. 
\end{proof}

By the comments following Corollary \ref{cor-a}, the $\Pi^0_1$ $\P$ class from Example \ref{ex7} is weakly $n$-homogeneous and thus VL-homogeneous by the above result.  Since the notions of randomness from Sections \ref{Local} - \ref{sec-Intermediate} do not coincide in $\P$, it follows that VL-homogeneity is also not sufficient to guarantee the equivalence of these notions.

Recalling that for an s.s.-homogeneous $\Pi^0_1$ class $\P$ and any $\Pi^0_1$ $\Q \subseteq \P$, we have $\lambda_\P(\Q) = \lim_{n \to \infty} \frac{ \# T_\Q \uh n}{\# T_\P \uh n}$ as  discussed previously, we would like see what conditions ensure that
\[
\lim_{n\rightarrow\infty}\dfrac {\# T_\Q \uh n}{\#T_\P \uh n}
\]
 exists for subclasses $\Q$ of a $\pz$ class $\P$.  Recall Example \ref{ex3} above, for which 
 \[
 \lim_{n \to \infty} \frac {\lambda(\llb 0 \rrb \cap \llb T_\P \uh n\rrb)}{\lambda(\llb T_\P \uh n\rrb)}
 \]
  does not exist.  This class is weakly 4-homogeneous and therefore is vL-homogeneous by Theorem \ref{thm-weaktovl}.
Thus VL-homogeneity does not suffice to ensure the existence of 
the limit 
\[
\lim_{n \to \infty} \frac{ \# T_{\P\cap\llb 0\rrb} \uh n}{\# T_\P \uh n}
\]
and consequently it does not suffice to ensure that $\lambda_\P$ is defined everywhere. Note also that since there are random paths through $\P$ that are not globally random, $\P$ provides another example of a VL-homogeneous class in which the various notions of randomness for $\Pi^0_1$ classes fail to coincide.

%
%

In the case that $\lambda_\P$ is defined in a VL-homogeneous $\Pi^0_1$ class $\P$, we can show the coincidence of global randomness and $T_\P$-incompressibility.  First we prove a lemma.

\begin{lemma}\label{lem-upperlower}
Let $\P$ be a VL-homogeneous $\Pi^0_1$ class. 
\begin{itemize}
\item[(i)] If $\lambda_\P(\sigma)$ is defined, then
\[
\Bigl|-\log\lambda_\P(\sigma)-\log\#T_\P\uh|\sigma|\Bigr|\leq O(1).
\]
\item[(ii)] If $\lambda_\P(\sigma)$ is not defined, we still have
\[
\Bigl|-\log\lambda_\P^+(\sigma)-\log\#T_\P\uh|\sigma|\Bigr|\leq O(1)
\]
and
\[
\Bigl|-\log\lambda_\P^-(\sigma)-\log\#T_\P\uh|\sigma|\Bigr|\leq O(1).
\]

\end{itemize}
\end{lemma}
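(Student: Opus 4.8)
The plan is to reduce the whole statement to a comparison of tree-sizes and then feed the two defining inequalities of VL-homogeneity into it. First I would record the basic translation: for $\sigma\in T_\P$ with $|\sigma|=m$ and any $n\geq m$, setting $\Q=\llb\sigma\rrb\cap\P$, we have $\#T_\Q\uh n=\#\{\tau\in T_\P\uh n:\sigma\preceq\tau\}$, so that
\[
\dfrac{\lambda(\llb\sigma\rrb\cap\llb T_\P\uh n\rrb)}{\lambda(T_\P\uh n)}=\dfrac{2^{-n}\,\#T_\Q\uh n}{2^{-n}\,\#T_\P\uh n}=\dfrac{\#T_\Q\uh n}{\#T_\P\uh n}.
\]
The crucial bookkeeping point is that since $\Q\subseteq\llb\sigma\rrb$, the only string of length $m$ in $T_\Q$ is $\sigma$ itself, whence $\#T_\Q\uh m=1$. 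This is what lets me convert statements about $\#T_\Q\uh n/\#T_\Q\uh m$ into statements about $\#T_\P\uh m$.

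Next I would apply the two conditions to this particular $\Q$, with $c$ the constant from the definition of VL-homogeneity. Condition $(\dagger)$, with the definition's indices $n,k$ played by $m,n$ (so that $n\geq m$), gives $\#T_\Q\uh n/\#T_\Q\uh m\leq c\cdot\#T_\P\uh n/\#T_\P\uh m$; using $\#T_\Q\uh m=1$ and dividing through by $\#T_\P\uh n$ yields the upper bound
\[
\dfrac{\#T_\Q\uh n}{\#T_\P\uh n}\leq\dfrac{c}{\#T_\P\uh m}.
\]
Dually, condition $(\dagger\dagger)$ — which is stated precisely for the subclass $\llb\sigma\rrb\cap\P$ — with the same choice of indices gives $\#T_\P\uh n/\#T_\P\uh m\leq c\cdot\#T_\Q\uh n$, i.e.
\[
\dfrac{\#T_\Q\uh n}{\#T_\P\uh n}\geq\dfrac{1}{c\cdot\#T_\P\uh m}.
\]
Thus for every $n\geq m$ the ratio $\#T_\Q\uh n/\#T_\P\uh n$ is sandwiched between $1/(c\,\#T_\P\uh m)$ and $c/\#T_\P\uh m$.

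Finally I would pass to the limit. In case (i), letting $n\to\infty$ shows $\lambda_\P(\sigma)$ lies in the same interval, and taking $-\log$ of the two bounds gives $\bigl|-\log\lambda_\P(\sigma)-\log\#T_\P\uh|\sigma|\bigr|\leq\log c$. In case (ii) the sandwich holds for \emph{every} $n\geq m$, so both the $\limsup$ and the $\liminf$ — that is, $\lambda_\P^+(\sigma)$ and $\lambda_\P^-(\sigma)$ — lie in that same interval, and the identical logarithmic estimate applies to each. The point I would stress is that the additive error is exactly $\log c$, a single constant coming straight from the definition of VL-homogeneity and hence independent of $\sigma$; this uniformity is what makes the bound an honest $O(1)$ rather than a $\sigma$-dependent error. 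There is no serious obstacle here: the only subtleties are keeping the two directions straight — $(\dagger)$ feeding the upper bound and $(\dagger\dagger)$ the lower bound — and the elementary but essential observation $\#T_\Q\uh m=1$; once those are in place the argument is a direct substitution into the defining inequalities.
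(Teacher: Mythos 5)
Your proposal is correct and follows essentially the same route as the paper's own proof: rewrite the relative-measure ratio as $\#T_{\P\cap\llb\sigma\rrb}\uh n/\#T_\P\uh n$, use $\#T_{\P\cap\llb\sigma\rrb}\uh{|\sigma|}=1$ to turn the two VL-homogeneity conditions into the two-sided sandwich $\frac{1}{c}\cdot\frac{1}{\#T_\P\uh{|\sigma|}}\leq\frac{\#T_{\P\cap\llb\sigma\rrb}\uh n}{\#T_\P\uh n}\leq c\cdot\frac{1}{\#T_\P\uh{|\sigma|}}$, and pass to the limit (or $\limsup$/$\liminf$) with error $\log c$. Your explicit bookkeeping of which condition, $(\dagger)$ or $(\dagger\dagger)$, supplies which side of the sandwich is merely a more detailed spelling-out of the step the paper compresses into a single displayed inequality.
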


\begin{proof}
Let $\sigma\in\str$.  Let $k\geq |\sigma|$.  Note that $\llb\sigma\rrb\cap\llb T_\P\uh k\rrb=\llb T_{\P\cap\llb\sigma\rrb}\uh k\rrb$, so that 
\[
\lambda(\llb\sigma\rrb\cap\llb T_\P\uh k\rrb)=\lambda(T_{\P\cap\llb\sigma\rrb}\uh k)=2^{-k}\cdot \# T_{\P\cap\llb\sigma\rrb}\uh k.
\]
Thus we can write
\begin{equation}\label{eq-another}
\frac{\lambda(\llb\sigma\rrb\cap\llb T_\P\uh k\rrb)}{\lambda(T_\P\uh k)}=\frac{2^{-k}\cdot \# T_{\P\cap\llb\sigma\rrb}\uh k}{2^{-k}\cdot\#T_\P\uh k}=\frac{\# T_{\P\cap\llb\sigma\rrb}\uh k}{\#T_\P\uh k}.
\end{equation}
By the VL-homogeneity of $\P$ and the fact that $\# T_{\P\cap\llb\sigma\rrb}\uh |\sigma|=1$, there is some constant $c$ such that
\[
\frac{1}{c}\cdot\frac{\# T_\P\uh k}{\# T_\P\uh |\sigma|} \leq\# T_{\P\cap\llb\sigma\rrb}\uh k\leq c\cdot\frac{\# T_\P\uh k}{\# T_\P\uh |\sigma|}.
\]
Dividing through by $\# T_\P\uh k$ yields
\begin{equation}\label{eq-yeah}
\frac{1}{c}\cdot\frac{1}{\# T_\P\uh |\sigma|} \leq\frac{\# T_{\P\cap\llb\sigma\rrb}\uh k}{\# T_\P\uh k}\leq c\cdot\frac{1}{\# T_\P\uh |\sigma|}.
\end{equation}
By applying (\ref{eq-another}) to (\ref{eq-yeah}), we get
\[
\frac{1}{c}\cdot\frac{1}{\# T_\P\uh |\sigma|} \leq\frac{\lambda(\llb\sigma\rrb\cap\llb T_\P\uh k\rrb)}{\lambda(T_\P\uh k)}\leq c\cdot\frac{1}{\# T_\P\uh |\sigma|}.
\]
In the case that $\lambda_\P(\sigma)=\lim_{k\rightarrow\infty}\frac{\lambda(\llb\sigma\rrb\cap\llb T_\P\uh k\rrb)}{\lambda(T_\P\uh k)}$ exists, we have
\[
\Bigl|-\log\lambda_\P(\sigma)-\log\#T_\P\uh|\sigma|\Bigr|\leq \log(c).
\]
In the case that $\lambda_\P(\sigma)$ is not defined, we have
\[
\frac{1}{c}\cdot\frac{1}{\# T_\P\uh |\sigma|} \leq\liminf_{k\rightarrow\infty}\frac{\lambda(\llb\sigma\rrb\cap\llb T_\P\uh k\rrb)}{\lambda(T_\P\uh k)}\leq\limsup_{k\rightarrow\infty}\frac{\lambda(\llb\sigma\rrb\cap\llb T_\P\uh k\rrb)}{\lambda(T_\P\uh k)}\leq c\cdot\frac{1}{\# T_\P\uh |\sigma|}
\]
from which we can derive
\[
\Bigl|-\log\lambda^+_\P(\sigma)-\log\#T_\P\uh|\sigma|\Bigr|\leq \log(c)\;\;\text{and}\;\;
\Bigl|-\log\lambda^-_\P(\sigma)-\log\#T_\P\uh|\sigma|\Bigr|\leq \log(c).
\]

\end{proof}

Using part (i) of Lemma \ref{lem-upperlower} we can now conclude the following:

\begin{theorem}\label{thm-vlhomequiv}
Let $\P$ be a VL-homogeneous $\Pi^0_1$ class and suppose that $\lambda_\P$ is defined.
 Then for any $X \in \P$, $X$ is globally random in $\P$ if and only if $X$ is $T_\P$-incompressible.

\end{theorem}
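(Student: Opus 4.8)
The plan is to reduce both sides of the equivalence to initial-segment complexity conditions and observe that they share a common threshold up to an additive constant. By the characterization of global randomness recorded immediately after the definition of globally random (an application of the relativized Levin-Schnorr Theorem \ref{thm-levinschnorr} to the $T_\P$-computable measure $\lambda_\P$), $X\in\P$ is globally random in $\P$ if and only if there is a constant $c_0$ with
\[
K^{T_\P}(X\uh n)\geq -\log\lambda_\P(X\uh n)-c_0
\]
for all $n$. On the other hand, by Definition \ref{def:gi}, $X$ is $T_\P$-incompressible if and only if there is a constant $c_1$ with $K^{T_\P}(X\uh n)\geq \log\#T_\P\uh n-c_1$ for all $n$. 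Thus the theorem amounts to showing that the two thresholds $-\log\lambda_\P(X\uh n)$ and $\log\#T_\P\uh n$ are interchangeable inside an $O(1)$.

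This is exactly what part (i) of Lemma \ref{lem-upperlower} supplies. Since we are assuming $\lambda_\P$ is defined, the lemma applies to every string $\sigma$ and yields a single constant $c$ (arising from the VL-homogeneity constant) such that
\[
\Bigl|-\log\lambda_\P(\sigma)-\log\#T_\P\uh|\sigma|\Bigr|\leq \log c.
\]
First I would specialize this to $\sigma=X\uh n$, noting that the bound $\log c$ is uniform in $n$ because $c$ does not depend on $\sigma$; this uniformity is the whole point, and it is precisely what lets the discrepancy be absorbed into the additive constant.

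Given this, the conclusion is immediate. If $X$ is globally random, then for all $n$ we have $K^{T_\P}(X\uh n)\geq -\log\lambda_\P(X\uh n)-c_0\geq \log\#T_\P\uh n-(c_0+\log c)$, so $X$ is $T_\P$-incompressible with constant $c_0+\log c$; the reverse implication is symmetric, replacing $\log\#T_\P\uh n$ by $-\log\lambda_\P(X\uh n)$ at the cost of $\log c$. I expect no genuine obstacle here: all of the analytic work has already been carried out in establishing Lemma \ref{lem-upperlower}(i), and what remains is only the bookkeeping of combining additive constants. The single point worth flagging is that the argument uses the hypothesis that $\lambda_\P$ is defined in an essential way, since only then does Lemma \ref{lem-upperlower}(i) (rather than the weaker limsup/liminf estimates of part (ii)) apply to give a two-sided bound at every level, and hence a genuine equivalence rather than a one-directional estimate.
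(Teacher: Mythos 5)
Your proposal is correct and follows exactly the paper's intended argument: the paper proves this theorem precisely by citing part (i) of Lemma \ref{lem-upperlower} (which gives $\bigl|-\log\lambda_\P(\sigma)-\log\#T_\P\uh|\sigma|\bigr|\leq O(1)$ uniformly when $\lambda_\P$ is defined) together with the Levin-Schnorr characterization of global randomness and Definition \ref{def:gi}, absorbing the discrepancy into the additive constants. Your bookkeeping of the constants and your remark about why the hypothesis that $\lambda_\P$ is defined is essential (so that part (i) rather than the one-sided limsup/liminf bounds of part (ii) applies) are exactly the content the paper leaves implicit.
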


\subsection{Additive homogeneity} \label{sec6.4}

We lastly turn to an additive notion of homogeneity for $\Pi^0_1$ classes.

\begin{definition}  Let $\P$ be a $\Pi^0_1$ class. $\P$ is \emph{additively homogeneous} (\emph{a-homogeneous} for short) if there is some constant $c\in\omega$ such that for every $n$ and every $\sigma,\tau\in T_\P\uh n$,
\[
\bigl|\theta_\P(\sigma)-\theta_\P(\tau)\bigr|\leq c.
\]
\end{definition}



Recall Example \ref{ex7} above, where $\P = \{00,10,11\}^{\omega}$.  Here we have $\theta_\P(0^{2n}) = n$ but $\theta_\P(1^{2n}) = 2n$, so that $\P$ is not a-homogeneous.  Thus $n$-homogeneous $\Pi^0_1$ classes need not in general be a-homogeneous.  

We also have the following.

\begin{proposition}
There is an a-homogeneous $\Pi^0_1$ class $\P$ that is not weakly $n$-homogeneous for any $n\in\omega$.
\end{proposition}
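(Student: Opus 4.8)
The plan is to realize $\P$ as a disjoint union $\P = \P_0 \oplus \P_1$ of two s.s.-homogeneous (i.e.\ $1$-homogeneous) classes whose branching patterns are ``out of phase'' with one another. For a computable set $S \subseteq \omega$, let $\P_S = \{X \in \cs : X(i) = 0 \text{ for all } i \notin S\}$; this is a decidable $\Pi^0_1$ class in which a node of length $m$ branches exactly when $m \in S$, so every node of $T_{\P_S}$ of length $m$ has the same number of branchings below it, namely $\#(S \cap [0,m))$. Fixing computable sets $S_0, S_1$ and setting $\P_i = \P_{S_i}$, I would first record the two bookkeeping facts that drive everything: for a node $0 \fr \rho \in T_\P$ we have $\theta_\P(0 \fr \rho) = 1 + \#(S_0 \cap [0,|\rho|))$, the leading $1$ accounting for the root branch of the disjoint union, and symmetrically for $1 \fr \rho$; and the number of length-$n(k+1)$ extensions of a level-$nk$ node $0 \fr \rho$ in $T_\P$ is $2^{\#(S_0 \cap [nk-1,\,n(k+1)-1))}$, again symmetrically for $\P_1$.

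The whole construction is then governed by the discrepancy function
\[
D(m) = \#(S_0 \cap [0,m)) - \#(S_1 \cap [0,m)).
\]
The first bookkeeping fact shows that, for nodes $\sigma, \tau \in T_\P$ of a common length $\ell \geq 1$, the quantity $|\theta_\P(\sigma) - \theta_\P(\tau)|$ equals either $0$ or $|D(\ell-1)|$; hence \emph{$\P$ is a-homogeneous if and only if $D$ is bounded}. The second fact shows that, for the block $B_k = [nk-1,\,n(k+1)-1)$, weak $n$-homogeneity fails at level $nk$ precisely when $\#(S_0 \cap B_k) \ne \#(S_1 \cap B_k)$ (the extreme extension-counts at that level being the $\P_0$- and $\P_1$-values), and since this difference telescopes to $D(n(k+1)-1) - D(nk-1)$, \emph{$\P$ fails to be weakly $n$-homogeneous as soon as $D$ is non-constant on the shifted grid $\{nk - 1 : k \ge 1\}$}. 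So the task reduces to producing a computable, bounded $D$ that is non-constant on every such grid, together with $S_0, S_1$ realizing it.

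For the explicit choice I would take isolated spikes: set $S_1 = \{m! - 1 : m \ge 2\}$ and $S_0 = \{m! - 2 : m \ge 2\}$, so that $D = 1$ exactly on the set $A = \{m! - 1 : m \ge 2\}$ and $D = 0$ elsewhere (this uses that consecutive factorials are far apart, so the $+1$ and $-1$ increments never collide). This $D$ is bounded by $1$, giving a-homogeneity with constant $c = 1$. For a fixed $n$, the point is that $n \mid m!$ for every $m \ge n$, so $m! - 1 \equiv -1 \pmod n$ lands on the grid $\{nk - 1\}$; choosing $m \ge \max(n,3)$ with $m! \ge 2n$ and setting $k = m!/n - 1 \ge 1$ makes $D(n(k+1)-1) = D(m!-1) = 1$, while a short estimate $(m-1)! < m! - n < m!$ shows $m! - n$ is not a factorial, so $D(nk-1) = D(m!-n-1) = 0$. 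Thus $D$ is non-constant on the grid and weak $n$-homogeneity fails for this $n$.

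I expect the main obstacle to be the tension embodied in $D$: a-homogeneity forces $D$ to stay bounded, whereas defeating weak $n$-homogeneity for \emph{every} $n$ forces $D$ to keep moving along every arithmetic grid, and at first glance these pull in opposite directions. The resolution, and the delicate part of the write-up, is the observation that boundedness constrains only the \emph{amplitude} of $D$, not its support, so rare isolated spikes suffice; the one genuine calculation is checking that the factorial spikes simultaneously (i) hit the offset grid $\{nk-1\}$ for every $n$ and (ii) leave the neighbouring grid point $nk-1$ spike-free, which is exactly where the divisibility $n \mid m!$ and the factorial gap estimate are used.
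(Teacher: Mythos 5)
Your proof is correct, and it takes a genuinely different route from the paper's. The paper constructs $\P$ directly, level by level: in designated blocks of levels (roughly $[n^2-n,\,n^2]$ for each $n$) it breaks weak $n$-homogeneity by withholding one branch at a single node, and then at the very next level restores the balance of $\theta_\P$ across all nodes so that a-homogeneity holds with constant $c=1$; the verification is left largely implicit ("continue a similar construction\dots clearly"). You instead build the class structurally, as a disjoint union $\P_0\oplus\P_1$ of two decidable classes whose branching occurs only at prescribed levels $S_0,S_1$, and you reduce both properties to the single discrepancy function $D(m)=\#(S_0\cap[0,m))-\#(S_1\cap[0,m))$: a-homogeneity amounts to boundedness of $D$ (since within each side $\theta_\P$ depends only on length, and across sides the gap is $|D(\ell-1)|$), while weak $n$-homogeneity fails exactly when $D$ is non-constant on the grid $\{nk-1: k\geq 1\}$ (since extension counts over a block are $2$ raised to the number of branching levels in that block, and the difference of exponents telescopes). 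Your factorial-spike choice then makes both conditions easy to check, the divisibility $n\mid m!$ and the gap estimate $(m-1)!<m!-n<m!$ doing all the work. What each approach buys: the paper's diagonalization is flexible and short but sketchy, whereas yours yields a completely explicit, easily verifiable example together with a reusable criterion (bounded but grid-nonconstant discrepancy) that cleanly isolates the tension between the two homogeneity notions; it is also a natural sharpening of the paper's own Example \ref{ex3}, which already used a disjoint union with out-of-phase branching to destroy homogeneity, but without controlling the discrepancy amplitude as you must do here.
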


\begin{proof}
We construct $\P$ level by level as follows.  For each $n\in\omega$, we define levels 
\[
T_\P\uh (n^2-n),T_\P\uh (n^2-n+1),\dotsc,T_\P\uh n^2
\]
to diagonalize against weak $n$-homogeneity.  We begin by including in $T_\P$ all strings of length 4 except for 1111 (and their initial segments).  This guarantees that $T_\P$ is not 2-homogeneous, as 00 has 4 extensions in $T_\P\uh 4$ but 11 only has 3.  For each string $\sigma\in T_\P\uh 4$ except for 1110, we add $\sigma0$ to $T_\P\uh 5$ as well as 11100 and 11101.  This has the effect that $\theta_\P(\sigma)=\theta_\P(\tau)$ for all $\sigma,\tau\in T_\P\uh 5$.  We continue a similar construction from levels 6 to 9, 12 to 16, 20 to 25, etc., while adding all possible extensions between these levels.  Clearly, the resulting $\Pi^0_1$ class is a-homogeneous (with constant $c=1$) and fails to be weakly $n$-homogeneous for all $n\in\omega$.
\end{proof}

This example also shows that so $\lambda_\P$ need not be defined for a-homogeneous classes. 

Despite the fact that $\lambda_\P$ is not defined for every a-homogeneous class $\P$, we still get a stronger result concerning the relationship between the various notions of randomness for $\Pi^0_1$ classes than we do for weakly $n$-homogeneous classes and VL-homogeneous classes, which we now show.  First we prove a useful combinatorial lemma.

\begin{lemma}\label{lem-branches}
Let $\P$ be a $\Pi^0_1$ class and let $n\in\omega$ and $\sigma\in\str$.
\begin{itemize}
\item[(i)] If every extension $\tau\succeq\sigma$ in $T_\P\uh n$ satisfies $\theta_\P(\tau)\geq\theta_\P(\sigma)+ \ell$, then $\sigma$ has at least $2^\ell$ extensions in $T_\P\uh n$.
\item[(ii)] If every extension $\tau\succeq\sigma$ in $T_\P\uh n$ satisfies $\theta_\P(\tau)=\theta_\P(\sigma)+ \ell$, then $\sigma$ has exactly $2^\ell$ extensions in $T_\P\uh n$.
\end{itemize}
\end{lemma}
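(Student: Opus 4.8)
The plan is to reinterpret $\theta_\P$ geometrically and then induct on the gap $n-|\sigma|$. First I would record the key identity that, for $\sigma\preceq\tau$ in $T_\P$, the difference $\theta_\P(\tau)-\theta_\P(\sigma)$ equals the number of \emph{branching nodes} $\rho$ (nodes with both $\rho\fr 0,\rho\fr 1\in T_\P$) satisfying $\sigma\preceq\rho\prec\tau$. This is immediate from the definition of $\theta_\P$: the condition $(\tau\uh m)^\curvearrowright\in T_\P$ holds exactly when the parent $\tau\uh(m-1)$ is a branching node, so $\theta_\P(\tau)$ counts the branching proper prefixes of $\tau$; and since the prefixes of $\tau$ of length below $|\sigma|$ coincide with those of $\sigma$, subtracting $\theta_\P(\sigma)$ leaves precisely the branchings with $\sigma\preceq\rho\prec\tau$. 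With this reading, both parts become statements counting the leaves at level $n$ of the finite tree of extensions of $\sigma$ according to how many branchings occur along each path.

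Next I would prove both parts simultaneously by induction on $n-|\sigma|$, assuming $\sigma\in T_\P$ (otherwise there are no extensions and the claim is vacuous). The base case $n=|\sigma|$ forces $\ell=0$ and the single extension $\tau=\sigma$. For the inductive step I split on the branching behaviour of $\sigma$ itself. If $\sigma$ has a unique child $\sigma\fr i$ in $T_\P$, then $\sigma$ contributes no branching, so $\theta_\P(\sigma\fr i)=\theta_\P(\sigma)$, and the hypothesis on the extensions of $\sigma$ transfers verbatim to the extensions of $\sigma\fr i$ with the same $\ell$; applying the inductive hypothesis to $\sigma\fr i$, whose level-$n$ extensions are exactly those of $\sigma$, closes this case. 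If instead both $\sigma\fr 0,\sigma\fr 1\in T_\P$, then $\sigma$ is branching and $\theta_\P(\sigma\fr i)=\theta_\P(\sigma)+1$, so every extension $\tau\succeq\sigma\fr i$ at level $n$ satisfies the corresponding hypothesis relative to $\sigma\fr i$ with $\ell$ replaced by $\ell-1$. The inductive hypothesis applied to $\sigma\fr 0$ and $\sigma\fr 1$ gives at least (respectively exactly) $2^{\ell-1}$ extensions through each child, and summing over the two disjoint families yields at least (respectively exactly) $2^{\ell}$ extensions of $\sigma$.

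The one point needing care, and the only real obstacle, is the bookkeeping when $\ell=0$ or, in the branching case of part (i), when the reduction produces $\ell-1<0$. I would handle this by reading ``$\sigma$ has at least $2^{\ell}$ extensions'' as a genuine inequality of reals: for $\ell\leq 0$ we have $2^{\ell}\leq 1$, and since $T_\P$ has no dead ends and $n\geq|\sigma|$, the node $\sigma$ always has at least one extension at level $n$, so the bound holds trivially. This lets the induction for part (i) run uniformly over all integers $\ell$ with no separate argument. For part (ii) the reduction is even cleaner, since when $\sigma$ is branching and $n>|\sigma|$ every extension already has $\theta$-difference at least $1$; the equality hypothesis then forces $\ell\geq 1$, so $\ell-1\geq 0$ and the inductive hypothesis remains applicable at each child.
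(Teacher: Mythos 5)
Your proof is correct, but it is organized differently from the paper's. The paper proves part (i) by induction on $\ell$: the base case $\ell=1$ extracts a branching prefix directly from the definition of $\theta_\P$, and the inductive step passes from $\sigma$ to its two shortest incompatible extensions $\rho_0,\rho_1$ (the children of the first branching node at or above $\sigma$), each of which satisfies the hypothesis with increment $\ell$ since $\theta_\P(\rho_i)=\theta_\P(\sigma)+1$; part (ii) is then declared ``nearly identical.'' You instead induct on the length gap $n-|\sigma|$, stepping one level at a time and splitting on whether $\sigma$ itself branches. The two arguments rest on the same combinatorial fact --- each branching along the way doubles the number of level-$n$ extensions --- but your decomposition buys a few things the paper leaves tacit: the explicit identity that $\theta_\P(\tau)-\theta_\P(\sigma)$ counts the branching nodes $\rho$ with $\sigma\preceq\rho\prec\tau$, a uniform treatment of the degenerate cases $\ell\leq 0$ (read as an inequality of reals, using that $T_\P$ has no dead ends), and the observation that in (ii) the equality hypothesis at a branching node forces $\ell\geq 1$. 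The paper's route is shorter because jumping to $\rho_0,\rho_1$ collapses an entire non-branching stretch in one move, whereas your level-by-level step must handle unique-child nodes explicitly; on the other hand, the paper's inductive step quietly re-bases the hypothesis from $\sigma$ to $\rho_i$ (it states the bound as $\theta_\P(\tau)\geq\theta_\P(\sigma)+\ell$ rather than $\theta_\P(\tau)\geq\theta_\P(\rho_i)+\ell$), a point your bookkeeping makes fully explicit.
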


\begin{proof}
(i) For $\ell=1$, given $\tau\succeq\sigma$ of length $n$ that satisfies $\theta_\P(\tau)\geq\theta_\P(\sigma)+ 1$, there must be some $j$ satisfying $|\sigma|<j<n$ such that $(\tau\uh j)^\curvearrowright\in T_\P$.  Hence there is some $\tau'\succeq(\tau\uh j)^\curvearrowright\succeq\sigma$ in $T_\P\uh n$, yielding two extensions of $\sigma$ of length $n$.  

Suppose the claim holds for all $n$ and $\sigma$ and some fixed $\ell$; we will show it similarly holds for $\ell+1$. Suppose  further that every extension $\tau\succeq\sigma$ of length $n$ satisfies $\theta_\P(\tau)\geq\theta_\P(\sigma)+ \ell+1$.  Let $\rho_0,\rho_1\succeq\sigma$ be the shortest incompatible extensions of $\sigma$.  Then for $i=0,1$, every extension of $\rho_i$ of length $n$ satisfies $\theta_\P(\tau)\geq\theta_\P(\sigma)+ \ell$.  Thus by the inductive hypothesis, $\rho_i$ has at least $2^\ell$ extensions of length $n$ for $i=0,1$.  Thus $\sigma$ has at least $2^{\ell+1}$ extensions of length $n$.

The proof of (ii) is nearly identical.
\end{proof}

\begin{lemma}\label{lem-ahom}
Let $\P$ be an a-homogeneous $\Pi^0_1$ class.  Then there is some $c\in\omega$ such that for every $n$ and every $\sigma\in T_\P\uh n$,
\[
\bigl|\theta_\P(\sigma)-\log\# T_\P\uh n\bigr|\leq c.
\]
\end{lemma}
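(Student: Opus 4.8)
The plan is to sandwich $\log \# T_\P\uh n$ between the smallest and the largest value of $\theta_\P$ on $T_\P\uh n$, and then invoke a-homogeneity to conclude that everything lives in an interval of bounded length. Fix the a-homogeneity constant $c$, and for a fixed level $n$ write $\ell^- = \min\{\theta_\P(\tau): \tau \in T_\P\uh n\}$ and $\ell^+ = \max\{\theta_\P(\tau): \tau \in T_\P\uh n\}$, so that a-homogeneity gives $\ell^+ - \ell^- \leq c$. Since every $\sigma \in T_\P\uh n$ satisfies $\ell^- \leq \theta_\P(\sigma) \leq \ell^+$, it suffices to prove that $\ell^- \leq \log \# T_\P\uh n \leq \ell^+$ as well; for then both $\theta_\P(\sigma)$ and $\log \# T_\P\uh n$ lie in a common interval of length at most $c$, which yields $|\theta_\P(\sigma) - \log \# T_\P\uh n| \leq c$ for every $\sigma \in T_\P\uh n$.

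For the lower bound $\# T_\P\uh n \geq 2^{\ell^-}$ I would apply Lemma \ref{lem-branches}(i) to the root $\epsilon$: since $\theta_\P(\epsilon)=0$ and every $\tau \in T_\P\uh n$ satisfies $\theta_\P(\tau) \geq \ell^- = \theta_\P(\epsilon) + \ell^-$, the root has at least $2^{\ell^-}$ extensions of length $n$, i.e.\ $\log \# T_\P\uh n \geq \ell^-$.

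The main obstacle is the matching upper bound $\# T_\P\uh n \leq 2^{\ell^+}$, since Lemma \ref{lem-branches} as stated only produces lower bounds on branching (and an exact count under the rigid hypothesis of part (ii)). The cleanest route I see is to establish the weight identity
\[
\sum_{\tau \in T_\P\uh n} 2^{-\theta_\P(\tau)} = 1 \qquad \text{for every } n,
\]
proved by induction on $n$, using that $T_\P$ has no dead ends. The base case is $2^{-\theta_\P(\epsilon)} = 1$, and in the inductive step each $\tau \in T_\P\uh n$ contributes exactly $2^{-\theta_\P(\tau)}$ via its children at level $n+1$: if $\tau$ is a branching node its two children $\tau0,\tau1$ each carry weight $2^{-(\theta_\P(\tau)+1)}$, while if $\tau$ is non-branching its unique child carries weight $2^{-\theta_\P(\tau)}$, and in both cases the weights sum to $2^{-\theta_\P(\tau)}$. (Alternatively one can prove the dual of Lemma \ref{lem-branches}(i) directly by the same induction on depth: if every length-$n$ extension $\tau \succeq \sigma$ satisfies $\theta_\P(\tau) \leq \theta_\P(\sigma) + \ell$, then $\sigma$ has at most $2^\ell$ such extensions, the branching case splitting $2^{\ell-1}+2^{\ell-1}$.)

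Granting the identity, both bounds fall out simultaneously: replacing each term by the uniform estimates $2^{-\ell^+} \leq 2^{-\theta_\P(\tau)} \leq 2^{-\ell^-}$ gives $\# T_\P\uh n \cdot 2^{-\ell^+} \leq 1 \leq \# T_\P\uh n \cdot 2^{-\ell^-}$, whence $2^{\ell^-} \leq \# T_\P\uh n \leq 2^{\ell^+}$ and therefore $\ell^- \leq \log \# T_\P\uh n \leq \ell^+$. Combined with the reduction in the first paragraph, this proves the lemma with $c$ taken to be the a-homogeneity constant.
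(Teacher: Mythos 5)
Your proof is correct, and its skeleton matches the paper's: both arguments sandwich $\log \# T_\P\uh n$ between the minimum $m_n$ and maximum $M_n$ of $\theta_\P$ on level $n$, then use a-homogeneity to conclude that $\theta_\P(\sigma)$ and $\log \# T_\P\uh n$ lie in a common interval of length at most $c$. The lower bound $\# T_\P\uh n \geq 2^{m_n}$ is obtained in both cases from Lemma \ref{lem-branches}(i) (the paper applies it to a suitable initial segment $\sigma\uh k$ of a minimizing string, you apply it directly to the root $\epsilon$; these are the same estimate). Where you genuinely diverge is the upper bound $\# T_\P\uh n \leq 2^{M_n}$: the paper argues informally that ``in the most extreme case'' every $\tau \in T_\P\uh n$ has $\theta_\P(\tau) = M_n$, invokes Lemma \ref{lem-branches}(ii) to get exactly $2^{M_n}$ extensions in that case, and then asserts the inequality in general --- a monotonicity claim that is plausible but not actually proved. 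Your Kraft-type identity $\sum_{\tau \in T_\P\uh n} 2^{-\theta_\P(\tau)} = 1$, proved by a clean induction using that $T_\P$ has no dead ends, replaces this extremal hand-wave with a rigorous argument, and as a bonus yields both bounds $2^{m_n} \leq \# T_\P\uh n \leq 2^{M_n}$ simultaneously, making the separate appeal to Lemma \ref{lem-branches} unnecessary. Your route is thus slightly longer to set up but tighter than the paper's, and the identity itself is a reusable fact (it is exactly the statement that the measure $\mu_\P = \lambda_{\Phi_\P}$ from Section \ref{Local} assigns total mass $1$ to each level of $T_\P$).
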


\begin{proof}
Let $c$ satisfy $\bigl|\theta_\P(\sigma)-\theta_\P(\tau)\bigr|\leq c$ for every $n\in\omega$ and every $\sigma,\tau\in\# T_\P\uh n$.  For a fixed $n\in\omega$, let $m_n=\min\{\theta_\P(\sigma)\colon\sigma\in\# T_\P\uh n\}$.  Given $\sigma\in\# T_\P\uh n$ with $\theta_\P(\sigma)=m_n$, set $m=m_n$, and let $k\geq n$ be largest such that $\theta_\P(\sigma\uh k)=0$ (if $k=0$, $\sigma\uh k=\epsilon$). By the minimality of $m$, it follows that every extension $\tau\in T_\P\uh n$ of $\sigma\uh k$  satisfies $\theta_\P(\tau)\geq m=\theta_\P(\sigma)+m$.  By Lemma \ref{lem-branches}(i), $\sigma\uh k$, and hence $\sigma$, has $2^m$ extensions in $T_\P\uh n$.  Thus 
\begin{equation}\label{eq-branch1}
\# T_\P\uh n\geq 2^m=2^{m_n}.
\end{equation}
Next, let $M_n=\max\{\theta_\P(\sigma)\colon\sigma\in\ T_\P\uh n\}.$  In the most extreme case, for every $\tau\in T_\P\uh n$ we have $\theta_\P(\tau)=M_n$.  Applying the Lemma \ref{lem-branches}(ii) to $\epsilon$, this implies that $\epsilon$ has $2^{M_n}$ extensions in $T_\P\uh n$.  Thus,
\begin{equation}\label{eq-branch2}
\# T_\P\uh n\leq 2^{M_n}.
\end{equation}
Combining (\ref{eq-branch1}) and (\ref{eq-branch2}) and taking logarithms yields
\begin{equation}\label{eq-branch3}
m_n\leq \log\# T_\P\uh n\leq M_n.
\end{equation}
Given $\sigma\in T_\P\uh n$, it follows from our hypothesis that
\begin{equation}\label{eq-branch4}
M_n-c\leq \theta_\P(\sigma)\leq m_n+c.
\end{equation}
From (\ref{eq-branch3}) and (\ref{eq-branch4}) we can conclude
\[
\bigl|\theta_\P(\sigma)-\log\# T_\P\uh n\bigr|\leq c.
\]

\end{proof}

A careful reading of the proof of Lemma \ref{lem-ahom} shows that each step can be reversed, so that we have the following. 

\begin{corollary}
 A $\Pi^0_1$ class $\P$ is a-homogeneous if and only if there is some $c\in\omega$ such that for every $n$ and every $\sigma\in T_\P\uh n$,
\[
\bigl|\theta_\P(\sigma)-\log\# T_\P\uh n\bigr|\leq c.
\]
\end{corollary}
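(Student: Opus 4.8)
The plan is to handle the two implications separately, since one of them is already in hand. The forward direction---that a-homogeneity implies the displayed bound---is exactly Lemma \ref{lem-ahom}, so I would simply invoke it. All the work in that lemma (the branch-counting via Lemma \ref{lem-branches} and the inequalities (\ref{eq-branch1})--(\ref{eq-branch4})) goes into producing a single constant $c$ that anchors every value $\theta_\P(\sigma)$ at level $n$ to the common quantity $\log\#T_\P\uh n$.

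For the converse I would argue directly rather than literally reversing the combinatorial steps. Suppose there is a constant $c$ such that $\bigl|\theta_\P(\sigma)-\log\#T_\P\uh n\bigr|\leq c$ for every $n$ and every $\sigma\in T_\P\uh n$. Fix $n$ and any two strings $\sigma,\tau\in T_\P\uh n$. Since both $\theta_\P(\sigma)$ and $\theta_\P(\tau)$ lie within $c$ of the single real number $\log\#T_\P\uh n$, the triangle inequality gives
\[
\bigl|\theta_\P(\sigma)-\theta_\P(\tau)\bigr|\leq\bigl|\theta_\P(\sigma)-\log\#T_\P\uh n\bigr|+\bigl|\log\#T_\P\uh n-\theta_\P(\tau)\bigr|\leq 2c.
\]
As this holds at every level $n$ and for every pair of strings of that length, $\P$ is a-homogeneous with constant $2c$.

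I do not expect any genuine obstacle here: the essential observation is that $\log\#T_\P\uh n$ serves as a common reference value at each level, so controlling the distance of every $\theta_\P(\sigma)$ to it automatically controls the pairwise spread of the $\theta_\P$-values. In particular, the converse requires none of the branch-counting machinery of Lemma \ref{lem-branches}; the remark preceding the statement about ``reversing each step'' of Lemma \ref{lem-ahom} overstates what is needed, and I would instead present this one-line triangle-inequality argument as the cleanest route, noting that a bound with constant $c$ yields a-homogeneity with constant $2c$.
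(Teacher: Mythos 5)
Your proof is correct. The forward direction is exactly Lemma \ref{lem-ahom}, and your converse works as stated: if every $\theta_\P(\sigma)$ with $\sigma\in T_\P\uh n$ lies within $c$ of the common reference value $\log\#T_\P\uh n$, then the triangle inequality bounds every pairwise difference $\bigl|\theta_\P(\sigma)-\theta_\P(\tau)\bigr|$ by $2c$, which is a-homogeneity with constant $2c$ (harmless, since the definition only demands some constant). The paper itself gives no explicit argument; it merely remarks that ``each step of the proof of Lemma \ref{lem-ahom} can be reversed.'' Your version makes precise the only reversal that is actually required, namely the final step where the $\theta_\P$-values are anchored to $\log\#T_\P\uh n$, and you are right that the branch-counting machinery of Lemma \ref{lem-branches} --- which in the lemma's proof is what squeezes $\log\#T_\P\uh n$ between $m_n$ and $M_n$ --- is needed only in the forward direction and plays no role in the converse. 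So your route is a cleaner and more economical rendering of what the paper gestures at, rather than a genuinely different proof.
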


The following is an immediate consequence of Theorem \ref{thm-rpp-isc} and Lemma \ref{lem-ahom}.

\begin{theorem}\label{ahom-med}
Let $\P$ be an a-homogeneous $\Pi^0_1$ class.   Then $X\in \P$ is a randomly produced path through $T_\P$ if and only if $X$ is $T_\P$-incompressible.
\end{theorem}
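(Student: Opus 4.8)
The plan is to obtain the equivalence directly by combining the initial-segment complexity characterization of randomly produced paths from Theorem \ref{thm-rpp-isc} with the combinatorial estimate of Lemma \ref{lem-ahom}, so that the only real work is to recognize that a-homogeneity forces the two relevant complexity thresholds to agree up to an additive constant.

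First I would recall the two characterizations being compared. By Theorem \ref{thm-rpp-isc}, an element $X\in\P$ is a randomly produced path through $T_\P$ if and only if
\[
K^{T_\P}(X\uh n)\geq \theta_\P(X\uh n)-O(1),
\]
while by Definition \ref{def:gi}, $X$ is $T_\P$-incompressible if and only if
\[
K^{T_\P}(X\uh n)\geq \log\#T_\P\uh n-O(1).
\]
These two conditions differ only in whether the relevant threshold is $\theta_\P(X\uh n)$ or $\log\#T_\P\uh n$.

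Next I would invoke a-homogeneity. Since $\P$ is a-homogeneous, Lemma \ref{lem-ahom} provides a constant $c\in\omega$ with $\bigl|\theta_\P(\sigma)-\log\#T_\P\uh n\bigr|\leq c$ for every $n$ and every $\sigma\in T_\P\uh n$. Because $X\in\P$, the initial segment $X\uh n$ is an extendible node, i.e.\ $X\uh n\in T_\P\uh n$, so specializing to $\sigma=X\uh n$ gives
\[
\bigl|\theta_\P(X\uh n)-\log\#T_\P\uh n\bigr|\leq c
\]
for all $n$.

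Finally I would conclude by absorbing this bounded discrepancy into the $O(1)$ terms. Replacing one threshold by the other changes the right-hand side of the complexity inequality by at most the fixed constant $c$, which can be absorbed into the additive constant in either direction; hence the displayed inequality with threshold $\theta_\P(X\uh n)$ holds for all $n$ if and only if the one with threshold $\log\#T_\P\uh n$ does, and together with the two characterizations this yields the desired equivalence. I do not expect any genuine obstacle here, as the entire combinatorial burden has already been discharged in Lemma \ref{lem-ahom}; the only point that requires a moment's attention is the observation that $X\uh n\in T_\P\uh n$, which is exactly what allows Lemma \ref{lem-ahom} to be applied along the initial segments of $X$.
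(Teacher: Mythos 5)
Your proposal is correct and follows exactly the paper's intended argument: the paper states this theorem as an immediate consequence of Theorem \ref{thm-rpp-isc} and Lemma \ref{lem-ahom}, which is precisely the combination you carry out, including the key (if small) observation that $X\uh n\in T_\P\uh n$ so the lemma applies along initial segments and the constant $c$ is absorbed into the $O(1)$ terms.
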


Next we have:

\begin{theorem}\label{thm-ahom-svlhom}
Every a-homogeneous $\Pi^0_1$ class is VL-homogeneous.
\end{theorem}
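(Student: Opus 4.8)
The plan is to verify the two conditions $(\dagger)$ and $(\dagger\dagger)$ in the definition of VL-homogeneity separately, converting the additive control on $\theta_\P$ supplied by a-homogeneity into the multiplicative control on the branching numbers $\#T_\P\uh m$ demanded by VL-homogeneity. The bridge is Lemma \ref{lem-ahom}: first I would fix $c'\in\omega$ so that $\bigl|\theta_\P(\rho)-\log\#T_\P\uh m\bigr|\leq c'$ for every $m$ and every $\rho\in T_\P\uh m$.

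The combinatorial core is a two-sided estimate. For $\sigma\in T_\P$ with $|\sigma|=s$ and any $k\geq s$, let $B_k$ denote the number of extensions of $\sigma$ in $T_\P\uh k$, and set $d_{\min}=\min\{\theta_\P(\tau)-\theta_\P(\sigma):\tau\in T_\P\uh k,\ \tau\succeq\sigma\}$ and $d_{\max}=\max\{\theta_\P(\tau)-\theta_\P(\sigma):\tau\in T_\P\uh k,\ \tau\succeq\sigma\}$. Then
\[
2^{d_{\min}}\leq B_k\leq 2^{d_{\max}}.
\]
The lower bound is Lemma \ref{lem-branches}(i) applied with $\ell=d_{\min}$; the upper bound is the dual statement, which I would prove by the same doubling induction as Lemma \ref{lem-branches} but with the inequalities reversed (at a branching node the number of extensions at most doubles while $\theta_\P$ increases by exactly one, and at a non-branching node both are preserved). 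This is the step I expect to require the most care, since Lemma \ref{lem-branches} as stated supplies only the lower bound.

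For $(\dagger)$, by Lemma \ref{lem-vlhom2} it suffices to treat a $\Pi^0_1$ class $\Q$ with $\#T_\Q\uh n=1$, say $T_\Q\uh n=\{\sigma\}$ with $|\sigma|=n$. Then $\#T_\Q\uh k\leq B_k\leq 2^{d_{\max}}$, and the estimates $\theta_\P(\tau)\leq\log\#T_\P\uh k+c'$ (for the maximizing $\tau$) and $\theta_\P(\sigma)\geq\log\#T_\P\uh n-c'$ give $d_{\max}\leq\log\bigl(\#T_\P\uh k/\#T_\P\uh n\bigr)+2c'$. Hence $\#T_\Q\uh k\leq 2^{2c'}\cdot\#T_\P\uh k/\#T_\P\uh n$, which is $(\dagger)$ with constant $2^{2c'}$.

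For $(\dagger\dagger)$, fix $\sigma\in T_\P$ with $|\sigma|=s$, put $\Q=\llb\sigma\rrb\cap\P$, and take $k\geq n\geq s$; here $\#T_\Q\uh k=B_k$ and $\#T_\Q\uh n=B_n$. Applying the upper bound to $B_n$ and the lower bound to $B_k$, together with the $c'$-estimates as above, yields $B_n\leq 2^{2c'}\cdot\#T_\P\uh n/\#T_\P\uh s$ and $B_k\geq 2^{-2c'}\cdot\#T_\P\uh k/\#T_\P\uh s$, so that $B_k/B_n\geq 2^{-4c'}\cdot\#T_\P\uh k/\#T_\P\uh n$. This rearranges to $(\dagger\dagger)$ with constant $2^{4c'}$. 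Taking $c=2^{4c'}$ then witnesses both conditions simultaneously, so $\P$ is VL-homogeneous.
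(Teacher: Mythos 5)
Your proof is correct and follows essentially the same route as the paper's: both convert additive homogeneity into multiplicative control via Lemma \ref{lem-ahom}, sandwich the number of extensions of $\sigma$ at level $k$ between $2^{m_k}$ and $2^{M_k}$ using Lemma \ref{lem-branches}, and then invoke Lemma \ref{lem-vlhom2}. If anything, your write-up is tighter in two spots where the paper is informal: you prove the upper bound $B_k\leq 2^{d_{\max}}$ by an explicit dual induction (the paper infers it from Lemma \ref{lem-branches}(ii) via a ``most extreme case'' argument), and you verify $(\dagger\dagger)$ for all $n\geq|\sigma|$ via the ratio $B_k/B_n$, whereas the paper only establishes the two-sided bound at $n=|\sigma|$ and leaves the general case implicit.
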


\begin{proof}
Let $\P$ be an a-homogeneous $\Pi^0_1$ class.  By Lemma \ref{lem-vlhom2} it suffices to show that for all $n$, all $k\geq n$, and $\sigma\in T_\P\uh n$, there is some $d\in\omega$ such that
\[
\frac{1}{d}\cdot\frac{\#T_\P\uh k}{\#T_\P\uh n}\leq\#T_{\P\cap\llb\sigma\rrb}\uh k\leq d\cdot\frac{\#T_\P\uh k}{\#T_\P\uh n}.
\]
Fix such $n,k$ and $\sigma$.  Since $\P$ is a-homogeneous, by Lemma \ref{lem-ahom}, there is some $c$ such that
\[
\bigl|\theta_\P(\sigma)-\log\# T_\P\uh n\bigr|\leq c,
\]
from which it follows that
\begin{equation}\label{eq-ahom1}
2^{\theta_\P(\sigma)-c}\leq\# T_\P\uh n\leq2^{\theta_\P(\sigma)+c}.
\end{equation}
Now let 
\[
m_k=\min\{\theta_\P(\tau)-\theta_\P(\sigma)\colon \tau\succ\sigma \;\&\; \tau\in T_\P\uh k\}
\]
and
\[
M_k=\max\{\theta_\P(\tau)-\theta_\P(\sigma)\colon \tau\succ\sigma \;\&\; \tau\in T_\P\uh k\}.
\]
Then since every extension $\tau$ of $\sigma$ in $T_\P\uh k$ satisfies $\theta_\P(\tau)\geq\theta_\P(\sigma)+m_k$, by Lemma \ref{lem-branches}(i), we have $\#T_{\P\cap\llb\sigma\rrb}\uh k\geq 2^{m_k}$.  Moreover, since every extension $\tau$ of $\sigma$ in $T_\P\uh k$ satisfies $\theta_\P(\tau)\leq \theta_\P(\sigma)+M_k$, in the most extreme case, we have $\theta_\P(\tau)= \theta_\P(\sigma)+M_k$ for all such $\tau$.  It then follows from Lemma \ref{lem-branches}(ii) that $\#T_{\P\cap\llb\sigma\rrb}\uh k= 2^{M_k}$.  Given that this is the most extreme case, it follows that $\#T_{\P\cap\llb\sigma\rrb}\uh k\leq 2^{M_k}$ in general.   Summing up, we have
\begin{equation}\label{eq-ahom2}
2^{m_k}\leq \#T_{\P\cap\llb\sigma\rrb}\uh k\leq 2^{M_k}.
\end{equation}
Note by a-homogeneity, it follows that $M_k\leq m_k+c$, which combined with (\ref{eq-ahom2}) yields
\begin{equation}\label{eq-ahom2.5}
2^{m_k}\leq \#T_{\P\cap\llb\sigma\rrb}\uh k\leq 2^{m_k+c}.
\end{equation}
Next, if $\tau\in T_{\P\cap\llb\sigma\rrb}\uh k$, then by a-homogeneity and the definition of $m_k$ and $M_k$, we have
\begin{equation}\label{eq-ahom3}
\theta_\P(\sigma)+m_k-c\leq\theta_\P(\tau)\leq\theta_\P(\sigma)+M_k+c.
\end{equation}
Again, by a-homogeneity, we have
\[
\bigl|\theta_\P(\tau)-\log\# T_\P\uh k\bigr|\leq c.
\]
from which it follows that
\[
2^{\theta_\P(\tau)-c}\leq\# T_\P\uh k\leq2^{\theta_\P(\tau)+c}.
\]
Combined with (\ref{eq-ahom3}), this yields
\[
2^{\theta_\P(\sigma)+m_k-2c}\leq\# T_\P\uh k\leq2^{\theta_\P(\sigma)+M_k+2c}.
\]
and hence
\begin{equation}\label{eq-ahom4}
2^{\theta_\P(\sigma)+m_k-2c}\leq\# T_\P\uh k\leq2^{\theta_\P(\sigma)+m_k+3c},
\end{equation}
since $M_k\leq m_k+c$. Now we have by (\ref{eq-ahom1}) and (\ref{eq-ahom4})
\[
\frac{2^{\theta_\P(\sigma)+m_k-2c}}{2^{\theta_\P(\sigma)+c}}\leq\frac{\# T_\P\uh k}{\# T_\P\uh n}\leq\frac{2^{\theta_\P(\sigma)+m_k+3c}}{2^{\theta_\P(\sigma)-c}},
\]
which simplifies to
\begin{equation}\label{eq-ahom5}
2^{m_k-3c}\leq\frac{\# T_\P\uh k}{\# T_\P\uh n}\leq 2^{m_k+4c}.
\end{equation}
Then by the second inequality in (\ref{eq-ahom2.5}) and the first inequality in (\ref{eq-ahom5})
\[
\#T_{\P\cap\llb\sigma\rrb}\uh k\leq 2^{m_k+c}\leq 2^{4c}\frac{\# T_\P\uh k}{\# T_\P\uh n}.
\]
Similarly, by the first inequality in (\ref{eq-ahom2.5}) and the last inequality in (\ref{eq-ahom5})
\[
\frac{\# T_\P\uh k}{\# T_\P\uh n}\leq 2^{m_k+4c}\leq 2^{4c}\#T_{\P\cap\llb\sigma\rrb}\uh k,
\]
which, when setting $d=2^{4c}$, establishes the conclusion.
\end{proof}

\begin{theorem}\label{thm-big}
Let $\P$ be an a-homogeneous $\Pi^0_ 1$ class and suppose that $\lambda_\P$ is defined for all $\sigma\in T_\P$.  Then for $X\in\P$, the following are equivalent:
\begin{itemize}
\item[(a)] $X$ is a randomly produced path through $T_\P$.
\item[(b)] $X$ is $T_\P$-incompressible.
\item[(c)] $X$ is globally random in $\P$.
\end{itemize}
\end{theorem}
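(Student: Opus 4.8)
The plan is to observe that this theorem is essentially a synthesis of three results already established earlier in this subsection, so the work reduces to stitching them together and verifying that the hypotheses of each transfer correctly. The three conditions (a), (b), (c) will be linked by establishing (a)~$\Leftrightarrow$~(b) directly and (b)~$\Leftrightarrow$~(c) via a detour through VL-homogeneity.

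First I would establish the equivalence (a)~$\Leftrightarrow$~(b). This requires no new argument, since Theorem~\ref{ahom-med} states precisely that for an a-homogeneous $\Pi^0_1$ class $\P$, an element $X\in\P$ is a randomly produced path through $T_\P$ if and only if $X$ is $T_\P$-incompressible. That theorem is itself an immediate consequence of Theorem~\ref{thm-rpp-isc} (the initial segment complexity characterization of randomly produced paths in terms of $\theta_\P$) combined with Lemma~\ref{lem-ahom} (which, for a-homogeneous classes, controls the difference $|\theta_\P(\sigma)-\log\#T_\P\uh|\sigma||$ by a constant). Since $\P$ is assumed a-homogeneous, this step applies immediately.

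Next I would establish the equivalence (b)~$\Leftrightarrow$~(c). The key intermediate fact is Theorem~\ref{thm-ahom-svlhom}, which guarantees that every a-homogeneous $\Pi^0_1$ class is VL-homogeneous; hence $\P$ is VL-homogeneous. Combined with the standing hypothesis that $\lambda_\P$ is defined for all $\sigma\in T_\P$, the hypotheses of Theorem~\ref{thm-vlhomequiv} are met. That theorem then yields exactly that $X$ is globally random in $\P$ if and only if $X$ is $T_\P$-incompressible, giving (b)~$\Leftrightarrow$~(c).

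Chaining these two equivalences gives (a)~$\Leftrightarrow$~(b)~$\Leftrightarrow$~(c), completing the proof. There is no genuine obstacle here: the substantive combinatorial content lives in Lemma~\ref{lem-branches} and Lemma~\ref{lem-ahom}, and the measure-theoretic content lives in Lemma~\ref{lem-upperlower} (used to prove Theorem~\ref{thm-vlhomequiv}). The only point requiring care is confirming that a-homogeneity is strong enough to supply both the direct $\theta_\P$-versus-$\log\#T_\P\uh n$ control needed for (a)~$\Leftrightarrow$~(b) and the implication to VL-homogeneity needed for (b)~$\Leftrightarrow$~(c); both are already in hand. It is worth noting that the hypothesis that $\lambda_\P$ is defined cannot be dropped, since the VL-homogeneous (indeed weakly $4$-homogeneous) class of Example~\ref{ex3} has $\lambda_\P$ undefined and fails the equivalence with global randomness.
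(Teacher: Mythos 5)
Your proposal is correct and follows essentially the same route as the paper, which proves Theorem~\ref{thm-big} by combining Theorem~\ref{thm-ahom-svlhom} (a-homogeneity implies VL-homogeneity) with Theorem~\ref{thm-vlhomequiv} for (b)~$\Leftrightarrow$~(c), and relies on Theorem~\ref{ahom-med} for (a)~$\Leftrightarrow$~(b). In fact you are slightly more careful than the paper's one-line proof, which cites only Theorems~\ref{thm-vlhomequiv} and~\ref{thm-ahom-svlhom} and leaves the needed appeal to Theorem~\ref{ahom-med} implicit.
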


\begin{proof}
This follows immediately from Theorems \ref{thm-vlhomequiv} and \ref{thm-ahom-svlhom}.
\end{proof}

In the case that $\lambda_\P$ is not defined for an a-homogeneous class $\P$, we still can prove the following:

\begin{theorem}
Let $\P$ be an a-homogeneous $\Pi^0_ 1$ class.  Then for $X\in\P$, the following are equivalent:
\begin{itemize}
\item[(a)] $X$ is a randomly produced path through $T_\P$.
\item[(b)] $X$ is $T_\P$-incompressible.
\item[(c)] $K^{T_\P}(X\uh n)\geq -\log\lambda^+_\P(X\uh n)-O(1)$ for all $n$.
\item[(d)] $K^{T_\P}(X\uh n)\geq -\log\lambda^-_\P(X\uh n)-O(1)$ for all $n$.
\end{itemize}
\end{theorem}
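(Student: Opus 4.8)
The plan is to reduce the whole statement to the already-established equivalence of (a) and (b), and then to bridge the initial-segment complexity threshold $\log\#T_\P\uh n$ that defines $T_\P$-incompressibility with the two thresholds $-\log\lambda_\P^+(X\uh n)$ and $-\log\lambda_\P^-(X\uh n)$ appearing in (c) and (d). Note first that the equivalence (a) $\iff$ (b) is exactly Theorem \ref{ahom-med}, which needs only a-homogeneity and makes no assumption about whether $\lambda_\P$ is defined. So the remaining work is to show (b) $\iff$ (c) and (b) $\iff$ (d).

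The key step is to pass through VL-homogeneity. By Theorem \ref{thm-ahom-svlhom}, every a-homogeneous $\Pi^0_1$ class is VL-homogeneous, so Lemma \ref{lem-upperlower} applies to $\P$. I would argue that, \emph{regardless} of whether $\lambda_\P(\sigma)$ is defined, a single constant bounds both $\bigl|-\log\lambda_\P^+(\sigma)-\log\#T_\P\uh|\sigma|\bigr|$ and $\bigl|-\log\lambda_\P^-(\sigma)-\log\#T_\P\uh|\sigma|\bigr|$. Indeed, at those $\sigma$ where $\lambda_\P(\sigma)$ is defined we have $\lambda_\P^+(\sigma)=\lambda_\P^-(\sigma)=\lambda_\P(\sigma)$, so part (i) of Lemma \ref{lem-upperlower} delivers the bound; at those $\sigma$ where $\lambda_\P(\sigma)$ is undefined, part (ii) delivers it directly. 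Since the bound in both cases is $\log(c)$ for the fixed VL-homogeneity constant $c$, it is uniform over all $\sigma$, and I may conclude
\[
-\log\lambda_\P^+(\sigma)=\log\#T_\P\uh|\sigma|+O(1)\quad\text{and}\quad -\log\lambda_\P^-(\sigma)=\log\#T_\P\uh|\sigma|+O(1).
\]

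Finally, substituting $\sigma=X\uh n$, the threshold $\log\#T_\P\uh n$ in Definition \ref{def:gi} differs from each of $-\log\lambda_\P^+(X\uh n)$ and $-\log\lambda_\P^-(X\uh n)$ by an additive constant independent of $n$. Hence $K^{T_\P}(X\uh n)\geq\log\#T_\P\uh n-O(1)$ for all $n$ holds if and only if $K^{T_\P}(X\uh n)\geq -\log\lambda_\P^+(X\uh n)-O(1)$ for all $n$, and likewise with $\lambda_\P^-$ in place of $\lambda_\P^+$; these give (b) $\iff$ (c) and (b) $\iff$ (d), closing the chain. The only point requiring care is that the additive constants stay uniform in $n$ as one passes between the two cases of Lemma \ref{lem-upperlower} along a single sequence $X$; this is immediate here because the same VL-homogeneity constant governs both cases, so no step of the argument accumulates an unbounded error.
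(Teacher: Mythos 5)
Your proof is correct, and it uses the same essential ingredients as the paper's: Theorem \ref{ahom-med} for (a) $\iff$ (b), Theorem \ref{thm-ahom-svlhom} to get VL-homogeneity, and Lemma \ref{lem-upperlower} to tie $-\log\lambda_\P^{\pm}$ to $\log\#T_\P\uh n$. The organization differs in a way worth noting. The paper splits globally into two cases: if $\lambda_\P$ is defined (everywhere), it quotes Theorem \ref{thm-big} wholesale; if $\lambda_\P$ is not defined, it applies Lemma \ref{lem-upperlower}(ii) together with Theorem \ref{ahom-med}. You instead avoid Theorem \ref{thm-big} entirely and argue per-string: at each $\sigma$ you invoke whichever part of Lemma \ref{lem-upperlower} applies, observing that both parts yield the same constant $\log(c)$, so the comparison $-\log\lambda_\P^{\pm}(\sigma)=\log\#T_\P\uh|\sigma|+O(1)$ holds uniformly. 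This is slightly cleaner and, in fact, more careful than the paper on one point: the paper's dichotomy is ambiguous when $\lambda_\P$ is defined at some strings but not at others (Lemma \ref{lem-upperlower}(ii) as stated applies only at strings where $\lambda_\P(\sigma)$ is undefined, yet the paper's second case applies it to all $\sigma$), whereas your per-$\sigma$ treatment handles this mixed situation explicitly. One small remark: even if one did not check that the constants in parts (i) and (ii) of the lemma agree, taking the maximum of the two $O(1)$ constants would suffice for uniformity, so your ``only point requiring care'' is even less delicate than you suggest.
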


\begin{proof}
If $\lambda_\P$ is defined, then $\lambda_\P^+=\lambda_\P^-$ and the result immediately follows from Theorem \ref{thm-big}.  If $\lambda_\P$ is not defined, then by Lemma \ref{lem-upperlower}(ii), we have
\[
\Bigl|-\log\lambda_\P^+(\sigma)-\log\#T_\P\uh|\sigma|\Bigr|\leq O(1)
\]
and
\[
\Bigl|-\log\lambda_\P^-(\sigma)-\log\#T_\P\uh|\sigma|\Bigr|\leq O(1).
\]
In particular, we also have
\[
\Bigl|-\log\lambda_\P^+(\sigma)-(-\log\lambda_\P^-(\sigma))\Bigr|\leq O(1).
\]
The equivalence then follows from the above inequalities and Theorem \ref{ahom-med}.
\end{proof}

The relationship between the various notions of homogeneity can be summed up in Figure \ref{fig1} (where the absence of an implication arrow means that implication in question does not hold). In addition, the main properties of the various notions of homogeneity, whether $\lambda_\P$ is always defined and whether all randomness notions coincide in the case that $\lambda_\P$ is defined, are summed up in Table \ref{table1}

\begin{figure}[h]
\captionsetup{justification=centering}
\begin{center}
\includegraphics[scale=1.0]{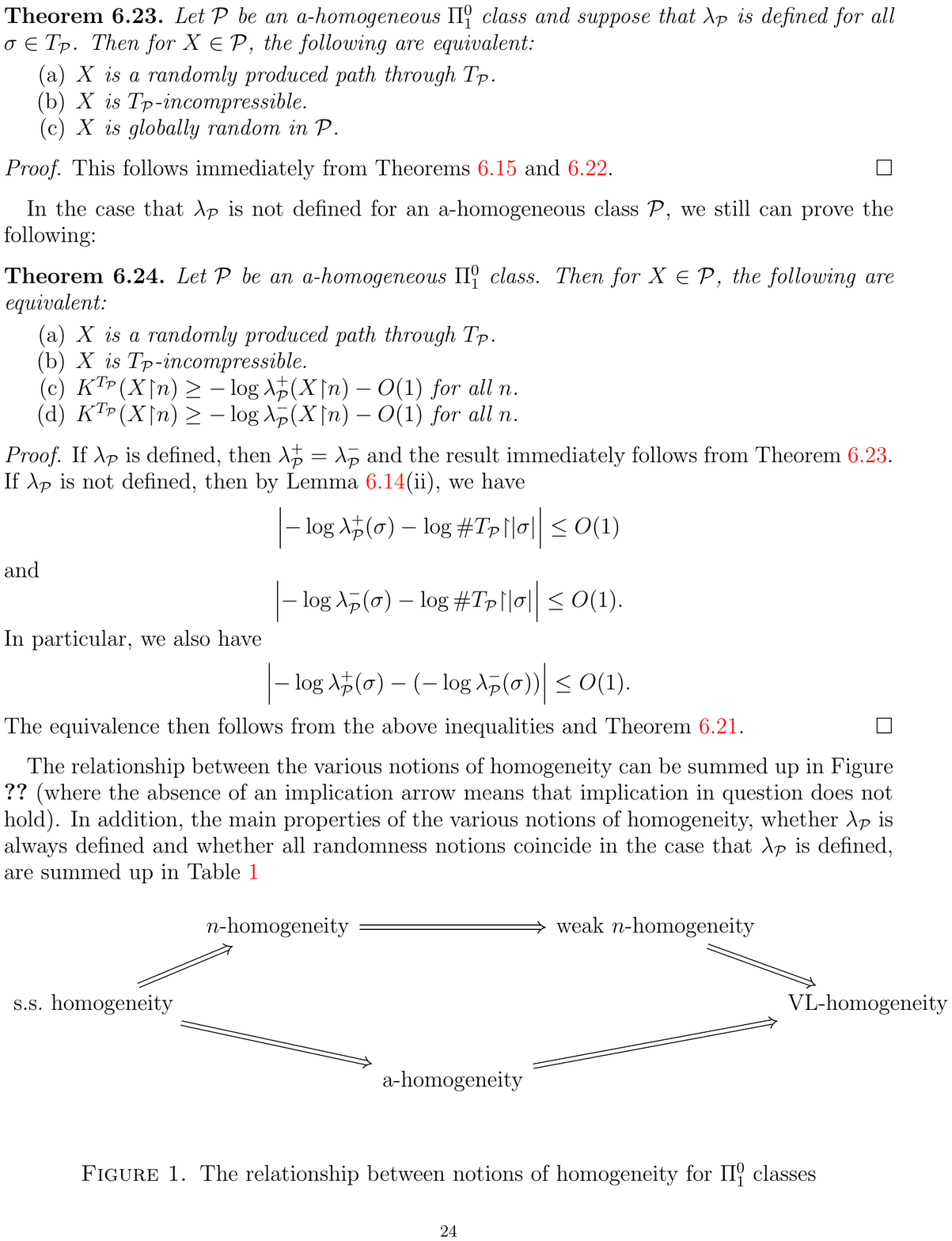}
\end{center}
\label{fig1}
\caption{The relationship between notions of homogeneity for $\Pi^0_1$ classes}
\end{figure}


\begin{table}
\centering
\begin{tabular}{|c|c|c|}
\hline
& $\lambda_\P$ always defined & all randomness notions coincide\\
\hline
s.s.\ homogeneity & Y & Y \\
\hline
$n$-homogeneity& N & N\\
\hline
weak $n$-homogeneity& N & N\\
\hline
a-homogeneity & N & Y\\
\hline
VL-homogeneity & N & N\\
\hline
\end{tabular}
\caption{Main properties of the various notions of homogeneity}
\label{table1}
\end{table}

\section{$\Pi^0_1$ classes of positive measure} \label{sec-Positive}

We conclude with a discussion of notions of random members of certain $\Pi^0_1$ classes of positive measure.
We begin with a basic fact.

\begin{lemma} \label{lem9}
If $\P$ is a $\Pi^0_1$ class of positive measure, then $\P$ satisfies the condition ($\dagger$) in the definition of VL-homogeneity.
\end{lemma}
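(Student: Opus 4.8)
The plan is to exhibit a single constant $c$, depending only on $\lambda(\P)$, that witnesses $(\dagger)$ simultaneously for every $\Pi^0_1$ subclass $\Q \subseteq \P$ and all $k \geq n$. The natural candidate is $c = \lceil 1/\lambda(\P)\rceil$; the positive measure of $\P$ is precisely what forces $T_\P$ to branch enough at every level to dominate the branching of any subclass, uniformly in $\Q$.

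First I would record two elementary counting facts. On the subclass side, since $T_\Q$ is closed downward under $\preceq$, every string in $T_\Q\uh k$ has its length-$n$ prefix in $T_\Q\uh n$, and each such prefix has at most $2^{k-n}$ extensions of length $k$; hence
\[
\frac{\# T_\Q\uh k}{\# T_\Q\uh n} \leq 2^{k-n}.
\]
On the side of $\P$, the clopen set $\llb T_\P\uh k\rrb$ contains $\P$ and is a disjoint union of $\# T_\P\uh k$ basic intervals each of measure $2^{-k}$, so $\# T_\P\uh k \cdot 2^{-k} = \lambda(\llb T_\P\uh k\rrb) \geq \lambda(\P)$, giving $\# T_\P\uh k \geq \lambda(\P)\, 2^k$; together with the trivial bound $\# T_\P\uh n \leq 2^n$ this yields
\[
\frac{\# T_\P\uh k}{\# T_\P\uh n} \geq \lambda(\P)\, 2^{k-n}.
\]

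Combining the two displays gives
\[
\frac{\# T_\Q\uh k}{\# T_\Q\uh n} \leq 2^{k-n} \leq \frac{1}{\lambda(\P)}\cdot \frac{\# T_\P\uh k}{\# T_\P\uh n},
\]
so $(\dagger)$ holds with $c = \lceil 1/\lambda(\P)\rceil \in \omega$. I do not anticipate a genuine obstacle here; the only point requiring care is that $c$ must be uniform across all subclasses $\Q$, and this is exactly what the measure lower bound $\# T_\P\uh k \geq \lambda(\P)\,2^k$ secures. That bound constrains the branching of $T_\P$ from below independently of $\Q$, while every subclass trivially branches no faster than the full binary tree, so the ratio of growth rates is controlled by $1/\lambda(\P)$ regardless of which $\Q$ is chosen.
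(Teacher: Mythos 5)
Your proof is correct and takes essentially the same approach as the paper's: both arguments rest on the measure bound $\#T_\P\uh k \geq \lambda(\P)\,2^k$, the trivial bound $\#T_\P\uh n \leq 2^n$, and the observation that any subclass satisfies $\#T_\Q\uh k/\#T_\Q\uh n \leq 2^{k-n}$. The paper merely packages these inequalities as a proof by contradiction, with the constant written as $2^c$ where $\lambda(\P) > 2^{-c}$ rather than your $\lceil 1/\lambda(\P)\rceil$; the difference is purely cosmetic.
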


\begin{proof}
Let $\lambda(\P)>2^{-c}$.
We claim that for every $n,m$,
\[
\dfrac{\# T_\Q\uh (n+m)}{\# T_\Q\uh n}\leq 2^c\cdot\dfrac{\# T_\P\uh (n+m)}{\# T_\P\uh n}.
\]
To show this, we will prove that for every $n,m$,
\begin{equation}\label{eq-posmeas}
\#T_{\P}\uh (n+m)\geq 2^{m-c}\#T_{\P}\uh n.
\end{equation}
Suppose not.  Then there are $n,m$ such that 
\[
\#T_{\P}\uh (n+m)<2^{m-c}\#T_{\P}\uh n.
\]
Then, noting that for any $\Pi^0_1$ class $\Q$, we have $\lambda(\Q)\leq 2^{-k}\cdot\#T_{\Q}\uh k$ for every $k\in\omega$, it follows that
\[
\lambda(\P)\leq 2^{-(n+m)}\cdot\#T_{\P}\uh (n+m)<2^{-(n+c)}\cdot\#T_{\P}\uh n\leq 2^{-(n+c)}\cdot 2^n=2^{-c},
\]
which contradicts our original assumption.  The conclusion now follows from (\ref{eq-posmeas}) and the fact that
\[
\dfrac{\# T_\Q\uh (n+m)}{\# T_\Q\uh n}\leq 2^m.
\]
\end{proof}

 For an example of a $\Pi^0_1$ class of positive measure that fails to satisfy condition ($\dagger\dagger$), consider $\P\oplus \Q$, where $\P=\cs$ and $\Q=\{0^\omega\}$.
 
We now can consider the random members of various $\Pi^0_1$ classes of positive measure.  Our discussion makes use of the notion of \emph{lower dyadic density}. We define the lower dyadic density of $\P$ at $X$ to be
\[
\rho(\P\mid X)=\liminf_{n\rightarrow\infty}\lambda(\P\mid X\uh n)=\liminf_{n\rightarrow\infty}\dfrac{\lambda(\P\cap\llb X\uh n\rrb)}{\lambda(X\uh n)}.
\]
We note that the term ``lower density" is used in the context of effectively closed subsets of $[0,1]$, while ``lower dyadic density" is used in the contexts of effective closed subsets of $\cs$. The lower density of a point in a given effectively closed class has been of considerable interest recently.  See, for instance, \cite{BHMN14}, \cite{BGKNT16}, and \cite{K16}.


Recall that $\MLR^{T_\P}$ is simply the collection of sequences that are $\lambda$-Martin-L\"of random relative to $T_\P$.

\begin{theorem}\label{thm-pos}
Let $\P$ be a $\Pi^0_1$ class of positive measure.  For $X$ of positive lower dyadic density in $\P$, the following are equivalent:
\begin{itemize}
\item[(i)]  $X\in\MLR^{T_\P}$;
\item[(ii)] $K^{T_\P}(X\uh n)\geq -\log\lambda_\P(X\uh n)-0(1)$ for all $n\in\omega$;
\item[(iii)] $K^{T_\P}(X\uh n)\geq \log \#T_\P\uh n-O(1)$ for all $n\in\omega$.
\end{itemize}
\end{theorem}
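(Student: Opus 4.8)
The plan is to show that each of (i), (ii), and (iii) is equivalent to the single condition $K^{T_\P}(X\uh n)\geq n-O(1)$; the three stated thresholds will all turn out to equal $n+O(1)$. For (i) this is immediate from the relativized Levin--Schnorr theorem (Theorem \ref{thm-levinschnorr}): since $\lambda(X\uh n)=2^{-n}$, we have $X\in\MLR^{T_\P}$ if and only if $K^{T_\P}(X\uh n)\geq-\log\lambda(X\uh n)-O(1)=n-O(1)$. It therefore suffices to establish the two asymptotic estimates $\log\#T_\P\uh n=n+O(1)$ and $-\log\lambda_\P(X\uh n)=n+O(1)$.

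For the first estimate I would use only that $\lambda(\P)>0$. Since $T_\P$ has no dead ends, each node of $T_\P\uh n$ has between one and two successors in $T_\P\uh{(n+1)}$, so $\#T_\P\uh n\leq\#T_\P\uh{(n+1)}\leq 2\,\#T_\P\uh n$, and hence $2^{-n}\#T_\P\uh n=\lambda(\llb T_\P\uh n\rrb)$ is nonincreasing in $n$. Because $\P=\bigcap_n\llb T_\P\uh n\rrb$ is a decreasing intersection of clopen sets, continuity of measure from above gives $2^{-n}\#T_\P\uh n\to\lambda(\P)$. Thus $\lambda(\P)\leq 2^{-n}\#T_\P\uh n\leq 1$ for every $n$, and taking logarithms yields $\bigl|\log\#T_\P\uh n-n\bigr|\leq|\log\lambda(\P)|=O(1)$. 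This identifies the threshold in (iii) as $n+O(1)$, so that the equivalence of (i) and (iii) in fact holds for every $X$ under the mere hypothesis of positive measure.

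For the second estimate I would invoke the positive lower dyadic density of $X$. Since $\lambda(\P)>0$, we have $\lambda_\P(\sigma)=\lambda(\llb\sigma\rrb\cap\P)/\lambda(\P)$, so $-\log\lambda_\P(X\uh n)=-\log\lambda(\llb X\uh n\rrb\cap\P)+O(1)$. On one hand $\lambda(\llb X\uh n\rrb\cap\P)\leq\lambda(X\uh n)=2^{-n}$, giving $-\log\lambda(\llb X\uh n\rrb\cap\P)\geq n$. On the other hand, the density ratio is $\lambda(\llb X\uh n\rrb\cap\P)/\lambda(X\uh n)=2^{n}\lambda(\llb X\uh n\rrb\cap\P)$, and the hypothesis $\rho(\P\mid X)>0$ means this quantity is eventually bounded below by a positive constant; consequently $-\log\lambda(\llb X\uh n\rrb\cap\P)\leq n+O(1)$ for all large $n$, and the finitely many remaining terms are absorbed into the constant. (Positivity of the $\liminf$ also forces $\lambda(\llb X\uh n\rrb\cap\P)>0$ for every $n$, whence $X\uh n\in T_\P$ for all $n$, so $X\in\P$ and every displayed quantity is well defined.) Combining the two bounds gives $-\log\lambda_\P(X\uh n)=n+O(1)$, identifying the threshold in (ii) as $n+O(1)$ as well.

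Putting the pieces together, conditions (ii) and (iii) each read $K^{T_\P}(X\uh n)\geq n-O(1)$, which is exactly condition (i); hence all three are equivalent. I do not expect a genuine obstacle here, as the argument is essentially bookkeeping of the $O(1)$ terms. The one point requiring care is the two-sided estimate for (ii): the upper bound on $-\log\lambda_\P(X\uh n)$ is precisely where the positive lower dyadic density hypothesis enters (the lower bound, and with it the implication from (i) to (ii), would survive without it), and one must also confirm that every initial segment of $X$ lies in $T_\P$ so that the conditional measure is defined along $X$.
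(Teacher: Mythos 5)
Your proof is correct, but it is organized differently from the paper's. The paper proves the cycle (i)$\Rightarrow$(ii)$\Rightarrow$(iii)$\Rightarrow$(i): the first leg uses the density hypothesis exactly as you do, the last leg uses $\#T_\P\uh n\geq 2^n\lambda(\P)$ exactly as you do, but the middle leg (ii)$\Rightarrow$(iii) is obtained by invoking Lemma \ref{lem9} (positive measure implies condition ($\dagger$)) together with the proof of Lemma \ref{lem-upperlower} to get $-\log\lambda_\P(X\uh n)\geq\log\#T_\P\uh n-O(1)$. You bypass that VL-homogeneity machinery entirely: by adding the trivial reverse bounds $\#T_\P\uh n\leq 2^n$ and $\lambda(\llb X\uh n\rrb\cap\P)\leq 2^{-n}$, you show that both thresholds equal $n+O(1)$, so that (ii) and (iii) each collapse to the Levin--Schnorr characterization of (i). This is more elementary and more informative: it makes visible that (i)$\Leftrightarrow$(iii) holds for \emph{every} $X$ under positive measure alone, and that the density hypothesis is needed only to handle $\lambda_\P$ along $X$; the paper's route, by contrast, reuses lemmas already developed for the homogeneity framework and so keeps the positive-measure case tied to the ($\dagger$) condition.

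One small correction to a side remark: in your parenthetical you say the lower bound $-\log\lambda_\P(X\uh n)\geq n-O(1)$ carries ``the implication from (i) to (ii)'' without the density hypothesis. The directions are reversed. A lower bound on the threshold makes (ii) the \emph{stronger} condition, so it is (ii)$\Rightarrow$(i) that survives without density; the upper bound $-\log\lambda_\P(X\uh n)\leq n+O(1)$, which is where density enters, is precisely what yields (i)$\Rightarrow$(ii) --- consistent with the paper, whose only use of density is in its (i)$\Rightarrow$(ii) leg. This slip does not affect the proof of the theorem itself, since you establish both bounds.
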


\begin{proof}
(i) $\Rightarrow$ (ii) Since $X$ has positive lower dyadic density in $\P$, there is some $c$ and some $N$ such that for all $n\geq N$, 
\[
\frac{\lambda(\llb X\uh n\rrb\cap\P)}{\lambda(X\uh n)}>2^{-c}
\]
which implies that $\lambda(\llb X\uh n\rrb\cap\P)\geq 2^{-(n+c)}$ for almost every $n$.  From this we can conclude that
\[
\lambda_\P(X\uh n)=\frac{\lambda(\llb X\uh n\rrb\cap\P)}{\lambda(\P)}\geq \frac{2^{-(n+c)}}{\lambda(\P)}
\]
and thus
\[
n\geq -\log\lambda_\P(X\uh n)-O(1)
\]
for every $n$.  Then by the relativized Levin-Schnorr theorem,
\[
K^{T_\P}(X\uh n)\geq n-O(1)\geq -\log\lambda_\P(X\uh n)-O(1).
\]
(ii)$\Rightarrow$(iii)  Since $\lambda(\P)>0$, it follows from Lemma \ref{lem9}  that $\P$ satisfies condition ($\dagger$) in the definition of VL-homogeneity.  By the proof of Lemma \ref{lem-upperlower} , the satisfaction of condition ($\dagger$) implies that
\[
\lambda( X\uh n\mid\P)\leq c\cdot\dfrac{1}{\# T_\P\uh n}
\]
for some $c\in\omega$.  This implies that 
\[
-\log\lambda_\P(X\uh n)\geq \log\#T_\P\uh n-O(1)
\]
for every $n$, from which the desired implication immediately follows.

(iii)$\Rightarrow$(i) Let $c\in\omega$ satisfy $\lambda(\P)>2^{-c}$.  For $n\in\omega$, since $\lambda(\P)\leq 2^{-n}\cdot\#T_{\P}\uh n$, it follows that $\#T_{\P}\uh n\geq 2^{n-c}$.  Thus $\log\#T_\P\uh n\geq n-c$ and hence
\[
K^{T_\P}(X\uh n)\geq \log\#T_\P\uh n-O(1)\geq n-O(1),
\]
from which it follows that $X$ is $T_\P$-random.

\end{proof}

By Propositions \ref{prop-pos1} and \ref{prop-pos2} below, we need an additional hypothesis on $X$ to include the property of being a randomly produced path through $T_\P$ with the conditions (i)-(iii) in Theorem \ref{thm-pos}.

A number of corollaries follow immediately from Theorem \ref{thm-pos}.

\begin{corollary}\label{thm-2ran}
Let $\P$ be a decidable $\Pi^0_1$ class. For $X$ of positive lower dyadic density in $\P$, the following are equivalent:
\begin{itemize}
\item[(i)] $X$ is Martin-L\"of random;
\item[(ii)] $K(X\uh n)\geq -\log\lambda_\P(X\uh n)-0(1)$ for all $n\in\omega$;
\item[(iii)] $K(X\uh n)\geq \log \#T_\P\uh n-O(1)$ for all $n\in\omega$.
\end{itemize}
\end{corollary}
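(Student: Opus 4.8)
The plan is to deduce this corollary directly from Theorem \ref{thm-pos} by observing that relativization to $T_\P$ becomes vacuous when $\P$ is decidable. First I would recall that a $\Pi^0_1$ class $\P$ is decidable precisely when its tree of extendible nodes $T_\P$ is computable. For such a $\P$, every $T_\P$-computable object is already computable, so the oracle $T_\P$ confers no additional computational power, and in particular $\lambda_\P$ itself is computable.

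Next I would make this precise for the two effective notions appearing in the statement of Theorem \ref{thm-pos}. For prefix-free complexity, a computable oracle can be simulated internally by the universal prefix-free machine, so $K^{T_\P}(\sigma) = K(\sigma) + O(1)$ for all $\sigma$; this additive constant is absorbed into the $O(1)$ terms already present in conditions (ii) and (iii). For Martin-L\"of randomness, when $T_\P$ is computable every $T_\P$-c.e.\ open set is c.e.\ and conversely, so the universal $T_\P$-test and the universal (unrelativized) test cover the same sequences; hence $\MLR^{T_\P} = \MLR$. Consequently condition (i) of Theorem \ref{thm-pos}, namely $X \in \MLR^{T_\P}$, is equivalent to $X$ being Martin-L\"of random.

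Finally I would invoke Theorem \ref{thm-pos}: for $X$ of positive lower dyadic density in the positive-measure class $\P$, the three conditions listed there are equivalent, and under the identifications $K^{T_\P} = K + O(1)$ and $\MLR^{T_\P} = \MLR$ these become exactly conditions (i)--(iii) of the present corollary. (Note that a decidable $\Pi^0_1$ class need not have positive measure, but the lower dyadic density and the complexity inequalities are only nontrivial where $\P$ has positive measure near $X$, and Theorem \ref{thm-pos} is applied within that setting.) There is essentially no obstacle here beyond confirming the standard fact that relativization to a computable oracle leaves both $K$ and the collection of Martin-L\"of random sequences unchanged up to the additive constants the statement already permits; the corollary is then immediate.
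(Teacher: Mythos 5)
Your proof is correct and takes essentially the same route as the paper, which states this corollary as an immediate consequence of Theorem \ref{thm-pos}: decidability makes $T_\P$ computable, so $\MLR^{T_\P}=\MLR$ and $K^{T_\P}=K+O(1)$, and the positive lower dyadic density hypothesis at $X$ already forces $\lambda(\P)>0$, so Theorem \ref{thm-pos} applies. One inessential caveat: your side claim that $\lambda_\P$ is computable is not actually guaranteed (a decidable class can have merely right-c.e., non-computable measure), but nothing in your argument uses it, since Theorem \ref{thm-pos} treats $\lambda_\P$ as a bare real-valued function.
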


\begin{corollary}\label{cor-2ran}
Let $\P$ be a $\Pi^0_1$ class of positive measure such that $T_\P\equiv_T\emptyset'$.   For $X\in\P$, the following are equivalent:
\begin{itemize}
\item[(i)] $X$ is 2-random, i.e., $X\in\MLR^{\emptyset'}$;
\item[(ii)] $K^{T_\P}(X\uh n)\geq -\log\lambda_\P(X\uh n)-0(1)$ for all $n\in\omega$;
\item[(iii)] $K^{T_\P}(X\uh n)\geq \log \#T_\P\uh n-O(1)$ for all $n\in\omega$.
\end{itemize}
\end{corollary}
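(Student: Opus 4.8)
The plan is to obtain this as a corollary of Theorem~\ref{thm-pos}, the point being that the hypothesis $T_\P\equiv_T\emptyset'$ allows the assumption of positive lower dyadic density to be dropped. First I would note that Martin-L\"of randomness relative to an oracle depends only on its Turing degree, so $T_\P\equiv_T\emptyset'$ gives $\MLR^{T_\P}=\MLR^{\emptyset'}$. Hence clause (i) here coincides with clause (i) of Theorem~\ref{thm-pos}, and clauses (ii) and (iii) are verbatim the same; the only discrepancy with Theorem~\ref{thm-pos} is that the density hypothesis on $X$ has been removed. It therefore suffices to revisit the proof of Theorem~\ref{thm-pos} and isolate exactly where that hypothesis was used.

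Inspecting that proof, the implications (ii)$\Rightarrow$(iii) and (iii)$\Rightarrow$(i) rely only on $\lambda(\P)>0$: the former runs through Lemma~\ref{lem9} and the computation in the proof of Lemma~\ref{lem-upperlower} to bound $\lambda_\P(X\uh n)\leq c/\#T_\P\uh n$, while the latter uses the estimate $\#T_\P\uh n\geq 2^{n-c}$ coming from $\lambda(\P)>2^{-c}$. Neither appeals to the density of $X$ in $\P$, so both hold for every $X\in\P$, yielding the chain (ii)$\Rightarrow$(iii)$\Rightarrow$(i) with no further hypotheses. The positive-density assumption entered only in proving (i)$\Rightarrow$(ii).

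To recover (i)$\Rightarrow$(ii) I would show that (i) by itself forces positive lower dyadic density, after which the argument for (i)$\Rightarrow$(ii) in Theorem~\ref{thm-pos} applies unchanged. This is the substantive point: the key input is the density theorem for $2$-randoms, namely that every $X\in\MLR^{\emptyset'}$ is a density-one point, so that $\rho(\P\mid X)=1$ for every $\Pi^0_1$ class $\P$ containing $X$ (see \cite{BHMN14}). In particular $\rho(\P\mid X)=1>0$, so $X$ has positive lower dyadic density in $\P$ and Theorem~\ref{thm-pos} delivers (i)$\Rightarrow$(ii). Combined with the unconditional chain of the previous paragraph, this gives the full cycle (i)$\Rightarrow$(ii)$\Rightarrow$(iii)$\Rightarrow$(i), as desired.

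I expect the density theorem to be the main obstacle: the equivalences are otherwise a direct re-run of Theorem~\ref{thm-pos}, but eliminating the explicit positive-density hypothesis rests essentially on the nontrivial fact that $2$-randomness already forces density one. That fact is proved by capturing the members of $\P$ whose lower density is bounded away from one by a Martin-L\"of test relative to $\emptyset'$; since $X\in\MLR^{\emptyset'}$ avoids every such test, it must be a density-one point, and in particular a point of positive lower dyadic density.
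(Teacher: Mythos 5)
Your proposal is correct and takes essentially the same route as the paper: the paper also derives the corollary from Theorem~\ref{thm-pos} by invoking the result of Bienvenu et al.\ \cite{BHMN14} that a 2-random member of a $\Pi^0_1$ class is a density-one point (together with Khan--Miller \cite{K16} to pass from lower density to lower \emph{dyadic} density, a translation you elide but which is harmless). If anything, your write-up is more careful than the paper's terse proof, since you explicitly observe that the implications (ii)$\Rightarrow$(iii) and (iii)$\Rightarrow$(i) in the proof of Theorem~\ref{thm-pos} use only $\lambda(\P)>0$ and not the density hypothesis on $X$ --- a point the paper leaves implicit but which is needed for the corollary to hold for \emph{all} $X\in\P$.
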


\begin{proof}
It follows from work of Bienvenu et al. in \cite{BHMN14} that if $X$ is 2-random and $X\in\P$, $X$ has lower density 1 in $\P$; moreover, Khan and Miller \cite{K16} showed that for Martin-L\"of random sequences, having lower density 1 and lower dyadic density one are equivalent.  Thus $X$ satisfies the hypothesis of Theorem \ref{thm-pos} and the conclusion follows.
\end{proof}

\begin{corollary}
Let $\P=\cs\setminus \hat{\U_i}$, where $(\hat{\U_i})_{i\in\omega}$ is a universal Martin-L\"of test.  Then for $X\in\P$, the conditions (i)-(iii) in Corollary \ref{cor-2ran} are equivalent.
\end{corollary}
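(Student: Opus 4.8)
The plan is to show that $\P=\cs\setminus\hat\U_i$ satisfies the two hypotheses of Corollary \ref{cor-2ran}, namely that $\lambda(\P)>0$ and that $T_\P\equiv_T\emptyset'$; the equivalence of conditions (i)--(iii) then follows at once.

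For the first hypothesis, since $(\hat\U_i)_{i\in\omega}$ is a test we have $\lambda(\hat\U_i)\leq 2^{-i}$, so $\lambda(\P)=1-\lambda(\hat\U_i)\geq 1-2^{-i}>0$ for $i\geq 1$. I would also record that $\P\subseteq\MLR$: by the defining property of a universal test, the complement of $\MLR$ is $\bigcap_{n}\hat\U_n\subseteq\hat\U_i$, so every $X\notin\hat\U_i$ is Martin-L\"of random. Finally, the easy half $T_\P\leq_T\emptyset'$ holds because $\sigma\notin T_\P$ iff $\llb\sigma\rrb\subseteq\hat\U_i$, and by compactness this is a $\Sigma^0_1$ condition (a finite stage of the enumeration of $\hat\U_i$ already covers the clopen set $\llb\sigma\rrb$); hence $T_\P$ is co-c.e.

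The substance is the reverse reduction $\emptyset'\leq_T T_\P$, and here I would pass through the leftmost path $\ell$ of $\P$. On one hand $\ell\leq_T T_\P$, since the leftmost path of an extendible-node tree is computed by the greedy rule ``move to $\sigma 0$ if $\sigma 0\in T_\P$, and otherwise to $\sigma 1$.'' On the other hand I claim $\ell$ is a left-c.e. real. Writing $\hat\U_i[s]$ for the clopen set enumerated by stage $s$, let $\ell_s$ be the (dyadic) leftmost point of the nonempty clopen set $\cs\setminus\hat\U_i[s]$. As $s$ increases the set shrinks, so the reals $\ell_s$ are nondecreasing and computable; moreover $\ell_s\leq\ell$ for all $s$, and for any dyadic $d<\ell$ the compact set $\{X:X\leq d\}$ lies in $\hat\U_i$ (every real below $\ell$ lies in $\hat\U_i$, since $\ell=\min\P$) and hence in some $\hat\U_i[s]$, forcing $\ell_s>d$. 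Thus $\ell_s\nearrow\ell$, exhibiting $\ell$ as left-c.e.

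Since $\ell\in\P\subseteq\MLR$, the real $\ell$ is a left-c.e. Martin-L\"of random real, so by the Ku\v{c}era--Slaman theorem it is Turing complete, i.e.\ $\emptyset'\leq_T\ell$. Combining the reductions yields $\emptyset'\leq_T\ell\leq_T T_\P\leq_T\emptyset'$, so $T_\P\equiv_T\emptyset'$, and Corollary \ref{cor-2ran} applies. I expect the main obstacle to be the left-c.e. claim for $\ell$, together with correctly invoking that a random left-c.e. real computes $\emptyset'$; the positive-measure and co-c.e. facts are routine.
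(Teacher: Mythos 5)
Your proposal is correct, but it reaches the crucial fact $T_\P\equiv_T\emptyset'$ by a genuinely different route than the paper. Both arguments share the same skeleton: $T_\P$ is co-c.e.\ (so of c.e.\ degree and $\leq_T\emptyset'$), and the leftmost path $\ell$ of $\P$ is a Martin-L\"of random sequence computable from $T_\P$. From there the paper argues: since $T_\P$ computes a random, $T_\P$ has DNC degree (Ku\v{c}era's theorem that every Martin-L\"of random computes a diagonally non-computable function), and then Arslanov's Completeness Criterion -- the only c.e.\ degree containing a DNC function is $\mathbf{0}'$ -- immediately gives $T_\P\equiv_T\emptyset'$. You instead prove the additional fact that $\ell$ is a left-c.e.\ real (via the stage-wise approximations $\ell_s$, the monotonicity of the enumeration of $\hat{\U_i}$, and a compactness argument for convergence), and then invoke the Ku\v{c}era--Slaman/Chaitin circle of results that a random left-c.e.\ real is Turing complete, yielding $\emptyset'\leq_T\ell\leq_T T_\P$. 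Your route costs an extra argument (the left-c.e.-ness of $\ell$, which is the most delicate step and which you handle correctly, including the compactness step showing $\ell_s\nearrow\ell$), but it buys independence from Arslanov's criterion and makes the completeness mechanism very concrete; the paper's route is shorter because the DNC-plus-Arslanov combination lets one bypass any analysis of how $\ell$ is approximated. One small point in your favor: you explicitly verify the positive-measure hypothesis of Corollary \ref{cor-2ran}, which the paper leaves implicit.
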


\begin{proof}
Since $T_\P$ is co-c.e.\ and $T_\P\geq_T X$ for some $X\in\MLR$ (such as the leftmost path of $\P$), $T_\P$ has diagonally non-computable (DNC) degree.  That is, $T_\P$ computes a total function $f$ such that for all $i$, if $\phi_i(i)\halts$ then $f(i)\neq \phi_i(i)$.  By Arslanov's Completeness Criterion, the only c.e.\ DNC degree is the complete one, hence $T_\P\equiv_T\emptyset'$.   
The result thus follows by applying Corollary \ref{cor-2ran}. 
\end{proof}

The branching number function $\theta_\P$  played an important role in Section \ref{sec-Homogeneous} in characterizing the randomness of members of classes which satisfied various forms of homogeneity. 
We now investigate the connection between density in a $\Pi^0_1$ class $\P$ of positive measure and the function $\theta_\P$.  First we prove the following general lemma about lower density and $\theta_\P$ in an arbitrary $\Pi^0_1$ class $\P$.

\begin{lemma}\label{lem-density}
Let $\P$ be a $\Pi^0_1$ class of positive measure and let $X\in\P$.  Suppose that
$\rho(\P\mid X)>2^{-k}$ for some $k\in\omega$. Then $\theta_\P(X\uh n)\geq \dfrac{n}{k}-O(1)$ for all $n\in\omega$.
\end{lemma}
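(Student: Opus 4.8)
The plan is to track how the dyadic density of $\P$ along $X$ evolves from one level to the next, and to show that a long stretch of levels without branching would drive the density above $1$, which is impossible. Write $d_n=\lambda(\P\mid X\uh n)=2^n\lambda(\P\cap\llb X\uh n\rrb)$ for the density at level $n$, and note that $0\le d_n\le 1$ since $\lambda(\P\cap\llb X\uh n\rrb)\le\lambda(\llb X\uh n\rrb)=2^{-n}$. The key observation concerns the behaviour of $d_n$ at a non-branching node: if $X\uh n$ is \emph{not} a branching node of $T_\P$, i.e.\ $(X\uh{n+1})^\curvearrowright\notin T_\P$, then the sibling interval contributes nothing to $\P$, so that $\P\cap\llb X\uh n\rrb=\P\cap\llb X\uh{n+1}\rrb$ and hence $d_{n+1}=2d_n$. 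In other words, every non-branching step exactly doubles the density.

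Next I would use the density hypothesis to bound runs of consecutive non-branching steps. Since $\rho(\P\mid X)=\liminf_n d_n>2^{-k}$, there is some $N$ such that $d_n>2^{-k}$ for all $n\ge N$. Suppose now that steps $j,j+1,\dots,j+\ell-1$ are all non-branching with $j\ge N$; iterating the doubling relation gives $d_{j+\ell}=2^\ell d_j>2^{\ell-k}$, and since $d_{j+\ell}\le 1$ this forces $2^{\ell-k}<1$, i.e.\ $\ell\le k-1$. Thus no run of $k$ consecutive non-branching steps occurs beyond level $N$, so among any $k$ consecutive steps with indices $\ge N$ at least one is a branching step. Recalling that $\theta_\P(X\uh n)$ is exactly the number of branching nodes among $X\uh 0,\dots,X\uh{n-1}$ (the condition $(X\uh m)^\curvearrowright\in T_\P$ signals precisely that $X\uh{m-1}$ branches), a block-counting argument over the indices in $[N,n)$ produces at least $\lfloor(n-N)/k\rfloor\ge (n-N)/k-1$ branching steps. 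Hence $\theta_\P(X\uh n)\ge n/k-N/k-1=n/k-O(1)$, absorbing the fixed constants $N$ and $k$ into the $O(1)$ term (and the cases $n\le N$ are trivially covered).

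The conceptual heart of the argument is the doubling identity at non-branching nodes together with the ceiling $d_n\le 1$; these two facts alone pin down the run-length bound, and everything else is bookkeeping. The one point demanding care is extracting the threshold $N$ correctly from the strict inequality $\liminf_n d_n>2^{-k}$, namely choosing $N$ so that $d_n>2^{-k}$ holds for \emph{all} $n\ge N$ rather than merely along a subsequence, and keeping the inequalities strict where needed so that $2^{\ell-k}<1$ genuinely yields $\ell\le k-1$. I do not expect the positivity of $\lambda(\P)$ to be used beyond fixing the ambient setting; the estimate rests entirely on the local density recursion along $X$.
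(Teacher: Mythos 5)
Your proposal is correct and is essentially the paper's own argument: both extract a threshold $N$ from the liminf hypothesis, observe that a non-branching step exactly doubles the conditional density $\lambda(\P\mid X\uh n)$ (because the sibling interval misses $\P$), derive that $k$ consecutive non-branching steps past $N$ would push the density above $1$, and finish with the same block-counting bound $\theta_\P(X\uh n)\geq n/k-O(1)$. The only cosmetic difference is that you phrase the contradiction as a run-length bound $\ell\leq k-1$, while the paper runs the doubling identity over a block of exactly $k$ levels and contradicts $d\leq 1$ directly.
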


\begin{proof}
Since
\[
\liminf_{n\rightarrow\infty}\lambda(\P\mid X\uh n)>2^{-k},
\]
there is some $N_k$ such that for all $n\geq N_k$
\[
\lambda(\P\mid X\uh n)>2^{-k}.
\]
Suppose now that there is some $\ell\geq N_k$ such that for every $i\in\{1,\dotsc k\}$ we have $X\uh(\ell+i)\in T_\P$ but $(X\uh (\ell+i))^\curvearrowright\notin T_\P$.  For $j\in\omega$, observe that $(X\uh j)^\curvearrowright\notin T_\P$ implies that $\P\cap\llb (X\uh j)^\curvearrowright\rrb=\emptyset$ and thus for every $i\in\{1,\dotsc,k\}$,
\begin{equation}\label{eq7}
\begin{split}
\lambda\bigl(\P\cap\llb X\uh (\ell+i-1)\rrb\bigr)
&=\lambda\bigl(\P\cap\llb X\uh (\ell+i)\rrb\bigr)+\lambda\bigl(\P\cap\llb (X\uh (\ell+i))^\curvearrowright\rrb\bigr)\\
&=\lambda\bigl(\P\cap\llb X\uh (\ell+i)\rrb\bigr).
\end{split}
\end{equation}
It follows from (\ref{eq7}) that for every $i\in\{1,\dotsc,k\}$,
\begin{equation}\label{eq8}
\dfrac{\lambda(\P\cap\llb X\uh (\ell+i)\rrb)}{\lambda(X\uh (\ell+i))}=
\dfrac{\lambda(\P\cap\llb X\uh (\ell+i-1)\rrb)}{\lambda(X\uh (\ell+i))}=
2\cdot\dfrac{\lambda(\P\cap\llb X\uh (\ell+i-1)\rrb)}{\lambda(X\uh (\ell+i-1))}.
\end{equation}
Applying (\ref{eq8}) $k$ times to $\lambda(\P\cap\llb X\uh (\ell+k)\rrb)/\lambda(X\uh (\ell+k))$ yields
\begin{equation}\label{eq9}
\dfrac{\lambda(\P\cap\llb X\uh (\ell+k)\rrb)}{\lambda(X\uh (\ell+k))}=
2^k\cdot\dfrac{\lambda(\P\cap\llb X\uh \ell\rrb)}{\lambda(X\uh \ell)}.
\end{equation}
Next, by choice of $\ell$ we have
\begin{equation}\label{eq10}
2^{-k}<\dfrac{\lambda(\P\cap\llb X\uh \ell\rrb)}{\lambda(X\uh \ell)}\leq 1.
\end{equation}
Combining (\ref{eq9}) and (\ref{eq10}) yields
\[
1<\dfrac{\lambda(\P\cap\llb X\uh (\ell+k)\rrb)}{\lambda(X\uh (\ell+k))}\leq 2^k,
\]
which is clearly impossible.  Thus, for each $\ell\geq N_k$ and each block of $k$ values $\ell+1,\dotsc,\ell+k$, there must be some $i\in\{1,\dotsc,k\}$ such that both $X\uh(\ell+i)\in T_\P$ and $(X\uh (\ell+i))^\curvearrowright\in T_\P$.  Thus there is some sufficiently large $N$ such that $\theta_\P(X\uh n)\geq \dfrac{n}{k}-N$ for every $n\geq N$.
\end{proof}

\begin{proposition}\label{prop-pos1}
Let $\P$ be a $\Pi^0_1$ class of positive measure.  If $X\in\P\cap\MLR^{T_\P}$, then $X$ is a randomly produced path through $T_\P$.  In particular, if $T_\P\equiv_T\emptyset'$, then every 2-random $X\in\P$ is a randomly produced path through $T_\P$.
\end{proposition}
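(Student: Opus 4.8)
The plan is to reduce everything to the initial-segment complexity characterization of randomly produced paths established in Theorem \ref{thm-rpp-isc}, rather than to exhibit a witnessing oracle $R\in\MLR^{T_\P}$ directly. Recall that by that theorem, for $X\in\P$ it suffices to verify the single inequality $K^{T_\P}(X\uh n)\geq\theta_\P(X\uh n)-O(1)$ in order to conclude that $X$ is a randomly produced path through $T_\P$. So the whole argument will amount to producing a lower bound on $K^{T_\P}(X\uh n)$ that dominates the threshold $\theta_\P(X\uh n)$.

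First I would unpack the hypothesis $X\in\MLR^{T_\P}$. Since $\MLR^{T_\P}$ is Lebesgue-Martin-L\"of randomness relative to $T_\P$, the relativized Levin--Schnorr theorem (Theorem \ref{thm-levinschnorr}) yields a constant $c$ with $K^{T_\P}(X\uh n)\geq-\log\lambda(X\uh n)-c=n-c$ for all $n$. Next I would record the elementary observation that $\theta_\P(\sigma)$ counts a subset of the indices $\{1,\dotsc,|\sigma|\}$, so $\theta_\P(X\uh n)\leq n$ for every $n$. Chaining these, $K^{T_\P}(X\uh n)\geq n-c\geq\theta_\P(X\uh n)-c$, and Theorem \ref{thm-rpp-isc} immediately gives that $X$ is a randomly produced path. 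For the second assertion, the hypothesis $T_\P\equiv_T\emptyset'$ gives $\MLR^{T_\P}=\MLR^{\emptyset'}$, so any $2$-random $X\in\P$ lies in $\P\cap\MLR^{T_\P}$ and the first part applies.

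I do not expect a genuine obstacle: the crux is merely the recognition that $\theta_\P$ never exceeds the length, so the strongest possible complexity bound $n-O(1)$ enjoyed by a $T_\P$-random sequence automatically clears the $\theta_\P$-threshold. Positive measure is used only to guarantee that $\P\cap\MLR^{T_\P}$ is nonempty, for if $\lambda(\P)=0$ then $\P$ is covered by the $T_\P$-Martin-L\"of test $\{\llb T_\P\uh n\rrb\}$ and the implication is vacuous. The one point requiring care is to resist proving the converse, since being a randomly produced path does \emph{not} entail $T_\P$-randomness; this is exactly why the surrounding discussion notes that an extra hypothesis on $X$ is needed to fold ``randomly produced path'' into the equivalences of Theorem \ref{thm-pos}. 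An alternative route, should one wish to avoid the complexity characterization, is to argue by measure domination: since $\mu_\P(\sigma)=2^{-\theta_\P(\sigma)}\geq 2^{-|\sigma|}=\lambda(\sigma)$ for each $\sigma\in T_\P$, any $\mu_\P$-Martin-L\"of test relative to $T_\P$ (which may be taken to use only strings of $T_\P$, as $\mu_\P$ is supported on $\P$) is also a $\lambda$-Martin-L\"of test relative to $T_\P$; hence $X\in\MLR^{T_\P}$ passes it, giving $X\in\MLR^{T_\P}_{\mu_\P}$, which by Proposition \ref{prop-rpp} and Theorem \ref{thm-2equiv} means $X$ is a randomly produced path.
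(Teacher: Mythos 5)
Your proof is correct and is essentially identical to the paper's: both arguments combine the Levin--Schnorr lower bound $K^{T_\P}(X\uh n)\geq n-O(1)$ for $X\in\MLR^{T_\P}$ with the trivial bound $\theta_\P(X\uh n)\leq n$, and then invoke the complexity characterization of randomly produced paths (Theorem \ref{thm-rpp-isc}). Your additional remarks (the explicit handling of the $T_\P\equiv_T\emptyset'$ case and the measure-domination alternative) are sound but not needed beyond what the paper does.
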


\begin{proof}
Since $\theta_\P(\sigma)\leq |\sigma|$ for every $\sigma\in\str$ and $X\in\MLR^{T_\P}$, we have
\[
K^{T_\P}(X\uh n)\geq n-O(1)\geq \theta_\P(X\uh n)-O(1).
\]
Thus $X$ is a randomly produced path through $T_\P$.
\end{proof}

\begin{proposition}\label{prop-pos2}
Let $\P$ be a $\Pi^0_1$ class of positive measure. If $X\in\P$ is a randomly produced path through $T_\P$ of lower dyadic density in $\P$ strictly greater than 1/2, then $X\in\MLR^{T_\P}$. In particular, if $T_\P\equiv_T\emptyset'$, then every randomly produced path $X$ through $\P$ of lower dyadic density in $\P$ strictly greater than 1/2 is 2-random.
\end{proposition}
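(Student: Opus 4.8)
The plan is to route everything through the initial-segment-complexity characterization of randomly produced paths. By Theorem~\ref{thm-rpp-isc}, since $X$ is a randomly produced path through $T_\P$ we have $K^{T_\P}(X\uh n)\geq\theta_\P(X\uh n)-O(1)$ for all $n$, while by the relativized Levin--Schnorr theorem (with $-\log\lambda(X\uh n)=n$) the desired conclusion $X\in\MLR^{T_\P}$ is equivalent to $K^{T_\P}(X\uh n)\geq n-O(1)$. Since $\theta_\P(\sigma)\leq|\sigma|$ always holds, it therefore suffices to prove the reverse bound $\theta_\P(X\uh n)\geq n-O(1)$; that is, I would show that from some level onward \emph{every} node along $X$ is a branching node of $T_\P$. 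This is precisely where the density hypothesis enters.

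The key is a density-doubling dichotomy. Write $d_n=\lambda(\P\mid X\uh n)=\lambda(\P\cap\llb X\uh n\rrb)/\lambda(X\uh n)$ for the local density of $\P$ along $X$, and note that $d_n\leq 1$ for every $n$. If level $m$ is non-branching along $X$, i.e.\ $(X\uh(m+1))^\curvearrowright\notin T_\P$, then $\P\cap\llb(X\uh(m+1))^\curvearrowright\rrb=\emptyset$, so all of the $\P$-mass in $\llb X\uh m\rrb$ survives into $\llb X\uh(m+1)\rrb$, giving $d_{m+1}=2d_m$. Because $X$ has lower dyadic density in $\P$ strictly greater than $1/2$, we have $\rho(\P\mid X)>1/2$, hence there is some $N$ with $d_m>1/2$ for all $m\geq N$. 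A non-branching level $m\geq N$ would then force $d_{m+1}=2d_m>1$, contradicting $d_{m+1}\leq 1$. Thus every level $m\geq N$ along $X$ is a branching node.

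Consequently $\theta_\P(X\uh n)\geq n-N$ for all $n$, so $\theta_\P(X\uh n)\geq n-O(1)$, and combining this with the bound supplied by Theorem~\ref{thm-rpp-isc} yields $K^{T_\P}(X\uh n)\geq n-O(1)$. By the relativized Levin--Schnorr theorem this gives $X\in\MLR^{T_\P}$, establishing the first claim. For the ``in particular'' clause, when $T_\P\equiv_T\emptyset'$ we have $\MLR^{T_\P}=\MLR^{\emptyset'}$, so such an $X$ is exactly a $2$-random sequence.

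The argument is short, and the single point that genuinely requires care is the \emph{strictness} of the $1/2$ threshold. The reasoning collapses if the lower dyadic density equals $1/2$: at a level where $d_m=1/2$ a non-branching step is permissible, since it produces $d_{m+1}=1$ rather than a contradiction, and one can no longer conclude that branching is eventually forced. The hypothesis of strict inequality is exactly what excludes this degenerate behavior, and the only quantifier manipulation needed is the translation of $\liminf_n d_n>1/2$ into ``$d_m>1/2$ for all sufficiently large $m$.''
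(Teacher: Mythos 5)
Your proof is correct and takes essentially the same route as the paper's: the paper likewise combines Theorem \ref{thm-rpp-isc} with the bound $\theta_\P(X\uh n)\geq n-O(1)$ and concludes via the relativized Levin--Schnorr theorem, obtaining that bound from Lemma \ref{lem-density} with $k=1$, whose proof is exactly your density-doubling argument at non-branching levels. The only difference is cosmetic: you inline the $k=1$ case of that lemma rather than citing it.
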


\begin{proof}
Since $X$ has has lower dyadic density strictly greater than 1/2 in $\P$, it follows from Lemma \ref{lem-density} that $\theta_\P(X\uh n)\geq n-O(1)$ for every $n\in\omega$.  Then
\[
K^{T_\P}(X\uh n)\geq \theta_\P(X\uh n)\geq n-O(1).
\]
Thus $X\in\MLR^{T_\P}$.
\end{proof}

Note that the density condition is necessary:  Let $\P=\cs$ and $\Q$ be a $\Pi^0_1$ class containing no 1-randoms (such as $\{0^\omega\}$ or $\{00,11\}^\omega$).  Then not every randomly produced path through $T_{\P\oplus\Q}$ is in $\MLR^{T_\P}$ (i.e., those randomly produced paths extending the string 1).

By Theorem \ref{thm-pos} and Propositions \ref{prop-pos1} and \ref{prop-pos2} we have the following.

\begin{corollary}
Let $\P$ be a $\Pi^0_1$ class of positive measure.  For any $X$ with lower dyadic density in $\P$ strictly greater than 1/2, the following are equivalent:
\begin{itemize}
\item[(i)]  $X\in\MLR^{T_\P}$;
\item[(ii)] $K^{T_\P}(X\uh n)\geq -\log\lambda_\P(X\uh n)-0(1)$ for all $n\in\omega$;
\item[(iii)] $K^{T_\P}(X\uh n)\geq \log \#T_\P\uh n-O(1)$ for all $n\in\omega$;
\item[(iv)] $X$ is a randomly produced path through $T_\P$.
\end{itemize}
\end{corollary}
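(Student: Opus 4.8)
The plan is to assemble the equivalence directly from the three results cited immediately before the statement, tracking carefully which density hypothesis each one requires. First I would observe that ``lower dyadic density in $\P$ strictly greater than $1/2$'' is strictly stronger than ``positive lower dyadic density,'' so the hypotheses of Theorem \ref{thm-pos} are met. That theorem then delivers the equivalence of conditions (i), (ii), and (iii) with no further work; this accounts for the bulk of the statement.

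It remains to fold condition (iv) into this cluster, which I would do by proving the two implications (i) $\Rightarrow$ (iv) and (iv) $\Rightarrow$ (i) separately. For the forward direction, note that since $X$ has positive lower dyadic density in $\P$ we have $X\in\P$, so if (i) holds then $X\in\P\cap\MLR^{T_\P}$; Proposition \ref{prop-pos1} then yields that $X$ is a randomly produced path through $T_\P$, which is (iv). For the reverse direction I would invoke Proposition \ref{prop-pos2}: if $X$ is a randomly produced path through $T_\P$ and has lower dyadic density in $\P$ strictly greater than $1/2$, then $X\in\MLR^{T_\P}$, which is (i). Combining these with the equivalence of (i)--(iii) closes the cycle and gives the full result.

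The only point requiring attention---and the sole place the argument is not purely mechanical---is recognizing that the strengthened density hypothesis (strictly greater than $1/2$, rather than merely positive) is precisely what Proposition \ref{prop-pos2} consumes in establishing (iv) $\Rightarrow$ (i). As the remark following that proposition illustrates (via $\P\oplus\Q$ with $\Q$ containing no $1$-randoms), positive lower dyadic density alone does not suffice to recover (i) from (iv), so the threshold $1/2$ cannot be relaxed in that step. Since all the substantive work has already been carried out in Theorem \ref{thm-pos} and Propositions \ref{prop-pos1}--\ref{prop-pos2}, I anticipate no genuine obstacle beyond this bookkeeping of hypotheses.
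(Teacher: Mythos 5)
Your proposal is correct and follows the paper's own route exactly: the paper derives this corollary by combining Theorem \ref{thm-pos} (for the equivalence of (i)--(iii) under positive lower dyadic density) with Proposition \ref{prop-pos1} for (i) $\Rightarrow$ (iv) and Proposition \ref{prop-pos2} for (iv) $\Rightarrow$ (i), just as you do. Your added bookkeeping---noting that density greater than $1/2$ implies positive density, that the density hypothesis places $X$ in $\P$ so Proposition \ref{prop-pos1} applies, and that the $1/2$ threshold is consumed only in the (iv) $\Rightarrow$ (i) step---is exactly the content the paper leaves implicit.
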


One interesting consequence of this result is that if $\P=\cs\setminus\hat{\U_i}$, where $(\hat{\U_i})_{i\in\omega}$ is a universal Martin-L\"of test, then if $X$ has lower dyadic density in $\P$ strictly greater than 1/2 and is not 2-random, then $X$ is not a random member of $\P$ (according to any of our definitions).  As shown by Bienvenu et al. \cite{BGKNT16}, there are sequences $X$ that have lower density 1 in every effectively closed class containing $X$ (which, by the result of Khan and Miller \cite{K16} referenced above, also holds of lower dyadic density) but which are not 2-random (they exhibit a low Oberwolfach random sequence, where Oberwolfach randomness is a notion of randomness that guarantees lower density 1).  However, the following question remains open.

\begin{question}
If $\P$ is a $\Pi^0_1$ class of positive measure, are there randomly produced paths through $\P$ with lower dyadic density at most 1/2?  In particular, if $T_\P\equiv\emptyset'$, is $X\in\P$ a randomly produced path through $T_\P$ if and only if $X$ is 2-random? 
\end{question}

\section{Conclusion and future work}\label{sec-future}

In this paper, we define some notions of randomness for members of a given closed set $\P$. $\P$ may be defined as the set of infinite paths through a tree $T_{\P}$ with no dead ends.  We define a natural mapping $\Psi_{\P}$ from $\cs$ into $\P$ which uses an input $Y$ to determine which branch to follow in $T_{\P}$, and say that $X \in \P$ is \emph{randomly produced path through $T_\P$} if $X = \Phi_{\P}(R)$ for some Martin-L\"of random sequence $R$.  The map $\Psi$ induces a measure on $\P$ and we show that $X$ is a randomly produced member of $\P$ if and only if it is $T_{\P}$ random with respect to this measure.  We give an alternative mapping $\Phi$ and show that randomness with respect to the measure induced by $\Phi$  is equivalent to being randomly produced.  The branching number $\theta_{\P}(\sigma)$ is defined to be the number of initial segments $\tau$ of $\sigma$ such that both $\sigma \fr 0$ and $\sigma \fr 1$ are in $T_{\P}$. 
It is shown that $X \in \P$ is a randomly produced element if and only if  $K^{T_\P}(X\uh n)\geq \theta_\P(X\uh n)-O(1)$, which is a notion of incompressibility. 

We define a  \emph{limiting relative measure}  $\lambda_\P(\sigma)=\lim_{n\rightarrow\infty}\frac{\lambda(\llb\sigma\rrb\cap \llb T_\P\uh n\rrb)}{\lambda(T_\P\uh n)}$.  This will be the usual relative Lebesgue measure if $\lambda(\P) > 0$ but otherwise may not exist. We say that $X \in \P$ is \emph{globally random} if it is $T_{\P}$-Martin-L\"of random with respect to the measure $\lambda_P$.  We say that $X \in \P$ is \emph{$T_{\P}$-incompressible} if  $K^{T_\P}(X\uh n)\geq log \# T_{\P} \uh n)-O(1)$.

Several notions of homogeneity for $\Pi^0_1$ classes are developed in part to find conditions under which $\lambda_P$ is defined and to characterize randomness with respect to $\lambda_P$. 
The closed set $\P$ is said to be \emph{s.s.-homogeneous} if for every $\sigma, \tau \in T_{\P}$ of the same length, $\sigma \fr i \in T_{\P} \iff \tau \fr \i \in T_{\P}$ for $i \in \{0,1\}$.  We show that if $\P$ is s.s.-homogeneous, then 
$\lambda_P$ is defined and for all $X \in \P$, $X$ is a randomly produced path if and only if $X$ is globally random in $\P$ if and only if $X$ is $T_{\P}$ incompressible. This result is extended to \emph{$n$-homogeneous} classes $\P$, where all nodes of length $nk$ have the same extensions of length $n(k+1)$ in $T_{\P}$. We also say that $\P$ is \emph{weakly $n$-homogeneous} if each node in $T_\P$ of length $nk$ has the same number of extensions of length $n(k+1)$.

Van Lambalgen develop a notion of multiplicative homogeneity for a $\Pi^0_1$ class $\P$, which concerns the relative number of branches in any $\Pi^0_1$ subclass of $\P$. We show that any weakly $n$-homogeneous class is VL-homogeneous; we call this notion \emph{VL-homogeneity}.  We show that if $\lambda_P$ is defined for a VL-homogeneous $\Pi^0_1$ class, then 
$\Bigl|-\log\lambda_\P(\sigma)-\log\#T_\P\uh|\sigma|\Bigr|\leq O(1),$ from which it follows that $X \in \P$ is globally random in $\P$ if and only if $X$ is $T_\P$-incompressible. 

For a final notion of homegeneity, we say that $\P$ is \emph{additively homogeneous} if there is a constant $c$ such that for all $n$ and all $\sigma,\tau \in T_{\P}$ of length $n$, $|\theta_P(\sigma) - \theta_P(\tau)| \leq c$.  We show that a member $X$ of an additively homogeneous $\Pi^0_1$ class $\P$ is a randomly produced path through $T_\P$ if and only if it is $T_{\P}$-incompressible. We further show that every additively homogeneous $\Pi^0_1$ class is VL-homogeneous.  From this it follows that if $\lambda_\P$, then each of the main notions of randomness coincide in $\P$.

Lastly, we examine the random members of $\Pi^0_1$ classes of positive measure, isolating conditions under which all of the main notions of randomness coincide for such classes.

In future work we plan to investigate the random members of deep $\Pi^0_1$ classes \cite{BP16} (such as the collection of completions of Peano arithmetic and the collection of $\{0,1\}$-valued diagonally non-computable functions, which is an s.s.-homogeneous class), thin $\Pi^0_1$ classes, computably perfect $\Pi^0_1$ classes, and ranked classes.

\bibliographystyle{plain}
\bibliography{random}

\end{document}